\pgfplotsset{compat=1.17}
\renewcommand{\bibnamedash}{\leavevmode\raise3pt\hbox to3em{\hrulefill}\space}
\date{Novembre 2021}
\title{Joinings classification and applications}
 \author{Menny Aka}
\address{ ETH Zürich - DMATH \\ 
Rämistrasse 101 \\ 8092 Zürich, Switzerland
}
\email{menashea@math.ethz.ch}
\newcommand{\Ga}{\Gamma}
\newcommand{\cA}{\mathcal{A}}
\newcommand{\cB}{\mathcal{B}}
\newcommand{\cC}{\mathcal{C}}
\newcommand{\cG}{\mathcal{G}}
\newcommand{\cH}{\mathcal{H}}
\newcommand{\cJ}{\mathcal{J}}
\newcommand{\cL}{\mathcal{L}}
\newcommand{\cO}{\mathcal{O}}
\newcommand{\cS}{\mathcal{S}}
\newcommand{\bA}{\mathbb{A}}
\newcommand{\bC}{\mathbb{C}}
\newcommand{\bG}{\mathbb{G}}
\newcommand{\bR}{\mathbb{R}}
\newcommand{\bZ}{\mathbb{Z}}
\newcommand{\bQ}{\mathbb{Q}}
\newcommand{\bF}{\mathbb{F}}
\newcommand{\bN}{\mathbb{N}}
\newcommand{\bH}{\mathbb{H}}
\newcommand{\bT}{\mathbb{T}}
\newcommand{\bS}{\mathbb{S}}
\newcommand{\goa}{\mathfrak{a}}
\newcommand{\gog}{\mathfrak{g}}
\newcommand{\goh}{\mathfrak{h}}
\newcommand{\goi}{\mathfrak{i}}
\newcommand{\gou}{\mathfrak{u}}
\newcommand{\gov}{\mathfrak{v}}
\newcommand{\gosl}{\mathfrak{sl}}
\newcommand{\rmx}{\mathrm{x}}
\newcommand{\SL}{\operatorname{SL}}
\newcommand{\SO}{\operatorname{SO}}
\newcommand{\PGL}{\operatorname{PGL}}
\newcommand{\GL}{\operatorname{GL}}
\newcommand{\Lie}{\operatorname{Lie}}
\newcommand{\Bin}{\operatorname{Bin}}
\newcommand{\red}{\operatorname{red}}
\newcommand{\CM}{\operatorname{CM}}
\newcommand{\Nr}{\operatorname{Nr}}
\newcommand{\gen}{\operatorname{gen}}
\newcommand{\Res}{\operatorname{Res}}
\newcommand{\Pic}{\operatorname{Pic}}
\newcommand{\Ad}{\operatorname{Ad}}
\newcommand{\norm}[1]{\left\|#1\right\|}
\newcommand{\set}[1]{\left\{#1\right\}}
\newcommand{\pa}[1]{\left(#1\right)}
\newcommand{\av}[1]{\left|#1\right|}
\newcommand{\vol}{\operatorname{vol}}
\newcommand{\supp}{\operatorname{supp}}
\newcommand{\ra}{\rightarrow}
\newcommand{\onto}{\xymatrix{\ar@{>>}[r]&}}
\newcommand{\da}[4]{\xymatrix{#1 \ar@<.5ex>[r]^{#2} \ar@<-.5ex>[r]_{#3} & #4}}
\newcommand{\ExampleOne}{Example \ref{exa:SL2Squared}\eqref{exa:SL2IdenticalWeights}}
\newcommand{\ExampleTwo}{Example \ref{exa:SL2Squared}\eqref{exa:SL2DifferentSpeeds}}
\newcommand{\WeightSet}{\set{\pm\alpha,\pm\beta}}
\newcommand{\LWM}[2]{\mu_{#1}^{[{#2}]}}
\newcommand{\UAlpha}{U^{[\alpha]}}
\newcommand{\LocalInvGp}[2]{I_{#1}^{[{#2}]}}
\newcommand{\ClassGp}[1]{\Pic(\cO_{#1})}
\begin{document}

\maketitle
\stepcounter{section}
\section*{Introduction}
Fix a group $A$ and consider a set of measure-preserving actions of $A$ on $\mathrm{X_i}=\pa{X_i,\cB_i,\mu_i}$, $i=1,\dots,r$,  where $X_i$ is a Borel probability space with a measure $\mu_i$ and a $\sigma$-algebra~$\cB_i$. Consider the joint action (also called the diagonal action) of~$A$ on 
$$
\mathrm{X}=\pa{X_1\times\dots\times X_r,\cB_1\times\dots\times\cB_r}
$$ 
given by $a.(x_1,\dots,x_r)=(a.x_1,\dots,a.x_r).$ A ($r$-fold) \emph{joining} of the systems $\set{\mathrm{X_i}}_{i=1}^r$ is an $A$-invariant probability measure $\mu$ on $X$ with $\pa{\pi_i}_{*}\mu=\mu_i$ for $i=1,\dots,r$ where $\pi_i\colon X\to X_i$ is the natural projection map. 

There always exists at least one joining, namely the \emph{trivial joining}, which is the product measure $\mu_1\otimes\dots\otimes\mu_r$. When this is the only possible joining of the systems $\set{\mathrm{X_i}}_{i=1}^r$ one says that these systems are \emph{disjoint}. The systematic study of joinings stems from Furstenberg's seminal  paper \parencite{FurstenbergDisjoint}. Furstenberg marked an analogy between joinings and the arithmetic of integers: saying that two measure-preserving systems are disjoint is analogous to saying that their least common multiple is their product. The analogy works in one direction; measure-preserving systems admitting a non-trivial common factor are never disjoint: recall that a \emph{factor} of a measure-preserving system $\mathrm{X}=(X,\cB,\mu,A)$ is a measure-preserving system $\mathrm{Y}=(Y,\cC,\nu,A)$ and a measure-preserving map $\phi\colon X\to Y$ which intertwines the action of $A$, that is, for all $a\in A$ we have $a.\phi(x)=\phi(a.x)$ for $\mu$-almost everywhere. Like integers, any measure-preserving system has itself and the trivial system (one-point system) as factors. Moreover, like integers, as stated above,  if two measure-preserving systems have a common factor, they have a non-trivial joining, called the relatively independent joining over a common factor (see e.g.,~\cite[\S 6.5]{EWBOOK}). \textcite{FurstenbergDisjoint} asked if this analogy also works in the other direction: if two systems do not have any common factor, must they be disjoint? \textcite{RudolphCounterExample} answered negatively, providing the first counterexamples. Joinings are nonetheless a strong tool in ergodic theory, as exemplified by  \textcite{GlasnerJoiningsBook} which gives a complete treatment of ergodic theory via joinings. The broad applicability of the classification of possible joinings of certain systems was already visible in the work of \textcite{FurstenbergDisjoint}, where he solves a question in Diophantine approximation using joinings. We refer the reader also to the recent survey of \textcite{delaRue2020} about the broad use of  joinings in ergodic theory.

Roughly said, the study of joinings is the study of all possible ways two systems (or $r$ systems) can be embedded as factors of another system, which is in turn spanned by them. When two systems are \emph{not} disjoint, this is a sign that there is strong relation between them. The main topic of this survey is a very good example of this principle.  This is a survey of the work of Einsiedler and Lindenstrauss on joinings of higher-rank torus actions on $S$-arithmetic homogeneous spaces \parencite{EL_Joinings2019}, which extends their previous paper \parencite{EL_Joinings2007}. They consider torus actions on two (or $r$) homogeneous spaces which are quotients of $S$-arithmetic points of perfect algebraic groups, equipped with the uniform Haar probability measure on each quotient. They show in particular that if such systems are not disjoint, there must be a strong \emph{algebraic} relation between the corresponding perfect algebraic groups, exemplifying the principle stated above. This may remind the reader of the folklore Goursat's Lemma from group theory; while the latter is a natural structural theorem about subgroups of a product, the joining theorem of Einsiedler and Lindenstrauss is a striking instance of measure rigidity, where the existence of non-trivial joinings in this setting can only be due to a strong algebraic relation.

The main result of   \textcite[Theorem 1.7]{EL_Joinings2019} classifies joinings on higher-rank torus actions on a product of two (or $r$) homogenous spaces of the form 
$$
\Ga_1\backslash\bG_1(\bQ_S)\times \Ga_2\backslash \bG_2(\bQ_S) 
$$
as we now state after recalling the necessary definitions. The measure spaces we consider  are $S$-arithmetic homogeneous quotients of perfect groups. More precisely, let $\bG$ be a perfect Zariski-connected linear algebraic group defined over $\bQ$ and let $S$ be a finite set of places of~$\bQ$. Let $\bQ_S$ denote $\prod_{s\in S}\bQ_s$ (with $\bQ_\infty=\bR)$. An $S$-arithmetic quotient is a quotient space of the form $\Gamma\backslash G$ with $G$ being a finite-index subgroup of $\bG(\bQ_S)$ and $\Ga$ is an irreducible arithmetic lattice commensurable to $\bG(\cO_S)$. Here, $\cO_S$ denotes the ring of $S$-adic integers. Such an $S$-arithmetic quotient is said to be \emph{saturated by unipotent} if the group generated by all unipotent elements of $G$ acts ergodically on $\Ga\backslash G$. For example, for $\bG=\SL_n$ (or more generally simply-connected algebraic groups) the quotient $\Ga\backslash \bG(\bQ_S)$ is saturated by unipotents.

A probability measure $\mu$ on an $S$-arithmetic quotient $\Ga\backslash G$ is called \emph{algebraic over $\bQ$} if there exists an algebraic group $\bH$ defined over $\bQ$ and a finite-index subgroup $H<\bH(\bQ_S)$ such that $\mu=m_{\Ga Hg}$ where $g\in G$ and $m_{\Ga Hg}$ denotes the normalized Haar measure on a single (necessarily closed, by the finiteness of $\mu$) orbit - see \S \ref{subsec:settings} for a detailed definition.  

The joinings we aim to classify are joinings of $S$-arithmetic quotients $X_i=\Ga_i\backslash G_i$ which are saturated by unipotents, equipped with Haar probability measures $m_{X_i}=m_{\Ga \backslash G}$, and a torus action which we now define. Following the notation of \textcite{EL_Joinings2019} we say that a subgroup $A<G$ is of class-$\cA'$ if it is simultaneously diagonalizable and the projection of $a\in A$ to $\bG(\bQ_s)$  for any $s\in S$ satisfies the following: for $s=\infty$ it has only positive real eigenvalues, and for $s$ equal to a finite prime $p$, we assume that all the eigenvalues are powers of $\theta_p$ for some $\theta_p\in \bQ_p^\times$ with $\av{\theta_p}_p\neq 1$ chosen independently of $a\in A$. A homomorphism $\phi\colon\bZ^d\to G$ is said to be of class $\cA'$ if it is proper and $\phi(\bZ^d)$ is of class-$\cA'$. The term \emph{higher-rank torus action} refers to such a homomorphism with $d\geq 2$. We are ready to state the main theorem of \textcite[Theorem 1.7]{EL_Joinings2019}:

\begin{theo}[Einsiedler--Lindenstrauss, 2019]\label{thm:mainThm}
Let $r,d\geq 2$ and let $\bG_1,\dots, \bG_r$ be perfect algebraic groups defined over $\bQ$, $\bG= 
\prod \bG_i$, and $S$ be a finite set of places of~$\bQ$. Let $X_i= \Gamma_i\backslash G_i$ be  $S$-arithmetic quotients for $G_i< \bG_i(\bQ_S)$ which are  saturated by unipotents and set $G=\prod_{i=1}^r G_i$ and $X =\prod_{i=1}^r X_i$. Let $\phi_i\colon\bZ^d\to G_i$ be homomorphisms such that $\phi=(\phi_1,\dots,\phi_r)\colon \bZ^d \ra G$ is of class-$\cA'$, and such that the projection of $\phi_i$ to every $\bQ$-almost simple factor of $\bG_i(\bQ_S)$ is proper. Let $A=\phi(\bZ^d)$ and suppose $\mu$ is an $A$-invariant and ergodic joining of the actions of $A_i = \phi_i(\bZ^d)$ on $X_i$ equipped with the Haar measure $m_{X_i}$. Then, $\mu$ is an algebraic measure defined over $\bQ$.
\end{theo}

This theorem exemplifies the above principle concerning disjointness: let $\bH<\bG$ be the group showing the algebraicity of $\mu$. If $\bH=\bG$ then $\mu$ is the trivial joining. Otherwise, $\bH$ arises from a very strong relation between the algebraic groups $\bG_i$. Indeed, certain of their $\bQ$-simple factors need to be isogenous \emph{over $\bQ$}. In particular, if $\bG_i$ are pairwise non-$\bQ$-isogenous almost simple groups, any joining must be the trivial one. This situation strongly echoes Goursat's Lemma from group theory.

Taking again the broader viewpoint of measure rigidity for torus action (or $\bZ^d$-actions) on homogeneous spaces, Theorem \ref{thm:mainThm} is the most complete result in this context. Such rigidity results are currently only possible under a positive entropy assumption. In our context, the positive entropy assumption is hidden in the assumption that we join homogeneous spaces equipped with the Haar probably measure on each quotient (we give more details below). Moreover, the assumption that the groups are perfect is essential: considering more general groups in both factors would allow to recast the classification of $\bZ^k$-actions on solenoids (including the zero entropy case - a notoriously difficult problem), as a classification problem of joinings.

Theorem \ref{thm:mainThm} is already interesting when $r=2$ and $\bG_1=\bG_2=\SL_n$   for $n\geq 3$  and $d\geq 2$, or for $\bG_1=\bG_2=\SL_2\times \SL_2$ and $d=2$. While reading this survey, the reader is advised to concentrate on these cases. Indeed, the  techniques used and the main steps of the proofs of \textcite{EL_Joinings2007} and \textcite{EL_Joinings2019} are already visible when one considers the case where~$\bG_1$ and~$\bG_2$ are equal to $\SL_n$ for $n\geq 3$ or to $\SL_2\times\SL_2$, and where $S=\set{\infty}$, that is, where we consider real Lie groups. Therefore, apart from describing the main result of \textcite{EL_Joinings2019} in this introduction, we will reduce this survey to these cases.

To end this introduction we present a few images of the following arithmetic application \parencite{AES2016} which appeared at the same time as \parencite{EL_Joinings2019}. We discuss further applications in \S \ref{sec:Applicaitions}.

For $D\in \bN$ write 
$$
\bS^2(D)=\set{\pa{x,y,z}\in \bZ^3: x^2+y^2+z^2=D,\gcd\pa{x,y,z}=1}.
$$
By Legendre and Gauss we have $\bS^2(D)\neq\emptyset$ if and only if $D\neq 0,4,7\mod 8$. Consider 
\begin{equation}\label{eq:EquiOnSpheres}
P_D:=\frac{1}{\sqrt{D}}\cdot \bS^2(D)\subset \bS^2:=\set{\pa{x,y,z}\in \bR^3: x^2+y^2+z^2=1}.    
\end{equation}
By a celebrated theorem of \textcite{Duke88}, based on a breakthrough of  \textcite{IwaniecBreak},   $P_D$ equidistribute on $\bS^2$ when $D\to \infty$ along $D\neq 0,4,7\mod 8$. That is, the following weak-* convergence  
$$
\mu_D:=\frac{1}{\av{\bS^2(D)}}\sum_{v\in \bS^2(D)}\delta_{\frac{v}{\sqrt{D}}}\longrightarrow m_{\bS^2}
$$
holds, where $m_{\bS^2}$ is the uniform (cone) measure on $\bS^2$.

We wish to join this equidistribution problem with another equidistribution problem in a natural way. For each $v\in \bS^2(D)$ we consider the two-dimensional lattice $\Lambda_v:=v^\perp\cap\bZ^3$ which we can consider up to rotation as lying in  a fixed plane of $\bQ^3$. We denote it by $[\Lambda_v]$ and call it \emph{the (shape of the) orthogonal lattice of $v$}.  The set $Q_D:=\set{[\Lambda_v]:v\in \bS^2(D)}$ can be considered as a subset of the modular surface $X_2:=\SL_2(\bZ)\backslash\bH$ which parametrizes the space of two-dimensional lattices up to rotation, and carries a natural invariant probability measure $m_{X_2}$. A careful analysis (see e.g., \cite[\S 5.2]{EMVpoints}) shows that the normalized counting measure on $Q_D$ also equidistributes as $D\to \infty$ to $m_{X_2}$, by a variant of Duke's Theorem. This construction yields the following natural problem: does the normalized counting measure on 
\begin{equation}\label{JdFor1in3}
J_D:=\set{\pa{v,[\Lambda_v]}: v\in \bS^2(D)}
\end{equation}
equidistribute to the product measure $m_{\bS^2}\otimes m_{X_2}$ when $D\to\infty$ with $D\neq 0,4,7\mod 8$?

We conjecture that it does (mainly because we don't see a reason why it shouldn't). Here is some “visible” evidence: for the $D$'s below, we divide the modular surface $X_2$ using the height function into two (resp.~three) equal $m_{X_2}$-measure regions and call lattices in each region non-stretched/stretched (resp.~non-stretched/mildly stretched/super-stretched) and color each point on $\frac{1}{\sqrt{D}}\cdot \bS^2(D)$ with a different color according to the type of its orthogonal lattice. In figures \ref{two_colors} and \ref{three_colors} below, one can see the distribution of the corresponding points together with the number of points of each type for $D=101,8011,104851,14500001$.

\begin{figure}[ht]
\centering
\includegraphics[width=0.9\textwidth]{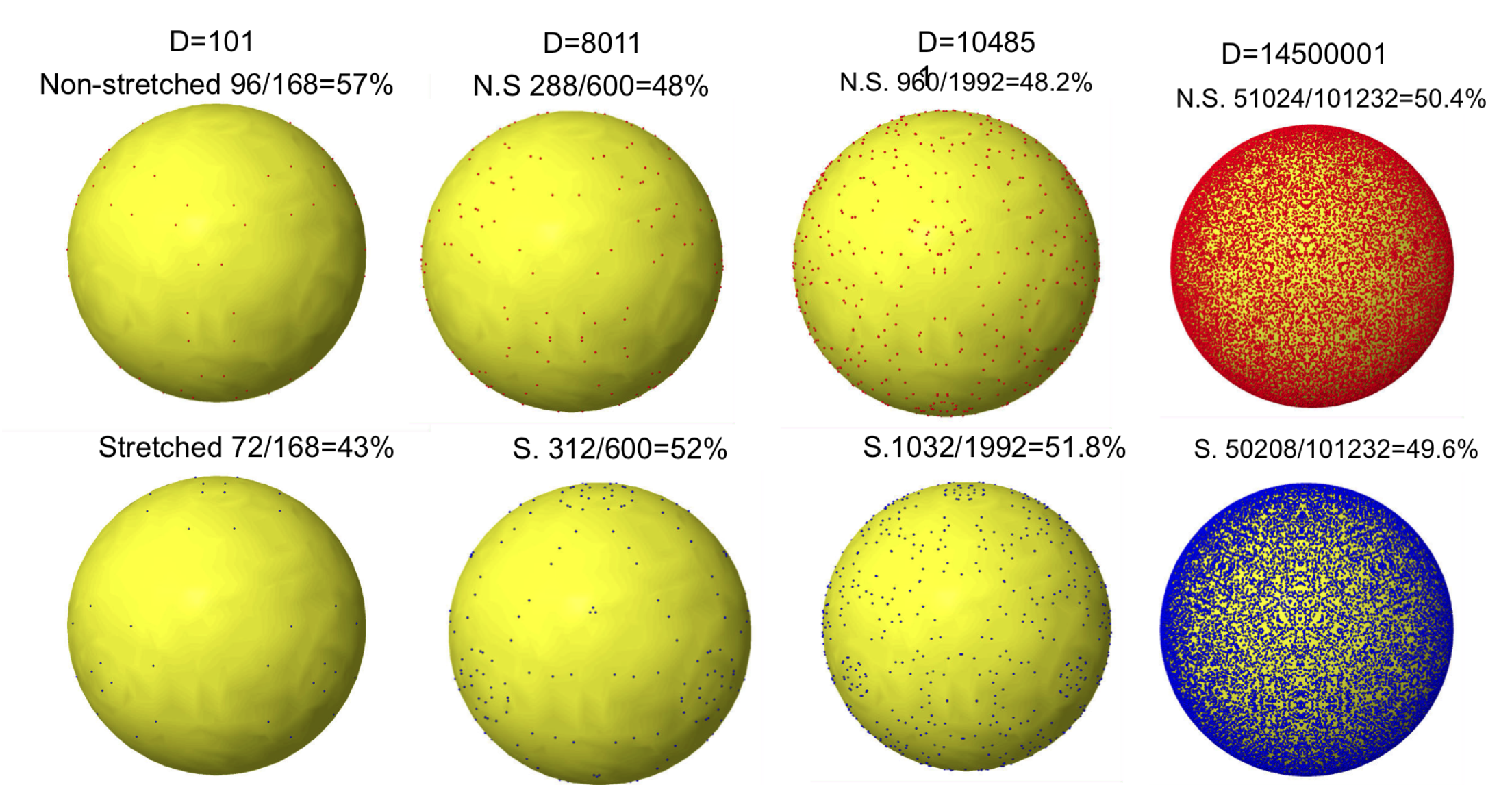}
\caption{non-stretched vs. stretched}
\label{two_colors}
\end{figure}

\begin{figure}[ht]
\centering
\includegraphics[width=0.9\textwidth]{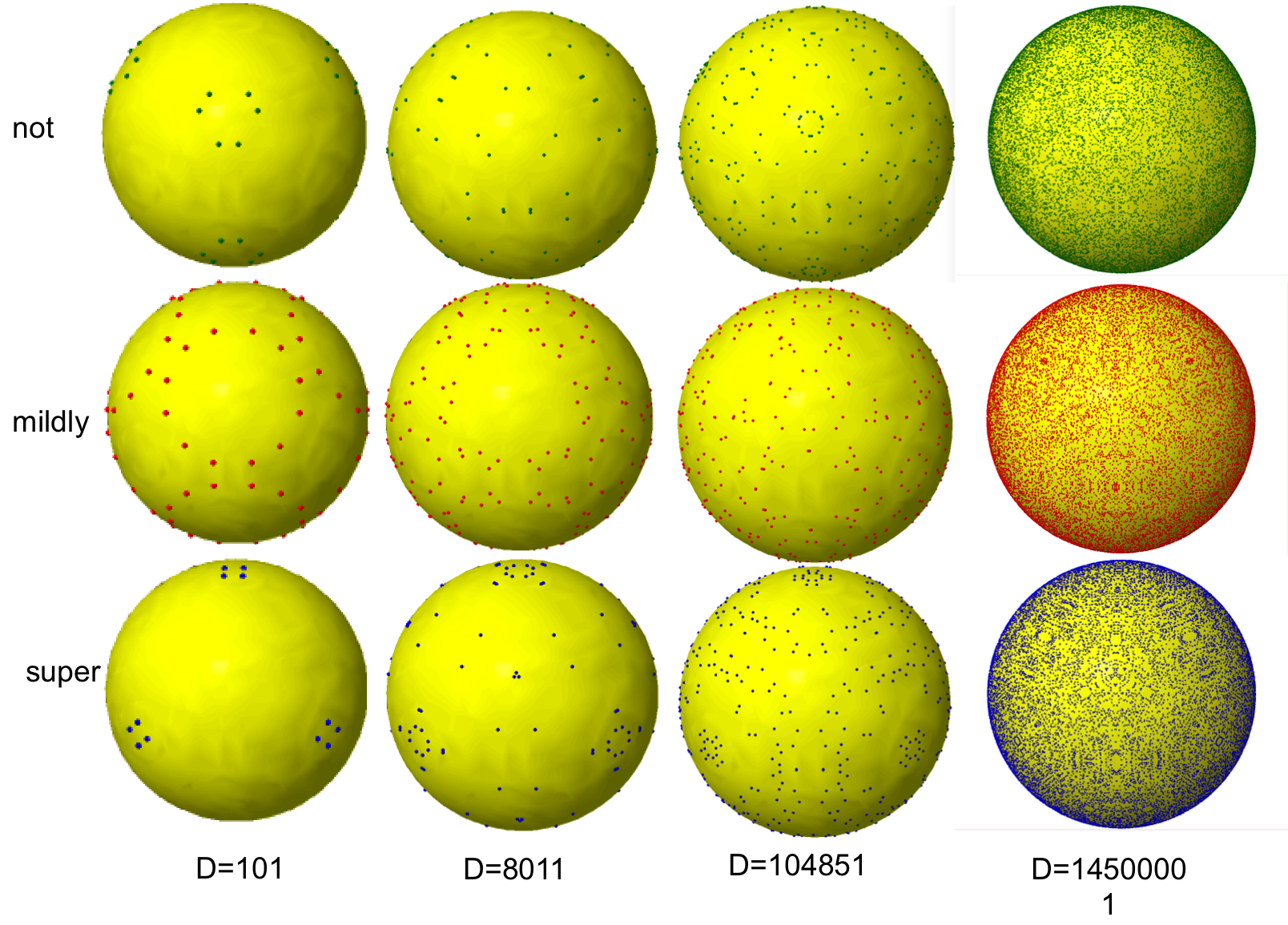}
\caption{non-stretched vs. mildly stretched vs. super-stretched}
\label{three_colors}
\end{figure}

Both equidistribution problems in $\bS^2$ and in $X_2$ may be individually phrased as two individual equidistribution problems on an $S$-arithmetic (or adelic) quotient as defined above (see \S~\ref{sec:TorusOrbitsArith} for more details). Linnik could prove these results under a congruence condition on $D$ modulo a fixed arbitrary prime (see \S~\ref{sec:TorusOrbitsArith} for more details). It turns out that the coupling of $v\in \bS^2(D)$ with its orthogonal lattice $[\Lambda_D]$ gives rise to a joining of the above $S$-arithmetic quotient. Under congruence conditions at \emph{two} fixed arbitrary primes, one could apply Theorem~\ref{thm:mainThm} to deduce the equidistribution of the normalized counting measure on $J_D$ to $m_{\bS^2}\otimes X_2$ when $D\to \infty$ along $D\neq 0,4,7\mod 8$ and the congruence conditions modulo the above two fixed primes. Recently \textcite{blomer2020simultaneous} showed that under the Generalized Riemann Hypothesis, the above equidistribution holds along $D\neq 0,4,7\mod 8$ without any congruence conditions.

\subsection{Bird's-eye view and the organization of this survey}\label{sec:birdsAndIngredients}
Let's give a (very subjective and entropy-centred) bird's-eye view of the main ingredients used in the proofs of the main theorems of the works \textcite{EL_Joinings2007,EL_Joinings2019}.
There are four main ingredients, all related to entropy:

\begin{itemize}
    \item \textbf{Basic ingredient:} Leafwise measures, Lyapunov weights, entropy contribution, and in particular the relation between maximal entropy contribution and invariance.
    \item\textbf{Second ingredient:} Product structure for coarse Lyapunov weights, Abramov--Rokhlin Formula for coarse Lyapunov weights as a corollary. 
    \item\textbf{Third ingredient:}
    The high-entropy method.
    \item\textbf{Fourth ingredient:} The low-entropy method. 
\end{itemize}

As entropy considerations underlie all the above ingredients and every aspect of the work we survey, the title of \S \ref{Sec:EntropyEverything} is entropy. In \S \ref{subsec:settings} we discuss a general setting and notation for the entire survey. In Sections \ref{subsec:Weights}-\ref{subsec:ContAndInva} we review the basic ingredient, giving a bit of intuition and introducing several representative examples that will be discussed throughout the survey. In Sections \ref{subsec:ProductStr}-\ref{Subsec:AbrohmovRokhlinForLya} we discuss the second ingredient. Besides the product structure, which is discussed in \S \ref{subsec:ProductStr}, Lemma~\ref{lem:Inequality} also plays a central role. Its proof is based on a construction of a special partition (a partition which is subordinate to a given subgroup of a given stable horospherical subgroup). Such constructions play a major role also in the proofs of the results, which constitute the basic ingredient. 

Using only the basic and the second ingredients, one can already draw several interesting corollaries, which we discuss in Section \ref{sec:twoCoro}.   For example, Corollary \ref{cor:differntRootDisj}, which we call the “two ingredients joinings theorem”, already gives the ability to reach strong conclusions on joinings in specific situations (which actually arise in applications), and Corollary \ref{cor:SupportProjectsOnto}  gives an important first step toward proving Theorem \ref{thm:mainThm}, and is used again and again throughout the proof.

In section \ref{sec:highRank} we introduce the third ingredient, the high-entropy method, which is summarized in theorem \ref{thm:highEntropy}. The main Theorem of \textcite{EL_Joinings2007} uses exactly these three ingredients (and could be referred to as the “three ingredients joinings theorem”) and the method of its proof is exemplified by classifying all joinings in a specific representative example, see Theorem \ref{Thm:SL3Classification}.  

The main theorem of \textcite{EL_Joinings2019} (Theorem \ref{thm:mainThm}) generalizes the main theorem of \textcite{EL_Joinings2007} in many senses, but the main difference is that  algebraic groups $\bG_i$ which are products (with at least two factors) of forms of $\SL_2$  had been previously excluded. Indeed, the third ingredient, the high-entropy method, implies nothing when the algebraic groups $\bG_i$ are products of rank one  groups. To also treat these cases, Einsiedler and Lindenstrauss used the fourth ingredient, the low-entropy method, yielding the “four ingredients joinings theorem” - Theorem \ref{thm:mainThm}. The general formulation of the low-entropy method may seem intimidating, but the proof of it, although highly intricate, is very concrete. We therefore choose to give a rough sketch of a complete proof of the step involving the low-entropy method for another representative example, see \S \ref{sec:Rank2}. This  sketch is based on a corresponding step in the proof of  arithmetic quantum unique ergodicity by \textcite{LindenstraussQUE}, where the low-entropy method was introduced (compare also to \cite[Ch.10]{PisaNotes}), and to the corresponding step in \textcite{EKL06}. 

The second part of the survey deals with applications of Theorem \ref{thm:mainThm}, which we discuss in Section \ref{sec:Applicaitions}. In \S \ref{sec:TorusOrbitsArith} we give a (rather long) survey of how arithmetic problems relate to adelic torus orbits. We hope that the novice reader could gain some intuition about the use of adeles and $p$-adic numbers in dynamics from this subsection. In \S \ref{subsec:ArithGeoJoinings} we finally present several problems which are related (directly or retrospectively) to a coupling of the problems discussed in \S \ref{sec:TorusOrbitsArith}. We then explain how and under which conditions, Theorem \ref{thm:mainThm} can give key input towards the solutions of these coupled problems. We finally present a new application for groups with high rank in \S \ref{subsec:HighRankAppli}.



\subsection*{Acknowledgements}
It is a pleasure to thank Manfred Einsiedler for being so generous with his knowledge throughout the years, and in particular, for great walks with Saskia while explaining to me  the ideas behind the low-entropy method, leading to the content of \S 5, for suggesting Theorem \ref{Thm:HighRankAppli} and for sketching its proof. I would also like to thank Andreas Wieser and Manuel Luethi for our discussions and for reading parts of a preliminary version of this survey. Thank you to Andreas Wieser for Figures \ref{Fig:Wieser1}-\ref{Fig:Wieser3}, and to Alex  Kontorovich for Figure \ref{Fig:Kontrovich}. Finally, I would like to thank Clemens Bannwart for generating several figures in TikZ, and René Rühr for giving good talks about the low-entropy method.

\section{Entropy}\label{Sec:EntropyEverything}
\subsection{Setting}\label{subsec:settings} We start with a few general definitions. We recall that the action of $G$ or its subgroups on a homogeneous space of the form $\Gamma\backslash G$ is given by $g.(\Gamma h):= \Gamma hg^{-1}$. Given a closed orbit of the form  $\Ga Hg$ for $H<G$, the group preserving it is $g^{-1}Hg$. If the restriction of a Haar measure to a fundamental domain of  $\Gamma\cap\pa{g^{-1}Hg}$ in $g^{-1}Hg$ has a finite measure, we can normalize it to give a fundamental domain measure one. Pushing it forward via the above action to $\Gamma\backslash G$, we get the \emph{normalized uniform/Haar measure} on the orbit~$\Ga Hg$.  

To ease the notation, we will fix a slightly simplified setting and refer to it below. We use the same notation as in Theorem \ref{thm:mainThm} but fix $r=2$. For clarity purposes, we repeat some notations: we let $\bG_1$ and $\bG_2$ be two semisimple algebraic groups defined over $\bQ$. One may keep in mind the following  “baby-cases”: $\bG_1=\SL_n,\bG_2=\SL_{n'}$ (with $n=n'$ being an interesting case), and the case $\bG_1=\bG_2=\SL_2\times\SL_2$. As in Theorem~\ref{thm:mainThm}, we let $G_i<\bG_i(\bQ_S)$ be subgroups,  $\Gamma_i$ be two \emph{irreducible} lattices in $G_i$ and denote $X_i=\Gamma_i\backslash G_i$ and $X=X_1\times X_2$. We assume that the action of $G_i$ on $X_i$ is saturated by unipotents. In the above “baby-cases” one can think about $S=\set{\infty}$ and $G_i=\SL_n(\bR)$ or $G_i=\SL_2(\bR)\times\SL_2(\bR)$ (or $G_i=\SL_2(\bR)\times\SL_2(\bQ_p)$) where the saturated by unipotents assumption is satisfied.  In the “baby-cases” we consider below,  all the images of the class $\cA'$-homomorphisms we consider, will be embedded in the (product of the) respective diagonal subgroup of $\SL_n$ and it will be easy to verify the class-$\cA'$ assumption for them. We denote further by $a=\pa{a_1,a_2}$ a diagonalizable element. The spaces $X_i$ are equipped with the Haar measure $m_{X_i}$ and we denote by $\mu$ an unknown measure on $X$, which is normally the unknown joining that we are trying to classify.

In very rough terms, given a joining $\mu$ as in the main theorem, Einsiedler and Lindenstrauss utilize and develop methods concerning entropy in order to find that $\mu$ is invariant under an unipotent element. Measure rigidity results \parencite{Ratner91,RatnerSadic95,MargulisTomanov94} are then employed to conclude that $\mu$ is algebraic. It is therefore essential for us to introduce and discuss a few of these entropy methods. These methods were predominantly developed by Einsiedler, Katok, Lindenstrauss and Spatzier. 

We will assume that the reader knows the basic definitions and properties of entropy (see e.g., \cite[\S 3]{PisaNotes}) and of conditional measures (see e.g., \cite[\S 5.3]{EWBOOK}).

\subsection{Weights and Lyapunov weights}\label{subsec:Weights}
For a diagonalizable regular element $a\in G$ the subgroup
$$
G_a^-=\set{g\in G: a^nga^{-n}\stackrel{n\to \infty}{\longrightarrow}e},
$$
where $e$ denotes the identity element in $G$, is called the  \emph{stable horospherical subgroup of~$a$}. Its counterpart $G_a^+:=G_{a^{-1}}^-$ is called the \emph{unstable horospherical subgroup of} $a$. These groups are central to the study of the action of $a$ on homogeneous spaces  of the form $\Gamma\backslash G$. For instance, we will  explain later that the entropy $h_\mu(a)$ of the action of $a$ is equal to a quantity that may be calculated solely through $G_a^{-}$ --- the entropy contribution $h_\mu(a,G_a^-)$. For now, let's just say that $G_a^-$ is “defined using the dynamics of $a$” and that it contains “dynamical information”. It turns out that $G_a^-$ is too crude for us in order to extract the information we need for the joinings classification (like invariance under one of its elements). It is therefore interesting to know which subgroups of $G_a^-$ can also be defined “dynamically”. To this end, we will need to consider the action of the whole diagonalizable subgroup $A=\phi(\bZ^d)$ (as in the notation of Theorem \ref{thm:mainThm}) and define weights for the action of $A$.

Recall that the adjoint action of an element $g\in G$ on the Lie algebra $\gog=\Lie G$ describes locally the conjugation action on $G$ as  $\exp$ and $\log$ (at least in characteristic zero) are local isomorphisms. This in turn describes the local dynamics on a homogeneous space of the form $\Gamma\backslash G$, as each point in $\Gamma\backslash G$ (for a discrete group $\Gamma$) is locally isomorphic to $G$.

Consider now $A=\phi(\bZ^d)$ for a class-$\cA'$ homomorphism $\phi$. A character $\lambda\colon A\to k^\times $ (here $k=\bQ_s$ for some $s\in S$) is called a \emph{weight} or a \emph{Lyapunov weight} if there exists a non-zero $\rmx\in\gog$ which is a common eigenvector for the adjoint action of $A$, that is, for every $a\in A$ we have $\Ad_a(\rmx)=\lambda(a)\rmx$, where $\Ad$ denotes the adjoint representation $\Ad_a(\rmx)=a\rmx a^{-1}$ . Once we consider a character via precomposition with $\phi$ as a map from $\bZ^d$ to $k^\times$, it follows from the class-$\cA'$ assumption, that characters are of the form $\lambda(\mathrm{n})=e^{\mathrm{n}\cdot w_\lambda}$ for some $w_\lambda\in \bR^d$ when $k=\bR$, and of the form $\lambda(\mathrm{n})=\pa{\theta_p}^{\mathrm{n}\cdot w_\lambda}$ for some $w_\lambda\in \bZ^d$  when $k=\bQ_p$ (where $\theta_p$ was defined just before Theorem $\ref{thm:mainThm}$). 
For a fixed character $\lambda$ the set of such common eigenvectors forms the weight spaces $\gog^\lambda$. As $A$ (or rather $\Ad(A)$) is simultaneously diagonalizable, we can decompose $\gog$ as 
$$
\gog=\sum_{\lambda\in \Phi}\gog^\lambda
$$
where $\Phi$ is the set of all weights for the action of $A$.
As $G_a^-$ is invariant under conjugation, it follows that 
\begin{equation}\label{eq:StableHoroDecompostion}
    \Lie(G_a^-)=\sum_{\lambda\in \Phi,\av{\lambda(a)}<1} \gog^\lambda
\end{equation}
 and that $\exp$ is a global homomorphism from this nilpotent Lie algebra to $G_a^-$. We may then ask ourselves if $\exp(\gog^\lambda)$ is a subgroup and if it is “dynamically defined”. First note that $\gog^\lambda$ is not necessarily a subalgebra when $\lambda^2$ is also a weight (in general we have  $[\gog^\gamma,\gog^\eta]\subset \gog^{\gamma+\eta}$). But more importantly, in order to find “dynamically-defined” subgroups, we may carry out one of the following two equivalent constructions. We can check which subalgebras of the form $\gog^\lambda$ are indistinguishable from one another using the whole dynamics of $A$. More precisely, we may say (temporarily) that two characters are equivalent and write $\lambda\sim\lambda'$ if for any $a\in A$ we have 
 $$
 \gog^\lambda\subset \Lie(G_a^-)\iff \gog^{\lambda'}\subset \Lie(G_a^-).
 $$ The union of an equivalence class of weights under this relation forms a subalgebra, and its image under $\exp$ is a subgroup of $G_a^-$ for some $a\in A$, which is “defined dynamically”.  Alternatively, we could have asked: which subgroups are the smallest non-trivial intersection of subgroups of the form $G_a^-$ for various $a\in A$? Both questions lead to the same answer; the resulting subgroups are called the coarse Lyapunov weights. To define these only in terms of the weights, we say that two weights $\lambda$ and $\eta$ are equivalent and write $\lambda\sim \eta$, if there exist positive integers $m,\ell$ with $\lambda^m=\eta^\ell$. The equivalence classes are called \emph{coarse Lyapunov weights}. Given a coarse Lyapunov weight $[\lambda]$, the sum $\sum_{\eta\sim\lambda}\gog^\eta$ is called the \emph{coarse Lyapunov weight space} and is denoted by $\gog^{[\lambda]}$. This is a nilpotent subalgebra and $\exp$ defines a global homomorphism from $\gog^{[\lambda]}$ to a nilpotent subgroup denoted by $G^{[\lambda]}$, which is called a \emph{coarse Lyapunov subgroup}.
 
 In order to visualize the weight structure in given examples, it is convenient to consider the logarithm of the (real or $p$-adic) absolute value of a given character: $\log\av{\lambda(\mathrm{n})}$. Viewing this as a map from $\bZ^d$ to $k$ we get a linear map that can be considered as an element of the dual of $\bZ^d\otimes k\cong k^d$. In the real case, $\log\circ \lambda$ is just the inner product with the vector $w_\lambda$ defined above, so one may identify the weight $\lambda$ with $w_\lambda$. Under this identification, the coarse Lyapunov weight $[\lambda]$  is the union of all weights $\eta$ such that $w_\eta\in\bR^+\cdot w_\lambda$.

Recall now the setting in \S \ref{subsec:settings}. Theorem \ref{thm:mainThm} tells us that the set of possible joinings is strongly connected to the relation between $G_1$ and $G_2$, equipped with the corresponding torus action $\phi_i:\bZ^2\ra G_i,\, i=1,2$. As we will explain later, the first instance where this reflects in the proof is the comparison between the weights structure of $G_1$ (with respect to $\phi_1$) and the weights structure of $G_2$ (with respect to $\phi_2$). We will consider four representing examples (Examples \ref{exa:SL2Squared}\eqref{exa:SL2IdenticalWeights}-\ref{exa:SL2Squared}\eqref{exa:SL2DifferentSpeeds}, and Example \ref{exa:SL3} below). The first three are all with $G_1=G_2=\SL_2(\bR)\times \SL_2(\bR)$ but equipped with different torus action. In Example \ref{exa:SL2Squared}\eqref{exa:SL2IdenticalWeights}, both weight structures will be identical, which will give rise to the presence of  possible “diagonal” joinings, e.g.~the one supported on the graph of the identity isomorphism $\mathrm{Id}\colon G_1\to G_2$. In Example \ref{exa:SL2Squared}\eqref{exa:SL245Rotation} the weights structures will be “unrelated” to each other, which will allow us to show that there is no non-trivial joining. For this case, we will just need the basic and the second ingredients mentioned in \S \ref{sec:birdsAndIngredients}, and in particular the high or the low entropy method are not needed,  see \S \ref{subsec:DifferentWeights}. In Example \ref{exa:SL2Squared}\eqref{exa:SL2DifferentSpeeds}, the weights will be related but will have “different speeds”. Also in this example, only the trivial joining may occur, but in order to show this one need to use the low entropy method, see \S \ref{sec:Rank2}).

\begin{exem}[Three examples on $\pa{\SL_2\times \SL_2}^2$]\label{exa:SL2Squared}
    Let $\bG_1=\bG_2=\SL_2\times\SL_2$, $S=\set{\infty}$, $G_i=\bG_i(\bR),\,i=1,2, G=G_1\times G_2$. Let $D$ be the diagonal group in $\SL_2$, 
$$
    U=\set{\begin{pmatrix}
  1 & *  \\
 0& 1\end{pmatrix}},\,V=\set{\begin{pmatrix}
  1 & 0  \\
 *& 1\end{pmatrix}},\,a_t=\begin{pmatrix}
  e^{-\frac{t}{2}} &  \\
 & e^{{\frac{t}{2}}}\end{pmatrix}
$$
 and  $\gou,\gov$ the corresponding Lie algebras. We consider 
three different homomorphisms from $\bZ^2$ to $D\times D$ (the latter is a subgroup of both $G_1$ and $G_2$):
 \begin{equation}\label{eq:threeMaps}
 \phi(t,s)=(a_t,a_s),\quad \psi(t,s)=(a_{t+s},a_{t-s}), \quad \tau(t,s)=(a_{2t},a_{2s}).
 \end{equation}
Each one of them is of class $\cA'$-homomorphism and it has a related weight structure: let $\goh=\Lie(\SL_2(\bR)\times\SL_2(\bR))$ and note that with respect to each of the maps in \eqref{eq:threeMaps},  the weight spaces are $$\goh^{\lambda_1}=\gou\times\set{0},\,\goh^{-\lambda_1}=\gov\times\set{0},\, \goh^{\lambda_2}=\set{0}\times \gou,\, \goh^{-\lambda_2}=\set{0}\times \gov$$
 but for different characters $\lambda_1$ and $\lambda_2$: 
\begin{itemize}
    \item For $\phi(t,s)$:   $w_{\lambda_1}=(-1,0)$ (since $\lambda_1(\phi(t,s))=e^{(t,s)\cdot (-1,0)}=e^{-t}$), and    $w_{\lambda_2}=(0,-1)$.
    \item For $\psi(t,s)$: $w_{\lambda_1}=(-1,-1)$ (since $\lambda_1(\psi(t,s))=e^{(t,s)\cdot (-1,-1)}=e^{-(t+s)}$), and  $w_{\lambda_2}=(-1,1)$.     
    \item For $\tau(t,s)$: $w_{\lambda_1}=(-2,0)$ (since $\lambda_1(\tau(t,s))=e^{(t,s)\cdot (-2,0)}=e^{-2t}$), $w_{\lambda_2}=(0,-2)$.
\end{itemize}
In general, one has  $w_{-\eta}=-w_\eta$, so we omit the weight $-\lambda_i$ above.
These roots systems (only the non-trivial weights) are drawn in Figure \ref{rootSL2Squared}.


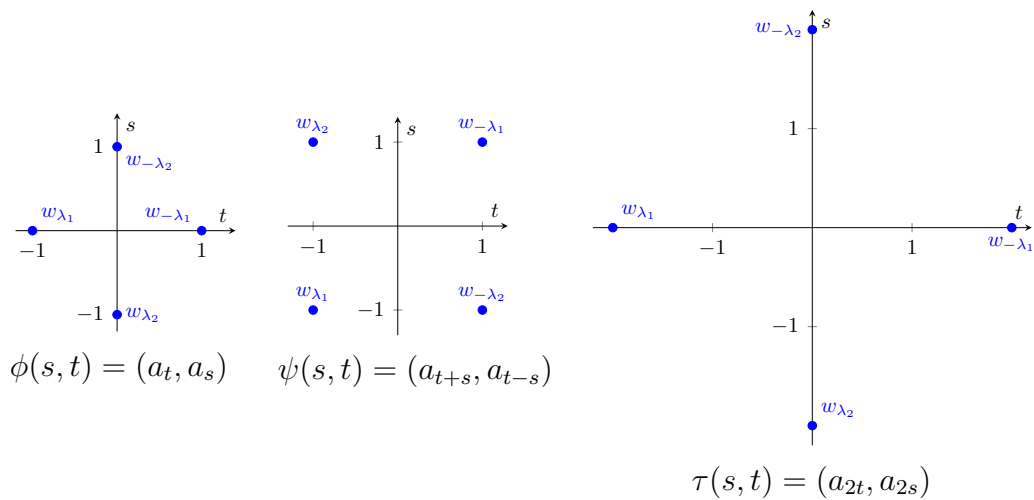
\begin{figure}[h]
\centering
\begin{tikzpicture}[scale=0.8,baseline=-25]
    \begin{axis}[%
    height=5.2cm,width=5.2cm,
    ymin=-1.2,ymax=1.4,xmin=-1.2,xmax=1.4,axis lines=middle,
    every axis/.append style={font=\footnotesize},
    xtick={-1,1},
    ytick={-1,1},
    xlabel={$t$},
    ylabel={$s$}
    ]
    \node[above left] at (1,0) {\color{blue}$w_{-\lambda_1}$};
    \filldraw[blue] (1,0) circle (2pt); 
    \node[below right] at (0,1) {\color{blue}$w_{-\lambda_2}$};
    \filldraw[blue] (0,1) circle (2pt); 
    \node[right] at (0,-1) {\color{blue}$w_{\lambda_2}$};
    \filldraw[blue] (0,-1) circle (2pt); 
    \node[above right] at (-1,0) {\color{blue}$w_{\lambda_1}$};
    \filldraw[blue] (-1,0) circle (2pt); 
    \end{axis}
    \node at (1.7,-0.6) {$\phi(s,t)=(a_t,a_s)$};
\end{tikzpicture}
\hspace{3pt}
\begin{tikzpicture}[scale=0.8]
    \begin{axis}[%
    height=5.2cm,width=5.2cm,
    ymin=-1.3,ymax=1.3,xmin=-1.3,xmax=1.3,axis lines=middle,
    every axis/.append style={font=\footnotesize},
    xtick={-1,1},
    ytick={-1,1},
    xlabel={$t$},
    ylabel={$s$}
    ]
    \node[above] at (1,1) {\color{blue}$w_{-\lambda_1}$};
    \filldraw[blue] (1,1) circle (2pt); 
    \node[above] at (1,-1) {\color{blue}$w_{-\lambda_2}$};
    \filldraw[blue] (1,-1) circle (2pt); 
    \node[above] at (-1,-1) {\color{blue}$w_{\lambda_1}$};
    \filldraw[blue] (-1,-1) circle (2pt); 
    \node[above] at (-1,1) {\color{blue}$w_{\lambda_2}$};
    \filldraw[blue] (-1,1) circle (2pt); 
    \end{axis}
    \node at (2.1,-0.6) {$\psi(s,t)=(a_{t+s},a_{t-s})$};
\end{tikzpicture}
\hspace{3pt}
\begin{tikzpicture}[scale=0.8,baseline=18]
    \begin{axis}[%
    height=8.8cm,width=8.8cm,
    ymin=-2.2,ymax=2.2,xmin=-2.2,xmax=2.2,axis lines=middle,
    every axis/.append style={font=\footnotesize},
    xtick={-1,1},
    ytick={-1,1},
    xlabel={$t$},
    ylabel={$s$}
    ]
    \node[below] at (2,0) {\color{blue}$w_{-\lambda_1}$};
    \filldraw[blue] (2,0) circle (2pt); 
    \node[left] at (0,2) {\color{blue}$w_{-\lambda_2}$};
    \filldraw[blue] (0,2) circle (2pt); 
    \node[above right] at (0,-2) {\color{blue}$w_{\lambda_2}$};
    \filldraw[blue] (0,-2) circle (2pt); 
    \node[above right] at (-2,0) {\color{blue}$w_{\lambda_1}$};
    \filldraw[blue] (-2,0) circle (2pt); 
    \end{axis}
    \node at (3.6,-0.6) {$\tau(s,t)=(a_{2t},a_{2s})$};
\end{tikzpicture}
\caption{Root systems for $\SL_2\times \SL_2$}
\label{rootSL2Squared}
\end{figure}
We  consider now three different torus actions on $\Gamma_1\backslash\bG_1(\bR)\times \Gamma_2\backslash\bG_2(\bR)$ where~$\Gamma_1$ and~$\Gamma_2$ are irreducible arithmetic lattices, and classify joining with respect for each of these three actions.  These actions are given by the following class-$\cA'$ homomorphisms: 
\begin{enumerate}
    \item \label{exa:SL2IdenticalWeights}$(\phi,\phi)\colon\bZ^2\to \pa{D\times D}^2<G_1\times G_2.$ (“Identical weights”)
    \item \label{exa:SL245Rotation}$(\phi,\psi)\colon\bZ^2\to \pa{D\times D}^2<G_1\times G_2.$ (“$45^\circ$ rotation”)
    \item \label{exa:SL2DifferentSpeeds}$(\phi,\tau)\colon\bZ^2\to \pa{D\times D}^2<G_1\times G_2.$ (“Different speeds”)
\end{enumerate}
The  action of $(\phi,\phi) $ has  four (non-trivial) weight spaces and four  coarse Lyapunov weights, the  action of $(\phi,\psi) $ has  eight (non-trivial) weight spaces and eight  coarse Lyapunov weights, and the  action of $(\phi,\tau) $ has  eight (non-trivial) weight spaces but only four  coarse Lyapunov weights. The weight spaces may be visualized as in figure \ref{figure:SL2weightsDiagrams}





\begin{figure}[ht]
\centering
\tikzmath{\d = 0.15;}
\begin{tikzpicture}[%
x={(1 cm,0 cm)},%
y={(-.1 cm, .3 cm)},%
z={(.2, .8 cm)}]
    \foreach \x/\y in {1/0,-1/0,0/1,0/-1}
    {
        \filldraw[red] (\x,-\d,\y) circle (1pt);
    }
    
    \draw[->] (-1.5,0,0)--(1.5,0,0);
    \draw[->] (0,0,-1.5)--(0,0,1.5);
    
    \foreach \x/\y in {1/0,-1/0,0/1,0/-1}
    {
        \filldraw[blue] (\x,\d,\y) circle (1pt);
    }
    \node at (0,0,-2.5) {\footnotesize identical weights};
\end{tikzpicture}
\hspace{15pt}
\begin{tikzpicture}[%
x={(1 cm,0 cm)},%
y={(-.1 cm, .3 cm)},%
z={(.2, .8 cm)}]
    \draw[->] (-1.5,0,0)--(1.5,0,0);
    \draw[->] (0,0,-1.5)--(0,0,1.5);
    
    \foreach \x/\y in {1/0,-1/0,0/1,0/-1}
    {
        \filldraw[red] (\x,0,\y) circle (1pt);
    }
    \foreach \x/\y in {1/1,-1/1,1/-1,-1/-1}
    {
        \filldraw[blue] (\x,0,\y) circle (1pt);
    }
    \node at (0,0,-2.5) {\footnotesize $45^\circ$ rotation};
\end{tikzpicture}
\hspace{15pt}
\begin{tikzpicture}[%
x={(1 cm,0 cm)},%
y={(-.1 cm, .3 cm)},%
z={(.2, .8 cm)}]
    \draw[->] (-2.5,0,0)--(2.5,0,0);
    \draw[->] (0,0,-2.5)--(0,0,2.5);
    
    \foreach \x/\y in {1/0,-1/0,0/1,0/-1}
    {
        \filldraw[red] (\x,0,\y) circle (1pt);
    }
    \foreach \x/\y in {2/0,-2/0,0/2,0/-2}
    {
        \filldraw[blue] (\x,0,\y) circle (1pt);
    }
    \node at (0,0,-3.5) {\footnotesize different speeds};
\end{tikzpicture}
\caption{Decomposition of $\mathfrak{sl}_2^4=\pa{\mathfrak{sl}_2\times \mathfrak{sl}_2}^2$ into weight spaces}
\label{figure:SL2weightsDiagrams}
\end{figure}
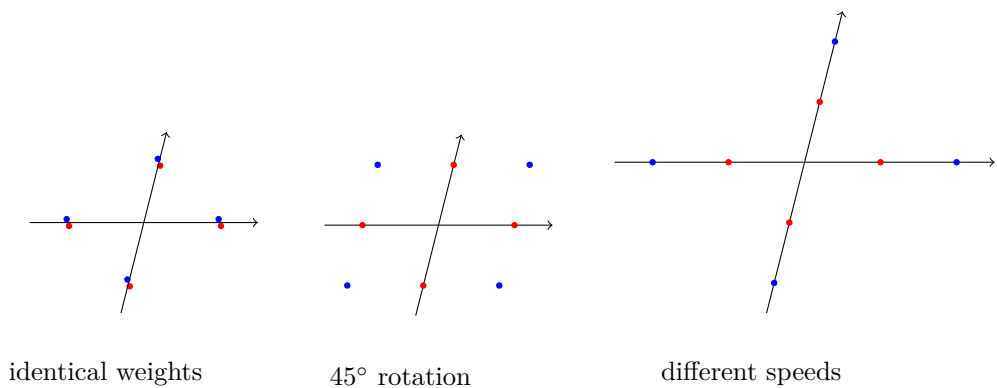

\end{exem}
In the examples above, the (non-opposite) weight spaces always commute with each other. As we will see later, this means that the high entropy method (see \S \ref{sec:highRank}) does not help at all in these cases. This is why these cases were excluded in \textcite{EL_Joinings2007} and dealt with later in \textcite{EL_Joinings2019} using the low entropy method (see \S \ref{sec:Rank2}). To give a representing example where there are non-commuting Lyapunov weights, we consider the following example.
\begin{exem}\label{exa:SL3}
    Let $\bG_1=\bG_2=\SL_3$, $S=\set{\infty}$, $G_i=\bG_i(\bR),\,i=1,2, G=G_1\times G_2$. Let $D<G_i$ be the diagonal group and for $i=1,2$, let  $$\phi_i:\bZ^2\to D,\,\phi_i(t,s)=\begin{pmatrix}
  e^{-\frac{t}{2}} & & \\
 & e^{-\frac{s}{2}}&\\
& & e^{\frac{t+s}{2}} 
 \end{pmatrix}.$$
The map $\phi=\pa{\phi_1,\phi_2}$ is of class $\cA'$ and we set $A=\phi(\bZ^2)<G\times G$. 

Let's concentrate for a moment on one of the factors, say on $G_1=\SL_3(\bR)$ and  $A_1:=\phi_1(\bZ^2)$: for $1\leq i\neq j\leq 3$ let $U_{ij}<\SL_3(\bR)$ be the unipotent one-parameter group with $1$'s on the diagonal and zeroes everywhere except from the $ij$-entry, and $\gou_{ij}$ the corresponding Lie algebra. Then, the eight-dimensional Lie algebra $\mathfrak{sl}_3$ decomposes into weight spaces as $\Lie(D)\oplus \pa{\oplus_{1\leq i\neq j\leq 3} \gou_{ij}}$. 
Furthermore, each of the spaces $\gou_{ij}$ forms a coarse Lyapunov weight with respect to $\phi_1$, and the reader may calculate the corresponding weights to get the weight diagram in Figure \ref{figure:SL3weights}, where we denote the weight vector corresponding to $\gou_{ij}$ with $w_{ij}$.


\begin{figure}[ht]
\begin{center}
\begin{tikzpicture}
    \begin{axis}[%
    height=10cm,width=10cm,
    ymin=-1.6,ymax=1.6,xmin=-1.6,xmax=1.6,
    axis lines=middle,
    every axis/.append style={font=\footnotesize},
    minor tick style={draw=none},
    major tick style=thick,
    grid style={line width=0.1, draw=gray!20},
    major grid style={line width=0.2,draw=gray!60},
    grid=both,
    xtick distance = 1,
    ytick distance = 1,
    minor tick num=3
    ]
    
    \coordinate (w12) at (-.5,.5) {};
    \coordinate (w21) at (.5,-.5) {};
    \coordinate (w13) at (-1,-.5) {};
    \coordinate (w31) at (1,.5) {};
    \coordinate (w23) at (-.5,-1) {};
    \coordinate (w32) at (.5,1) {};
    
    \draw[very thick,blue] (w12)--(w13)--(w23)--(w21)--(w31)--(w32)--(w12);
    
    \filldraw[blue] (w12) circle (3pt);
    \filldraw[blue] (w21) circle (3pt);
    \filldraw[blue] (w13) circle (3pt);
    \filldraw[blue] (w31) circle (3pt);
    \filldraw[blue] (w23) circle (3pt);
    \filldraw[blue] (w32) circle (3pt);
    
    \node[above left] at (w12) {$w_{12}$};
    \node[below right] at (w21) {$w_{21}=-w_{12}$};
    \node[left] at (w13) {$w_{13}$};
    \node[right] at (w31) {$\begin{array}{l}w_{31}= \\ -w_{13}\end{array}$};
    \node[below] at (w23) {$w_{23}$};
    \node[above] at (w32) {$w_{32}=-w_{23}$};
    \end{axis}
\end{tikzpicture}
\end{center}
\caption{Decomposition of $\mathfrak{sl}_3$ into weight spaces}
\label{figure:SL3weights}
\end{figure}
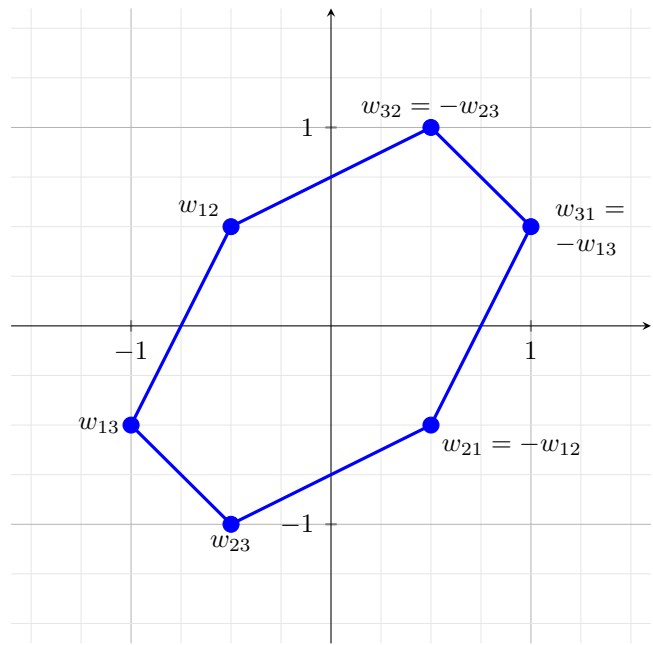

There are commutation relations between the weight spaces, which are also visible via addition of the vectors $w_{ij}$: for example $[\gou_{12},\gou_{23}]=\gou_{13}$. Moreover, one can read from the weight diagram in Figure \ref{figure:SL3weights} the decomposition of the Lie algebra of any stable horospherical: for $a=\phi_1(t,s)$, it follows from the definition of the weight spaces and their corresponding weight vectors that $\pa{\gosl_3}^{-}_{a}$ is the direct sum of all the weight spaces whose weight vector has a negative inner product with  $(t,s)$. That is, the corresponding weight vectors must belong  to a half-plane defined by the line $(t,s)^\perp$. Further, note that for each weight space, the corresponding weight vector can be written as a sum of two other weight vectors, such that all three weight vectors belong to a half-plane related to some $a\in A_1$. It follows that each weight space can be  written as the commutator of two other weight spaces, all belonging to the same stable horospherical of some $a\in A_1$. This fact will be crucial for us in  \S \ref{sec:highRank}.

Fix now  two  irreducible lattices  $\Gamma_1$ and $\Gamma_2$ in $\SL_3(\bR)$ and let's return to consider joinings for the action of $\phi$ on $\Gamma_1\backslash G_1\times \Gamma_2 \backslash G_2$. We have at least two types of joinings in this setting: the trivial joining, which is the product measure $m_{\Gamma_1\backslash G_1}\otimes m_{\Gamma_2\backslash G_2}$, and a “diagonal” joining, for example, the Haar measure supported on an orbit of $\set{(g,g):g\in \SL_3(\bR)}< \SL_3^2(\bR)$. In Theorem \ref{Thm:SL3Classification} we give a more precise statement and see that these are the only possible types of \emph{ergodic} joinings in this case (In general, an arbitrary joining is a convex combination of ergodic joinings). 
\end{exem}

\subsection{Leafwise measures}

The joining classification is a part of the classification of measures with torus actions on homogeneous spaces, under the assumptions that some elements act with positive entropy. In the joinings classification, the positive entropy assumption is “hidden” in the fact that each factor is equipped with the Haar measure and therefore the action on each factor has positive entropy, which imply that any non-trivial $a\in A$ has positive entropy (see Remark \ref{rem:positiveEnt}). Positive entropy is related to growth/decay properties of the measure. For example, for a hyperbolic toral automorphism positive entropy is equivalent to positive Hausdorff dimension of the measure which, in turn, is equivalent to an almost surely decay property of the measure of $r$-balls when $r\to 0$. In our context, the action of $a\in A$ on $\Gamma\backslash G$ is not hyperbolic as $a$~commutes with all elements of $A$. Nevertheless, one could characterize again positive entropy as an almost sure decay property by considering the so-called Bowen-balls. Moreover, as we will state precisely in Proposition~\ref{prop:EntAsVolume}, one can relate the entropy $h_\mu(a)$ of a diagonalizable element~$a$ on a homogenous space $\Gamma\backslash G$ to decay of the measure along the stable horospherical subgroup~$G_a^-$. Given the decomposition above of $G_a^-$ to coarse Lyapunov subgroups, one may ask which subgroups of~$G_a^-$ contribute entropy. To be able to answer such questions and to be able to state the above statements in a choices-free way (e.g.~independently of a choice of a specific generating partition for the entropy calculation) one is led to the notion of leafwise measures.

The aim of leafwise measures in our context is to describe a given measure $\mu$ on $X:=\Gamma\backslash G$, as in Theorem \ref{thm:mainThm}, along orbits of an $a$-normalized subgroup $U<G_a^-$ with $h_\mu(a)>0$. For example, along $U$ being a coarse Lyapunov subgroup as above. Unfortunately, normally there exists no countably-generated $\sigma$-algebra whose atoms are $U$-orbits (at least for ergodic systems when individual orbits have measure zero). Therefore, the natural candidates that will describe $\mu$ along orbits, conditional measures,  cannot be constructed. Leafwise measures are constructed to remedy this problem. Roughly said, if one restricts to a bounded portion of a $U$-orbit, the orbits of this bounded portion can be the atoms of countably-generated $\sigma$-algebra. One considers the conditional measures which respect to such a $\sigma$-algebra. In order to get a measure that describes the measure $\mu$ along the full orbit, the idea is to consider larger and larger parts of the $U$-orbits and to patch  the resulting conditional measures together using the group $U$ itself as a reference object. This construction results in a family of locally-finite measures $\set{\mu_x^U}_{x\in X}$ on $U$, called the \emph{leafwise measures} of $U$. They are defined almost everywhere and up to proportionality (as the “patching” can only be done up to an arbitrary constant). Defining these measures on $U$ rather on the space itself has the advantage that we can directly exploit the group structure on $U$ when we extract information from $\set{\mu_x^U}_{x\in X}$.

In this next subsection, we give a precise definition of leafwise measures and their characterizing property. The best way to understand their essence is via \textcite[\S 6]{PisaNotes} where the above patchwork is carried out carefully. 
\subsubsection{Definition of leafwise measures}
The setting that we have in mind is a homogeneous space $X=\Gamma\backslash G$, an acting diagonalizable element $a\in G$ which has positive entropy with respect to an $a$-invariant measure $\mu$ and a closed subgroup $U<G_a^-$. Note that a priori $\mu$ has no known invariance under $U$. To help the reader to see what is essential for the construction, we use a slightly generalized setting in the following proposition, which defines the leafwise measures. The subgroup $H$ that appears in the proposition will be later $G_a^-$ or an $a$-normalized subgroup of it.  
\begin{prop}\label{prop:LFMeasures}

  Let $X=\Gamma\backslash G$ be an $S$-adic homogeneous space and $\mu$ a locally-finite measure on $X$. Let $H<G$ be a closed subgroup which acts freely on $X$ almost everywhere, that is, for a.e.~$x\in X$ the map 
    \begin{equation}
    \label{eq:injectivity of H}
    h\in H\mapsto h.x:=xh^{-1}    
    \end{equation}
    is injective.
  Then, there exists a set of full measure $X'\subset X$ and family $\set{\mu_x^H}_{x\in X'}$ of locally-finite measures on $H$, unique up-to proportionality, having the following properties.
  \begin{enumerate}
      \item (Characterizing property - describing $\mu$ along $H$) Let $Y\subset X$ be a measurable set with $\mu(Y)<\infty$ and $\cA$ be a countably generated $\sigma$-algebra on $Y$ which is \emph{$H$-subordinate} (that is, its atoms $[y]_\cA$ are bounded pieces of an $H$-orbit, or more precisely, there exists a bounded open subset $V_y\subset H$ with $[y]_\cA=V_y. y$. The subset~$V_y$ is called the shape of the atom $[y]_\cA$). Then, the leafwise measure $\mu_y^H$ describes the conditional measure $\mu_y^\cA$ in the following sense:
      $$
      \mu_y^\cA =\frac{1}{\mu_y^H(V_y)}\pa{\mu_y^H|_{V_y}. y}
      $$
      where $\mu_y^H|_{V_y}$ stands for the restriction of $\mu_y^H$ to $V_y$ and $\mu_y^H|_{V_y}. y$ for pushing this measure under the map \eqref{eq:injectivity of H}.
      \item \label{item:shifty}(Compatibility/Shifting formula) The measure $\mu_x^H$ describes the orbit $H.x$ with $e\in H$ corresponding to $x\in X$. In particular, the following \emph{shifting formula} holds:
      \begin{equation}\label{eq:shifty}
          \pa{R_h}_{*}\mu_{h.x}^H\propto \mu_x^H 
      \end{equation}
      where $R_h\colon H\to H$ is defined by $ h'\mapsto h'h$ and $\propto$ denote proportionality of measures.
      \end{enumerate}
We normalize\footnote{See \textcite[\S 6.29]{PisaNotes} for a discussion of
  possible normalizations.} the measures $\mu_x^H$ to have $\mu_x^H(B_1^H)=1$
where $B_1^H$ is the ball of radius~$1$ in~$H$ around~$e$. We further have    
      
\begin{enumerate} \setcounter{enumi}{2}

      \item \label{item:Invariance}(Invariance) The measure $\mu$ is $H$-invariant if and only if $\mu_x^H$ is equal to the Haar measure on $H$ for almost every $x\in X$.
     
      \item\label{item: equivary} (Measure preservation implies equivariance) Assume further that there exists $a\in G$ that normalizes $H$ and preserves the measure $\mu$. Then 
      \begin{equation}\label{eq:equivariancy}
      \mu_{a.x}^H\propto \pa{\theta_a}_*\mu_x^H    
      \end{equation}
       where $\theta_a:H\to H$ is defined by $h\mapsto aHa^{-1}$. If particular, if $\theta_a$ acts trivially on $H$, we have $\mu_{a.x}^H= \pa{\theta_a}_*\mu_x^H$ (by the above normalization).
  \end{enumerate}
\end{prop}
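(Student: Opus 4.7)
The plan is to follow the standard construction of leafwise measures via an exhaustion argument on increasingly large $H$-subordinate $\sigma$-algebras, with the compatibility (shifting) relation serving as the glue that lets one paste the resulting conditional measures into a single locally finite measure on all of $H$.

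First I would fix a countable exhaustion $V_1 \subset V_2 \subset \dots$ of $H$ by relatively compact open sets with $\bigcup_n V_n = H$. Using that $H$ acts freely almost everywhere on $X$ via \eqref{eq:injectivity of H}, one can construct, on any bounded measurable $Y \subset X$ of finite $\mu$-measure, a countably generated $H$-subordinate $\sigma$-algebra $\cA_n$ whose atom $[y]_{\cA_n}$ has shape $V_n$ (or a suitable subset of it, cut out by a local cross-section to the $H$-action). I would then apply the standard theorem on conditional measures to produce $\mu_y^{\cA_n}$ on the atom, transport it back to $H$ via the inverse of the orbit map $h \mapsto h.y$, and obtain candidate locally finite measures $\nu_y^{(n)}$ supported on $V_n$.

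The central compatibility step is that for $m \geq n$, the restriction of $\nu_y^{(m)}$ to $V_n$ is \emph{proportional} to $\nu_y^{(n)}$: this follows from the fact that passing from $\cA_n$ to a refinement whose atoms are still $H$-subordinate (with larger shape) simply subdivides each atom, and conditional measures behave compatibly under such refinements up to the usual normalization factor. Dividing by $\nu_y^{(n)}(B_1^H)$ (which is positive almost everywhere by construction) and taking the pointwise limit produces a single locally finite measure $\mu_y^H$ on all of $H$ with $\mu_y^H(B_1^H) = 1$. The characterizing property (1) then holds by construction, and uniqueness up to proportionality is automatic from (1), since any two families satisfying the characterization must have restrictions to $V_n$ proportional for every $n$. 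The shifting formula \eqref{eq:shifty} I would derive by observing that the atom of $\cA_n$ through $h.y$ has shape $V_n$ relative to $h.y$, hence shape $V_n h$ relative to $y$, so passing from $y$ to $h.y$ corresponds precisely to right translation by $h$ on $H$.

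For item \eqref{item:Invariance}, the ``if'' direction follows directly from the characterizing property: if $\mu_y^H$ equals Haar on $H$ almost everywhere, the conditionals on every $H$-subordinate $\sigma$-algebra are Haar measures on their atoms, and a standard argument (e.g.\ on continuous compactly supported functions) upgrades this to $H$-invariance of $\mu$. Conversely, $H$-invariance of $\mu$ combined with the shifting formula \eqref{eq:shifty} forces $\mu_x^H$ to be invariant under all right translations on $H$, hence proportional to Haar, hence Haar after normalization. Finally, for the equivariance in \eqref{item: equivary}, the key point is that if $\cA$ is $H$-subordinate then $a.\cA$ is again $H$-subordinate (since $a$ normalizes $H$ and the atoms transform under $\theta_a$), and $a_\ast \mu = \mu$ forces the characterizations to match, yielding \eqref{eq:equivariancy} up to proportionality; the normalization $\mu_x^H(B_1^H) = 1$ then pins down the constant to $1$ exactly when $\theta_a$ fixes $B_1^H$, i.e.\ when $\theta_a$ is trivial.

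The main obstacle I anticipate is not any single step but the global bookkeeping: showing that the family $\{\mu_x^H\}$ is jointly measurable in $x$, and that all ``almost everywhere'' statements hold on a single conull set $X'$ which is simultaneously compatible with the exhaustion $\{V_n\}$, with the shifting formula \eqref{eq:shifty} (which requires throwing out an $H$-invariant null set), and, in item \eqref{item: equivary}, with the action of the countable group $\langle a \rangle$. A secondary subtlety lies in the sharpening from proportionality to equality in \eqref{item: equivary} when $\theta_a$ is trivial, which requires carefully tracking the normalization and using that $\theta_a(B_1^H) = B_1^H$ in that case.
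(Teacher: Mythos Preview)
The paper does not give its own proof of this proposition; it states the result and refers the reader to \textcite[\S 6]{PisaNotes} for the careful ``patchwork'' construction, which is exactly the exhaustion-and-gluing approach you sketch. Your outline is correct and matches both the paper's informal description (larger and larger $H$-subordinate $\sigma$-algebras, conditional measures patched together on $H$) and the referenced source, including your identification of the main technical nuisance as the joint measurability and conull-set bookkeeping.
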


Properties \ref{item:shifty} and \ref{item: equivary} can be visualized as follows: Imagine that the colour intensity in Figure \ref{figure:shifty} represents the distribution of the given measure $\mu$. An embedding of an $H$-orbit $H.x=H.\pa{h.x}$ gives rise to compatible distributions $\mu_{x}^H,\mu_{h.x}^H$ on $H$ in the sense that wrapping them on the orbit with $e\in H$ corresponding to the point $x\in X$ (resp.~to the point $h.x\in X$) will be compatible to the given distribution of the measure. This compatibility  give rise to the shifting property stated in equation \eqref{eq:shifty}.

Similarly, imagine that the given measure $\mu$ gives rise to some distribution along a given $H$-orbit $H.x$ as in Figure \ref{figure:equivy}. Imagine further that the $a$ action contracts the neighbourhood around $x$. Then, the distribution along the orbit $H.\pa{a.x}$ is just a contracted version of the distribution along $H.x$. This  gives rise to the equivariance property stated in equation \eqref{eq:equivariancy}.


\begin{figure}[ht]
\centering
\begin{tikzpicture}[baseline=-70]
    \fill[blue!25!white] (-3,.97) rectangle (0.3,1.03);
    \shade[left color=blue!25!white,right color=blue] (0.3,.97) rectangle (2,1.03);
    \fill[blue] (2,.97) rectangle (3,1.03);
    
    \node at (-3.8,1) {$\mu_x^H$};
    
    \draw (-1,1.1)--(-1,.9);
    \node[above] at (-1,1.1) {$e$};
    \draw (1.5,1.1)--(1.5,.9);
    \node[above] at (1.5,1.1) {$h$};
    
    \node[above] at (0,0) {$H$};
    
    \fill[blue!25!white] (-3,-1.03) rectangle (-2.2,-.97);
    \shade[left color=blue!25!white, right color=blue] (-2.2,-1.03) rectangle (-.5,-.97);
    \fill[blue] (-.5,-1.03) rectangle (3,-.97);
    
    \node at (-3.8,-1) {$\mu_{h.x}^H$};
    
    \draw (-1,-1.1)--(-1,-.9);
    \node[above] at (-1,-.9) {$e$};
\end{tikzpicture}
\hspace{20pt}
\begin{tikzpicture}[scale=0.8]
    
    \fill[color=blue!25!white]
    (7,4) to[out=190, in=30] 
    (3,3) to[out=210,in=35] 
    (1,1.5) to[out=215,in=45] 
    (-1.5,.3) to[out=225,in=150] 
    (-1,-1.5) to[out=330,in=180]
    (2,-2.3) to
    (3.5,1.5) to[out=110, in=210]
    (4.5,2.8) to[out=30, in=200]
    (7,3.7);
    
    \shade[left color=white, right color=blue,shading angle=68.46] plot [smooth, tension=0.5]
    (2,-2.3) to[out=0,in=200]
    (4.2,-2) to
    (5.4,1) to[out=180,in=290]
    (3.5,1.5) to
    (2,-2.3);
    
    \draw[thick,blue!80!white] (4.2,-2)--(5.4,1);
    
    \draw[very thick,blue!25!white] (2,-2.3)--(3.5,1.5);
    
    \fill[blue!80!white] 
    (4.2,-2) to[out=20,in=290] 
    (6,.5) to[out=110, in=0]
    (5.4,1);
    
    \draw[thick] 
    (7,4) to[out=190, in=30] 
    (3,3) to[out=210,in=35] 
    (1,1.5) to[out=215,in=45] 
    (-1.5,.3) to[out=225,in=150] 
    (-1,-1.5) to[out=330,in=180]
    (2,-2.3) to[out=0,in=200]
    (4.2,-2) to[out=20,in=290] 
    (6,.5) to[out=110, in=0]
    (5.4,1) to[out=180,in=290]
    (3.5,1.5) to[out=110, in=210]
    (4.5,2.8) to[out=30, in=200]
    (7,3.7);
    
    

    \draw plot [smooth, tension=0.5] coordinates {(-1,0) (1.5,-.6) (4,-.5) (5.5,0)};
    
    
    
    \filldraw (1.5,-.6) circle (1pt);
    \node[below] at (1.5,-.6) {$x$};
    \filldraw (4,-.5) circle (1pt);
    \node[below] at (4,-.5) {$h.x$};
\end{tikzpicture}
\caption{Shifting property}
\label{figure:shifty}
\end{figure}
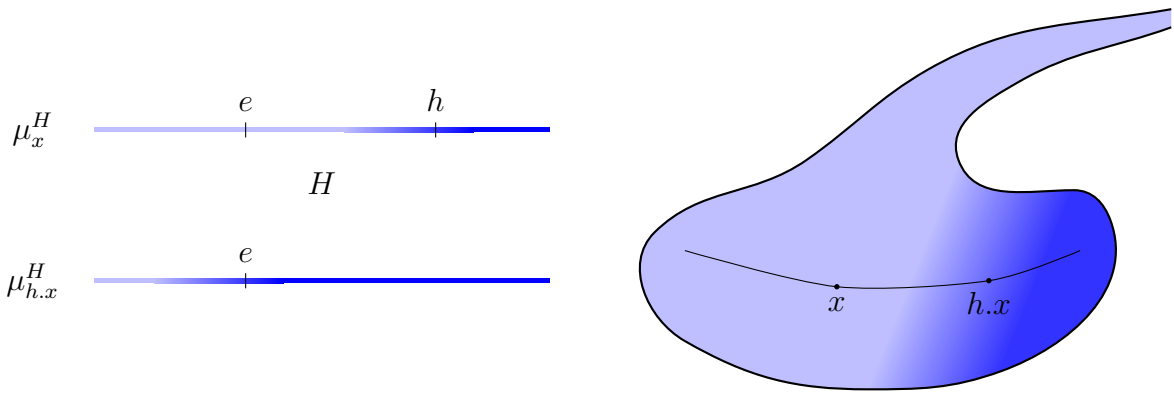


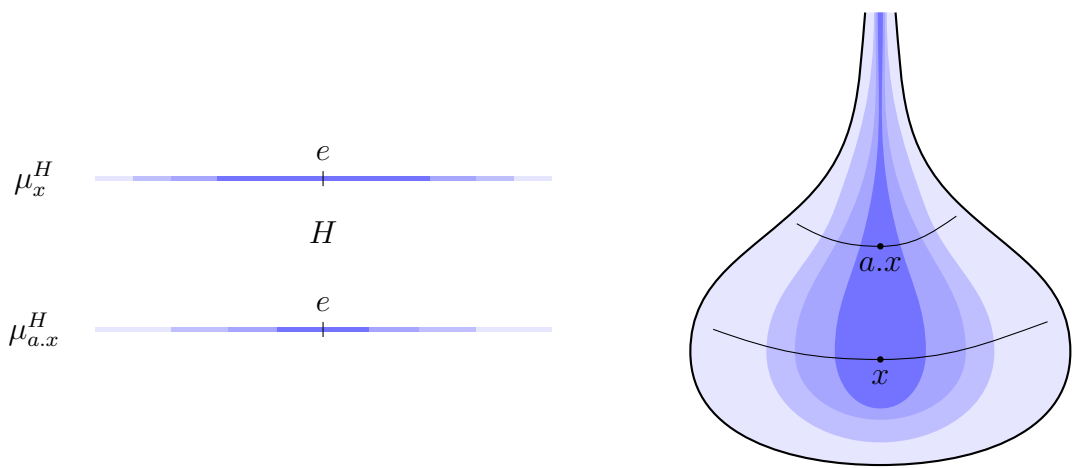
\begin{figure}[ht]
\centering
\begin{tikzpicture}[baseline=-80]
    \foreach \h/\d/\c in {1/3/10, 1/2.5/25, 1/2/35, 1/1.4/55, -1/3/10, -1/2/25, -1/1.25/35, -1/.6/55}
    {
    \fill[blue!\c!white] 
    (-\d,\h-.03) rectangle (\d,\h+.03);
    }
    
    \node at (-3.8,1) {$\mu_x^H$};
    \draw (0,1.1)--(0,.9);
    \node[above] at (0,1.1) {$e$};
    
    \node[above] at (0,0) {$H$};
    
    \node at (-3.8,-1) {$\mu_{a.x}^H$};
    \draw (0,-1.1)--(0,-.9);
    \node[above] at (0,-.9) {$e$};
\end{tikzpicture}
\hspace{40pt}
\tikzmath{
    \x1 = 0.2;
    \y1 = 6;
    \x2 = 1;
    \y2 = 3.5;
    \x3 = 2.5;
    \y3 = 1.5;
    \a = 45; 
    }
\begin{tikzpicture}
    
    \draw[thick,fill=blue!10!white]
    (-\x1,\y1) to [out=265, in=\a]
    (-\x2,\y2) to [out=\a+180, in=90]
    (-\x3,\y3) to [out=270, in=180]
    (0,0) to [out=0, in=270]
    (\x3,\y3) to [out=90,in=-\a]
    (\x2,\y2) to [out=180-\a, in=275]
    (\x1,\y1);
    
    
    
    \foreach \t/\aa/\l/\h/\c in {
    .6/70/.1/.3/25,
    .45/65/.3/.6/35,
    .24/75/.5/.75/55
    } 
    {
        \fill[blue!\c!white]
        (-\x1*\t*.7,\y1) to [out=270, in=\aa]
        (-\x2*\t,\y2-\l) to [out=\aa+180, in=90]
        (-\x3*\t,\y3) to [out=270, in=180]
        (0*\t,\h) to [out=0, in=270]
        (\x3*\t,\y3) to [out=90,in=-\aa]
        (\x2*\t,\y2-\l) to [out=180-\aa, in=270]
        (\x1*\t*.7,\y1);
    }
    
    \draw 
    (-2.2,1.8) to [out=-20, in=180] 
    (0,1.4) to [out=0, in=200]
    (2.2,1.9);
    \filldraw (0,1.4) circle (1pt);
    \node[below] at (0,1.4) {$x$};
    
    \draw
    (-1.1,3.2) to [out=-30, in=180]
    (0,2.9) to [out=0, in=215]
    (1,3.3);
    \filldraw (0,2.9) circle (1pt);
    \node[below] at (0,2.9) {$a.x$};
\end{tikzpicture}
\caption{Equivariance property}
\label{figure:equivy}
\end{figure}

    
 
 We note that the assumption \eqref{eq:injectivity of H} in Proposition  \ref{prop:LFMeasures} holds  when $H=U$ for an $a$-normalized subgroup $U<G_a^-$: if $u.x=x$ for some $u\in U$ then $aua^{-1}.\pa{a.x}={a.x}$. Since $U<G_a^-$ this shows that the injectivity radius of $a^n.x$ goes to zero. So either $u.x=x$ cannot hold (e.g.,~when $X$ is compact) or $x$ fails Poincaré recurrence (with respect to the measure that $a$ preserves) and therefore $x$ belongs to a null set.

\subsection{Entropy contribution and invariance}\label{subsec:ContAndInva}
We stated above that the entropy $h_\mu(a)$ is related to a volume decay property for the stable horospherical subgroup. We want now to use the notion of leafwise measures to define this more precisely, and use the same definition to quantify the entropy contribution  of ($a$-normalized) subgroups of the stable horospherical subgroup of $a$.

Consider the setting of Proposition \ref{prop:LFMeasures} with the additional assumptions in the \ref{item: equivary}-th item. We denote by $\theta_a\colon G\to G$ the conjugation map by $a$ and recall that  $B_1^H$ is the ball of radius one in $H$ around $e$. The \emph{volume decay entropy at $x$} is defined by 
\begin{equation}\label{eq:VolDef}
    \vol_\mu(a,H,x)=-\lim_{n\ra \infty} \frac{\log \mu_x^H\pa{\theta^n_a\pa{B_1^H}}}{n}. 
\end{equation}
This limit exists almost everywhere and should be thought of as a pointwise entropy contribution: we define its integral
\begin{equation}\label{eq:EntContDef}
    h_\mu(a,H)=\int_X\vol_\mu(a,H,x)\mathrm{d} \mu 
\end{equation}
to be the \emph{entropy contribution of $H$}.

\begin{prop}[Entropy as volume decay] \label{prop:EntAsVolume} Let $X=\Gamma\backslash G$ be an $S$-adic homogeneous space with a probability measure $\mu$ which is invariant under a diagonalizable element $a\in G$. Then $h_\mu(a)=h_\mu(a,G_a^-)$.
\end{prop}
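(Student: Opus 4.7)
The plan is to realize both sides as the integral of a single pointwise quantity via a suitably chosen $G_a^-$-subordinate $\sigma$-algebra $\cA$, and then invoke the Kolmogorov--Sinai theorem together with the characterizing property of leafwise measures. Concretely, I would construct a countably generated $\sigma$-algebra $\cA$ which is $G_a^-$-subordinate in the sense of Proposition~\ref{prop:LFMeasures}(i) and satisfies $a\cA \subseteq \cA$. The standard construction, going back to Margulis--Tomanov in the homogeneous setting, is to fix a small compact ``slab'' $Y$ inside a trivialization of the $G_a^-$-foliation together with a partition $\cP$ of small diameter whose walls are transverse to $G_a^-$, and to set $\cA = \bigvee_{n\ge 0} a^n\cP\vert_Y$. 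Contraction of $G_a^-$ by $a$ then ensures that the atoms $[x]_\cA = V_x.x$ are bounded pieces of single $G_a^-$-orbits and that the shape $V^{(1)}_x = \theta_{a^{-1}}(V_{a.x})$ of the refined atom $[x]_{a^{-1}\cA}$ is contained in $V_x$.

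Next, the characterizing property gives $\mu_x^\cA = \tfrac{1}{\mu_x^{G_a^-}(V_x)}\,\mu_x^{G_a^-}\vert_{V_x}.x$, hence
\begin{equation*}
H_\mu(a^{-1}\cA \mid \cA) \;=\; \int_X -\log \frac{\mu_x^{G_a^-}(V^{(1)}_x)}{\mu_x^{G_a^-}(V_x)}\, d\mu(x).
\end{equation*}
Iterating this identity, telescoping along $a$-orbits, and applying the equivariance formula $\mu_{a.x}^{G_a^-}\propto(\theta_a)_*\mu_x^{G_a^-}$ from Proposition~\ref{prop:LFMeasures}(iv), the right-hand side converts into $\int_X \vol_\mu(a,G_a^-,x)\, d\mu = h_\mu(a,G_a^-)$. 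The Birkhoff and Shannon--McMillan--Breiman theorems guarantee the pointwise limit defining $\vol_\mu$ exists $\mu$-a.e.\ and absorb into a coboundary the bounded differences between the moving shapes $V_x$ and the unit ball $B_1^{G_a^-}$.

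Finally, since $a\cA\subseteq \cA$, the quantity $H_\mu(a^{-1}\cA\mid\cA)$ equals the Kolmogorov--Sinai entropy $h_\mu(a,\cA)$, and $h_\mu(a,\cA) = h_\mu(a)$ as soon as $\bigvee_{n\in\bZ} a^n\cA$ separates points modulo $\mu$. Forward iterates of $\cA$ refine along $G_a^-$ by construction, while backward iterates $a^{-n}\cA$ refine transversally to $G_a^-$, because the complementary (unstable plus neutral) directions are contracted by $a^{-1}$ relative to the transverse structure of the slab $Y$; together the two-sided refinement recovers the full Borel $\sigma$-algebra up to $\mu$-null sets.

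The main obstacle is precisely this final separation step: verifying that no entropy is ``hidden'' in directions outside $G_a^-$, which amounts to a careful geometric construction of $\cA$ so that the atoms of $\bigvee_{n\ge 0} a^{-n}\cA$ genuinely degenerate to the $G_a^-$-leaves. In the present $S$-adic homogeneous setting this circle of ideas is classical (Margulis--Tomanov), but requires the transversal partition $\cP$ to be chosen compatibly with the Lie-algebraic decomposition of $\gog$ into $G_a^-$ and its complement. A secondary technical nuisance, arising already in the conditional-entropy computation, is controlling the proportionality constants relating $\mu_{a.x}^{G_a^-}$ to $(\theta_a)_*\mu_x^{G_a^-}$ so that they telescope harmlessly under iteration; this is ensured by the normalization $\mu_x^{G_a^-}(B_1^{G_a^-})=1$ together with the $a$-invariance of $\mu$.
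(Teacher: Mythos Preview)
The paper does not actually prove this proposition: it is stated without proof and attributed as a homogeneous-space variant of the Ledrappier--Young results, with the detailed argument deferred to \cite{PisaNotes}. Your outline is essentially the standard proof found there (see \cite[\S 7]{PisaNotes}): build an $a$-decreasing, $G_a^-$-subordinate $\sigma$-algebra $\cA$ from a carefully chosen partition, express the conditional information $H_\mu(a^{-1}\cA\mid\cA)$ through the characterizing property of leafwise measures, telescope using equivariance to recover the volume-decay integral, and finally show that $\cA$ is a two-sided generator so that $h_\mu(a,\cA)=h_\mu(a)$. Your identification of the two genuine technical points---that two-sided generation requires the transverse (unstable and neutral) directions to be separated by backward refinements, and that the proportionality constants in the equivariance formula must be shown to contribute only a coboundary---is accurate, and both are resolved in \cite{PisaNotes} along exactly the lines you sketch. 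One small caution: in the $S$-adic (non-compact) setting the construction of a partition with the required subordination and finite-entropy properties is slightly more involved than the ``slab'' picture suggests (one typically needs to control the injectivity radius on a set of large measure and handle recurrence to it), but this is again standard and treated in the same reference.
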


The volume decay with respect to the Haar measure $m_X$ is a local quantity that can be directly calculated using the adjoint map on the Lie algebra. For example, 
$$ \vol_{m_X}(a,G_a^-,x)=-\log\av{\det \Ad_a|_{\Lie(G_a^-)}},\quad \text{ for every } x\in X.
$$

There are many measure-preserving systems where the uniform (e.g.~Haar) measure is the unique measure of maximal entropy. For many $S$-arithmetic quotients this fact can be deduced from the following theorem:
\begin{theo} \label{thm:EntropyDetEW}
With the assumptions as in Proposition \ref{prop:EntAsVolume}, let $U<G_a^-$ be an $a$-normalized subgroup (e.g.~a coarse Lyapunov subgroup). We have
\begin{equation}\label{eq:boundEntCont}
h_\mu(a,U)\leq -\log\av{\det \Ad_a|_{\Lie(U)}}=h_{m_X}(a,U)    
\end{equation}
with equality if and only if $\mu$ is $U$-invariant.
\end{theo}
In other words, $\mu$ is $U$-invariant if and only if the entropy contribution of $U$ with respect to $\mu$ agrees with the entropy contribution with respect to the Haar measure. One should keep this in mind together with the previous characterization we had for invariance in Proposition \ref{prop:LFMeasures}\eqref{item:Invariance}.

This theorem together with the decomposition of $\Lie(G_a^-)$ in \eqref{eq:StableHoroDecompostion} and the ability to define the entropy contribution for any coarse Lyapunov subgroup raises the question of understanding the relations between $\mu_x^{G_a^-}$ and $\mu_x^{G^{[\lambda]}}$ for the various coarse Lyapunov subgroups $G^{[\lambda]}< G_a^-$. \textcite{EinsiedlerKatok} show that this relation is quite simple, as we explain in the next subsection. We remark that Proposition \ref{prop:EntAsVolume} and the results of the next subsection are variations of preceding results of \textcite{LedYoungI,LedYoungII} for $C^2$-diffeomorphism of compact manifolds.

\subsection{The product structure}\label{subsec:ProductStr}
The following was proven by \textcite[Theorem 8.4]{EinsiedlerKatok}: let $U=G_a^-$ for some diagonalizable element $a$ that preserves a probability measure $\mu$ on an $S$-arithmetic homogeneous space $X$. Let $G^{[\lambda_1]},\dots,G^{[\lambda_k]}$ denote all coarse Lyapunov subgroups and for a weight $\lambda$, and $\mu_x^{[\lambda]}$ denote the leafwise measure $\mu_x^{G^{[\lambda]}}$. Then, the map 
$$\iota\colon \prod_{i=1}^k G^{[\lambda_i]}\to U,\,\iota(u_1,\dots,u_k)=u_1\dots u_k
$$
is a homeomorphism and for almost all $x\in X$ we have the so-called “product structure”:
\begin{equation}\label{eq:ProdStructure}
\mu_x^U\propto \iota_*\pa{\mu_x^{[\lambda_1]}\otimes\dots\otimes \mu_x^{[\lambda_k]}}.    
\end{equation}
This readily implies the following addition property for entropy contribution:
\begin{equation}\label{eq:entContAddtion}
h_\mu(a,G_a^-)=\sum_{i=1}^k h_\mu(a,G^{[\lambda_i]}).
\end{equation}

\subsection{Abramov--Rokhlin entropy addition formula}\label{Subsec:AbrahmovRokhlin}
The joinings we consider (as in Theorem \ref{thm:mainThm})  have natural factors, both of which are equipped with the corresponding Haar measure. This should imply that we have $h_a(\mu)>0$ for any non-trivial $a\in A$. The Abramov--Rokhlin enables to show this and to calculate the entropy in a step-wise manner through the factors. To state the formula, we shortly recall the definition of conditional entropy.

Given a Borel probability space $(X,\cB,\mu)$ and two $\sigma$-algebras $\cA,\cC\subset \cB$ with $\cC$ countably generated, one defines the \emph{information function on $\cC$ given $\cA$} as 
$$
I_\mu(\cC|\cA)(x)=-\log\mu_x^\cA\pa{[x]_{\cC}},
$$
and \emph{the conditional entropy of $\cC$ given $\cA$}, $H_\mu(\cC|\cA)$, as the integral of the information function 
$$
H_\mu(\cC|\cA)=\int_X I_\mu(\cC|\cA)(x)\mathrm{d}\mu.
$$
One should think about this conditional entropy as measuring how much new information $\cC$ has, given the information that we already have from $\cA$. To define the conditional entropy of a measure-preserving transformation $T\colon X\ra X$, one defines
$$
h_\mu(T|\cA)=\sup_{\eta:H_\mu(\eta)<\infty} h_\mu(T,\eta|\cA)
$$
with 
$$
h_\mu(T,\eta|\cA)=\lim_{n\to\infty} \frac{1}{n}H_\mu(\eta_0^{n-1}|\cA).
$$

With these definitions, using the setting and the notation from  subsection \ref{subsec:settings}, the Abramov--Rokhlin entropy addition formula states for a diagonalizable element $a=(a_1,a_2)$ that 
\begin{equation}\label{eq:usualAbrahmovRohk}
    h_\mu(a)=h_{m_{X_1}}(a_1)+h_\mu \pa{a|\pi_1^{-1}\cB_{X_1}},
\end{equation}
where $\pi_1:X\to X_1$ is the natural projection map and $\cB_{X_1}$ is the Borel $\sigma$-algebra on $X_1$. By symmetry of the assumptions, the same holds with $1$ and $2$ interchanged.

\begin{rema}\label{rem:positiveEnt}
It follows directly from \eqref{eq:usualAbrahmovRohk} that any non-trivial $a\in A$ has positive entropy.
\end{rema}

\subsection{An entropy inequality}
One of the main ideas of \textcite{EL_Joinings2007} is that Abramov--Rokhlin formula \eqref{eq:usualAbrahmovRohk} may be coupled with 
\eqref{eq:entContAddtion} to get an “Abramov--Rokhlin type formula/inequality for the entropy contribution”.  In other words, a main step of the proof of the joining classification is the following inequality, which shows that the entropy contribution with respect to a joining can be bounded in a stepwise manner using the natural factors:
\begin{lemm}\label{lem:Inequality}
With the setting of \S \ref{subsec:settings} let $a=(a_1,a_2)$ be a diagonalizable element with $a_i,\,i=1,2$ acting ergodically on $X_i=\Gamma_i\backslash G_i$ equipped with the Haar measure $m_{X_i}$,  let $U_1<(G_1)_{a_1}^-$ and $U_2<(G_2)_{a_2}^-$ be two subgroups, and set $U=U_1\times U_2<G$. Assume that $U$ is $a$-normalized and let $\mu$ be  a joining on $X$ (that is, an $a$-invariant measure with $\pi_i(\mu)=m_{X_i},\,i=1,2$). Then, we have 
\begin{equation}\label{eq:EntInEq}
h_\mu(a,U)\leq h_{m_{X_1}}(a_1,U_1)+h_\mu(a,\set{e}\times U_2).    
\end{equation}
Moreover, for the stable horospherical subgroup $U=G_a^-$ (that is, for $U_i=(G_i)_{a_i}^-,\,i=1,2$) we have equality in \eqref{eq:EntInEq}.
\end{lemm}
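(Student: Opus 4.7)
The approach is to compute each side of~\eqref{eq:EntInEq} via a carefully chosen $U$-subordinate $\sigma$-algebra that is compatible with the projection $\pi_1\colon X\to X_1$. Specifically, I would construct a countably generated $\sigma$-algebra $\cA$ on $X$ which is $U$-subordinate, $a$-refining (so $\theta_a\cA\supseteq \cA$), and which contains as a sub-$\sigma$-algebra $\cA^{(1)}:=\pi_1^{-1}\cA_1$ for some $U_1$-subordinate, $a_1$-refining $\sigma$-algebra $\cA_1$ on $X_1$. Such a construction is standard in the area and parallels the one used implicitly in Proposition~\ref{prop:EntAsVolume}: one fixes a set $Y\subseteq X$ of large measure, chooses a Borel partition of $Y$ whose atoms are small $U$-plaques, and arranges that the plaques project under $\pi_1$ to $U_1$-plaques on $X_1$.

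Once $\cA$ is in hand, entropy contributions are expressed (cf.\ the identification in Proposition~\ref{prop:EntAsVolume}) as
$$h_\mu(a,U)=\lim_{n\to\infty}\frac{1}{n}H_\mu\pa{\theta_a^n\cA\mid \cA},\qquad h_{m_{X_1}}(a_1,U_1)=\lim_{n\to\infty}\frac{1}{n}H_{m_{X_1}}\pa{\theta_{a_1}^n\cA_1\mid \cA_1}.$$
Since $\cA^{(1)}\subseteq \cA$, the chain rule for conditional entropy, monotonicity, and the joining property $\pa{\pi_1}_*\mu=m_{X_1}$ give
$$H_\mu\pa{\theta_a^n\cA\mid \cA}\leq H_{m_{X_1}}\pa{\theta_{a_1}^n\cA_1\mid \cA_1}+H_\mu\pa{\theta_a^n\cA\mid \theta_a^n\cA^{(1)}\vee \cA}.$$
Dividing by $n$ and letting $n\to\infty$, the first summand contributes exactly $h_{m_{X_1}}(a_1,U_1)$. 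For the second summand, conditioning on $\theta_a^n\cA^{(1)}$ fixes the $X_1$-information at scale $n$, so within each $\pi_1$-fibre the only remaining freedom lies in the $\set{e}\times U_2$-direction; the limit is identified with $h_\mu(a,\set{e}\times U_2)$ by recognising the conditional measure of the $U$-subordinate partition as the leafwise measure $\mu_x^{\set{e}\times U_2}$, via the characterising property of Proposition~\ref{prop:LFMeasures} together with the shifting formula~\eqref{eq:shifty}.

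For the equality when $U=G_a^-$, Proposition~\ref{prop:EntAsVolume} gives $h_\mu(a,G_a^-)=h_\mu(a)$ and $h_{m_{X_1}}\pa{a_1,(G_1)_{a_1}^-}=h_{m_{X_1}}(a_1)$, so the Abramov--Rokhlin formula~\eqref{eq:usualAbrahmovRohk} reduces the claim to the conditional analog of Proposition~\ref{prop:EntAsVolume}, namely
$$h_\mu\pa{a\mid \pi_1^{-1}\cB_{X_1}}=h_\mu\pa{a,\set{e}\times (G_2)_{a_2}^-},$$
which is proved by disintegrating $\mu$ along $\pi_1^{-1}\cB_{X_1}$ and running the volume-decay argument fibrewise. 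The main obstacle, common to both halves of the proof, is precisely this identification of conditional measures of $U$-subordinate partitions with leafwise measures in the $\set{e}\times U_2$-direction; establishing it rigorously, using Proposition~\ref{prop:LFMeasures}, is the technical heart of the Abramov--Rokhlin-type formula for entropy contribution and is exactly what upgrades~\eqref{eq:EntInEq} to an equality when $U=G_a^-$.
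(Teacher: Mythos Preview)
Your approach is essentially the same as the paper's: the paper does not spell out a proof but notes that it proceeds via ``tailor-made partitions which are subordinate to the stable horospherical subgroups $(G_i)_{a_i}^-$ and to the subgroups $U_i$'', referring to \textcite[Prop.~3.1]{EL_Joinings2007}, which is exactly the scheme you outline (a $U$-subordinate, $a$-increasing $\sigma$-algebra compatible with $\pi_1$, followed by the conditional-entropy chain rule and Abramov--Rokhlin for the equality case). One minor notational slip: $\theta_a$ in the paper denotes conjugation on the group, so writing $\theta_a^n\cA$ for a $\sigma$-algebra on $X$ is off; you mean the pullback $a^{-n}\cA$ under the $a$-action on $X$.
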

The same results of course hold with $1$ and $2$ interchanged, by the symmetry of the assumptions of this lemma.
The proof of this lemma uses tailor-made partitions which are subordinate to the stable horospherical subgroups $(G_i)_{a_i}^-$ and to the subgroups $U_i$ (see \cite[Prop. 3.1]{EL_Joinings2007}).

\subsection{Equality for coarse Lyapunov weights}\label{Subsec:AbrohmovRokhlinForLya}
Lemma~\ref{lem:Inequality} together with the product structure \eqref{eq:ProdStructure} implies that  we also have equality in \eqref{eq:EntInEq} for  a coarse Lyapunov subgroup $U$:

\begin{prop}[Abramov--Rohklin for Coarse Lyapunov subgroups]\label{Prop:ABraRokhForCoarse} With the setting of Lemma~\ref{lem:Inequality}  the following equality holds for any coarse Lyapunov subgroup $G^{[\lambda]}=G_1^{[\lambda]}\times G_2^{[\lambda]}$ 
\begin{equation}\label{eq:AbRohForEntCont}
    h_\mu(a,G^{[\lambda]})=h_{m_{X_1}}(a_1,G_1^{[\lambda]})+h_\mu \pa{a,\set{e}\times G_2^{[\lambda]}}.
\end{equation}  By symmetry, the same statement holds with $1$ and $2$ interchanged.
\end{prop}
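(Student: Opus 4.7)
The plan is to bootstrap from the coarse Lyapunov inequality supplied by Lemma~\ref{lem:Inequality} to the desired equality, by summing over all coarse Lyapunov weights and comparing to the full stable horospherical case, where Lemma~\ref{lem:Inequality} already delivers equality. This is the classical \enquote{equality in a sum of inequalities forces equality termwise} trick, and the new content lies in bringing three invocations of the product structure \eqref{eq:ProdStructure} into play simultaneously.

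First, fix $a=(a_1,a_2)$ as in Lemma~\ref{lem:Inequality}. It suffices to prove the identity for coarse Lyapunov weights $[\lambda]$ with $\lambda(a)<0$; the remaining cases follow by replacing $a$ with a suitable element of $A$ for which the corresponding weight becomes negative. Under this assumption, $G_i^{[\lambda]}\subset (G_i)_{a_i}^-$ for $i=1,2$, and the product $G^{[\lambda]}=G_1^{[\lambda]}\times G_2^{[\lambda]}$ is $a$-normalized. Applying Lemma~\ref{lem:Inequality} with $U_i=G_i^{[\lambda]}$ yields
$$
h_\mu(a, G^{[\lambda]}) \;\leq\; h_{m_{X_1}}(a_1, G_1^{[\lambda]}) + h_\mu\pa{a, \set{e}\times G_2^{[\lambda]}}. \qquad (\star_{[\lambda]})
$$

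Next, sum $(\star_{[\lambda]})$ over all coarse Lyapunov weights $[\lambda]$ with $\lambda(a)<0$. The additivity \eqref{eq:entContAddtion}, which is a direct consequence of the Einsiedler--Katok product structure \eqref{eq:ProdStructure}, collapses each of the three resulting sums: the left-hand side sums to $h_\mu(a, G_a^-)$; the first right-hand summand sums to $h_{m_{X_1}}(a_1, (G_1)_{a_1}^-)$ since $(G_1)_{a_1}^-=\prod_{[\lambda]}G_1^{[\lambda]}$; and the second sums to $h_\mu(a, \set{e}\times (G_2)_{a_2}^-)$ upon applying \eqref{eq:ProdStructure} to the $a$-normalized subgroup $\set{e}\times (G_2)_{a_2}^-$ of $G_a^-$, whose coarse Lyapunov decomposition is $\prod_{[\lambda]}\set{e}\times G_2^{[\lambda]}$. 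We therefore obtain the summed inequality
$$
h_\mu(a, G_a^-) \;\leq\; h_{m_{X_1}}(a_1, (G_1)_{a_1}^-) + h_\mu\pa{a, \set{e}\times (G_2)_{a_2}^-}.
$$
But the equality clause of Lemma~\ref{lem:Inequality}, applied to $U=G_a^-$, states that this last relation is in fact an \emph{equality}. Since every $(\star_{[\lambda]})$ is a non-strict inequality and their sum is an equality, each $(\star_{[\lambda]})$ must individually be an equality, which is precisely \eqref{eq:AbRohForEntCont}. The symmetric statement (with $1$ and $2$ interchanged) follows identically.

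The main obstacle is the third application of the product structure, namely the additivity of the entropy contribution for the subgroup $\set{e}\times (G_2)_{a_2}^-$ of $G_a^-$ on $(X,\mu)$. This is a slight generalization of \eqref{eq:entContAddtion}, which as stated applies to the full $G_a^-$; however, Einsiedler--Katok's argument yields the product formula for the leafwise measures of any $a$-normalized subgroup of $G_a^-$ that is itself a product of coarse Lyapunov pieces, so this step is essentially automatic. Everything else is bookkeeping.
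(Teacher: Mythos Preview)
Your proposal is correct and follows the same strategy as the paper: apply Lemma~\ref{lem:Inequality} to each coarse Lyapunov subgroup of $G_a^-$, sum using the additivity \eqref{eq:entContAddtion} from the product structure, and compare with the equality case of Lemma~\ref{lem:Inequality} for $U=G_a^-$ to force each termwise inequality to be an equality. Your remark about the third application of the product structure (to $\set{e}\times(G_2)_{a_2}^-$) is a point the paper leaves implicit, and your handling of coarse weights not contracted by the given $a$ by passing to another element of $A$ matches the paper's choice of $a$ depending on $[\lambda]$.
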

\begin{proof}
  This is just  simple arithmetic, as the quantities in \eqref{eq:AbRohForEntCont} are additive with respect to a product of measures, and since we have equality for the stable horospherical and an inequality for each coarse Lyapunov weight. More precisely, let $G^{[\lambda]}$ be a Lyapunov weight and choose $a\in G$ such that $G^{[\lambda]}<G_a^-$. Let $G_a^-=G^{[\lambda_1]}\dots G^{[\lambda_k]}$ be the corresponding decomposition of the stable horospherical to coarse Lyapunov subgroups (with, say, $\lambda=\lambda_1$). For each $1\leq \ell\leq k$ we have by Lemma~\ref{lem:Inequality}  
  $$
  h_\mu(a,G^{[\lambda_\ell]})\leq h_{m_{X_1}}(a_1,G_1^{[\lambda_\ell]})+h_\mu \pa{a,\set{e}\times G_2^{[\lambda_\ell]}}.
  $$
  By the product structure \eqref{eq:ProdStructure} (applied to $\mu$ and also to $m_{X_1}$) and the definition of entropy contribution the sum of each term over $\ell=1,\dots k$, sums up to the corresponding term for $G_a^-$. It follows that
  \begin{align}\label{summingUpBegin}
  h_\mu(a,G_a^-) &= \sum_{\ell=1}^k h_\mu(a,G^{[\lambda_\ell]})\\
  &\leq \sum_{\ell=1}^k h_{m_{X_1}}(a_1,G_1^{[\lambda_\ell]})+ \sum_{\ell=1}^k h_\mu \pa{a,\set{e}\times G_2^{[\lambda_\ell]}} \\ &= h_{m_{X_1}}(a_1,(G_1)_{a_1}^-)+h_\mu \pa{a,\set{e}\times (G_2)_{a_2}^{-}}.\label{summingUpEND}
\end{align}
But we know from Lemma~\ref{lem:Inequality} that the left-hand side of \eqref{summingUpBegin} is equal to the right-hand side of \eqref{summingUpEND}, so all the above inequalities must be equalities.
\end{proof}

\section{Two corollaries}\label{sec:twoCoro}
We can prove now two corollaries of the equality \eqref{eq:AbRohForEntCont}. This shows that this equality already contains substantial content -- that is the reason we called \eqref{eq:AbRohForEntCont} and the following corollaries the “second ingredient” in the overview in \S \ref{sec:birdsAndIngredients}.

The first corollary is in particular interesting and can be referred to as “the two ingredients joinings theorem”: it allows for a complete classification of joinings in some situations, e.g., for Example \ref{exa:SL2Squared}\eqref{exa:SL245Rotation}). Also, it (or more precisely, Remark \ref{rema:NTtimesTrivial}) will be used again and again in various stages of the proof of Theorem \ref{Thm:SL3Classification} and in \S \ref{sec:Rank2}. Moreover, this corollary and its proof explain what we are looking for (utilizing entropy methods to gain an unipotent invariance) and also what we cannot hope for (finding unipotent invariance which is trivial in one of the factors). We give more details below.
\subsection{Disjointness due to different Lyapunov weights}\label{subsec:DifferentWeights}
It may happen that the root structure of $G_1$ and $G_2$ are different enough, so that for some Lyapunov weight $\lambda$, one of the coarse Lyapunov subgroups  $G_1^{[\lambda]}$ or  $G_2^{[\lambda]}$ is trivial. This immediately  “decouples” the joining:
\begin{coro}[Two ingredients joinings theorem]\label{cor:differntRootDisj}
With the setting of \S \ref{subsec:settings} let $\mu$ be a joining and assume that there is a Lyapunov weight $\lambda$ such that (say) $G^{[\lambda]}=G_1^{[\lambda]}\times \set{e}$. Then $\mu$ is the trivial joining.
\end{coro}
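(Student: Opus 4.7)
The plan is to combine the Abramov--Rokhlin-type equality \eqref{eq:AbRohForEntCont} from Proposition \ref{Prop:ABraRokhForCoarse} with the equality clause of Theorem \ref{thm:EntropyDetEW} in order to extract invariance of $\mu$ under the unipotent subgroup $G^{[\lambda]}$, and then to invoke measure rigidity to upgrade this invariance to triviality of the joining.

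For the first step I would pick $a\in A$ with $G^{[\lambda]}\subset G_a^-$ and apply \eqref{eq:AbRohForEntCont} to this $\lambda$. The hypothesis $G_2^{[\lambda]}=\set{e}$ kills the last summand, leaving
$$h_\mu\pa{a,G^{[\lambda]}}=h_{m_{X_1}}\pa{a_1,G_1^{[\lambda]}}.$$
The explicit formula for the Haar-measure volume decay recorded right after Proposition \ref{prop:EntAsVolume} shows that the right-hand side equals $-\log\av{\det\Ad_{a_1}|_{\Lie G_1^{[\lambda]}}}$, and since $\Lie(G^{[\lambda]})=\Lie(G_1^{[\lambda]})\times \set{0}$, this is the same as $-\log\av{\det\Ad_{a}|_{\Lie G^{[\lambda]}}}=h_{m_X}\pa{a,G^{[\lambda]}}$. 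We have therefore hit the upper bound \eqref{eq:boundEntCont} for $U=G^{[\lambda]}$, and the rigidity clause of Theorem \ref{thm:EntropyDetEW} forces $\mu$ to be invariant under the non-trivial unipotent subgroup $G^{[\lambda]}$. This is a genuine gain over the bare $A$-invariance: the only use of the joining hypothesis was to make the second summand in \eqref{eq:AbRohForEntCont} vanish, and the only use of the weight hypothesis was that the joining thereby saturates the volume decay of Haar on $G^{[\lambda]}$.

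The main obstacle is the second step: converting invariance under a single coarse Lyapunov subgroup of the form $G_1^{[\lambda]}\times\set{e}$ into the full statement $\mu=m_{X_1}\otimes m_{X_2}$. Here the survey's basic ingredients are not enough on their own; one invokes the measure-classification theorems of Ratner and their $S$-adic extensions due to \textcite{RatnerSadic95} and \textcite{MargulisTomanov94}, which together with the $A$-ergodicity of $\mu$ imply that $\mu$ is algebraic, say $\mu=m_{\Ga Lg}$ for some closed subgroup $L\leq G$ containing $G^{[\lambda]}$. The joining condition $(\pi_i)_*\mu=m_{X_i}$ forces each projection $L\to G_i$ to be onto a finite-index subgroup, and a Goursat-type analysis exploiting the perfectness of the $\bG_i$ and the non-triviality of $G_1^{[\lambda]}\times\set{e}\subset L\cap\pa{G_1\times\set{e}}$ then forces $L$ to be essentially all of $G$, so that $\mu=m_{X_1}\otimes m_{X_2}$ as claimed.
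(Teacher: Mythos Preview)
Your first step is identical to the paper's: the Abramov--Rokhlin equality \eqref{eq:AbRohForEntCont} collapses to $h_\mu(a,G^{[\lambda]})=h_{m_{X_1}}(a_1,G_1^{[\lambda]})=h_{m_X}(a,G^{[\lambda]})$, and the equality case of Theorem~\ref{thm:EntropyDetEW} yields invariance of $\mu$ under $G^{[\lambda]}=G_1^{[\lambda]}\times\set{e}$.

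Where you diverge is in the second step. The paper does \emph{not} invoke Ratner or a Goursat analysis. It observes instead that $G_1^{[\lambda]}$ is an unbounded subgroup of the semisimple group $G_1$, so by Howe--Moore (equivalently Moore's ergodicity theorem, using that $\Gamma_1$ is irreducible) one can pick $u\in G_1^{[\lambda]}$ acting ergodically on $(X_1,m_{X_1})$. Then invariance of $\mu$ under $(u,e)$ forces $\mu=m_{X_1}\otimes m_{X_2}$ by an elementary disintegration argument: the fibre measures of $\mu$ over $\pi_2$ are $u$-invariant on $X_1$ and average to $m_{X_1}$, and extremality of the ergodic measure $m_{X_1}$ among $u$-invariant probabilities makes almost every fibre measure equal to $m_{X_1}$. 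This is the content of the cited \textcite[Lemma~7.2]{AkaMussoWieser} and of Remark~\ref{rema:NTtimesTrivial}.

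Your Ratner-plus-Goursat route can be made to work, but it is much heavier than needed, it presupposes $A$-ergodicity of $\mu$ (which the corollary does not assume, so you would first have to pass to ergodic components), and the Goursat step requires extra care when $\bG_1$ has several $\bQ$-simple factors: normality of $L\cap(G_1\times\set{e})$ in $G_1$ only tells you it is a product of some of those factors, not automatically all of $G_1$. The whole point of calling this the ``two ingredients'' corollary is precisely that the measure-rigidity machinery is unnecessary here.
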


\begin{proof}
   For the coarse Lyapunov weight $[\lambda]$ we have $G_2^{[\lambda]}=\set{e}$ and since the entropy contribution of the trivial group is zero, \eqref{eq:AbRohForEntCont} reads as follows
  \begin{equation}
  h_\mu(a,G^{[\lambda]})=h_{m_{X_1}}(a_1,G_1^{[\lambda]})+h_\mu \pa{a,\set{e}\times G_2^{[\lambda]}}=h_{m_{X_1}}(a_1,G_1^{[\lambda]}).    
  \end{equation}
 But 
 \begin{align*}
 h_{m_{X_1}}\pa{a_1,G_1^{[\lambda]}}&= -\log\av{\det \Ad_{a_1}|_{\Lie\pa{G_1^{[\lambda]}}}}\\
 &=-\log\av{\det \Ad_a|_{\Lie\pa{G_1^{[\lambda])}\times \set{e}}}}&=h_{m_{X}}\pa{a,G_1^{[\lambda]}\times \set{e})},    
 \end{align*}
 so Theorem \ref{thm:EntropyDetEW} then implies that $\mu$ is invariant under
 the unipotent group $G_1^{[\lambda]}\times \set{e}$. Recall that an unbounded
 closed subgroup such as $G_1$  acts ergodically on $X_1$ (see e.g.~\cite[Theorem
 2.2.6]{ZimmerBook}). Therefore, we can find an element of the form $(u,e)$
 with $u$ acting ergodically on $X_1$ that preserves $\mu$.  This immediately
 implies disjointness, that is, $\mu$ must be the trivial joining (see e.g.\ \cite[Lemma~7.2]{AkaMussoWieser}).
\end{proof}
\begin{rema}\label{rema:NTtimesTrivial}
We record the following useful fact from the above proof: having invariance under an element of the form $(u,e)$ with $\langle u\rangle$ unbounded, immediately implies disjointness.  
\end{rema}

\begin{exem}\label{ex:Exp45RotationDisj} Consider Example \ref{exa:SL2Squared}\eqref{exa:SL245Rotation}, that is, the “$45^\circ$-rotation” torus action given by $(\phi,\psi)$. As we explained there, with this torus action, all the eight coarse Lyapunov weights have the form for which Corollary \ref{cor:differntRootDisj} is applicable. Therefore, in this setting, the only possible joining is the trivial joining. We remark that a $p$-adic variant of exactly this situation arises in the arithmetic application considered in \textcite{AEW21}. See Example \ref{exa:2in4} for more details. It is interesting to remark that the statement of Corollary \ref{cor:differntRootDisj} is not explicitly visible in the statement of Theorem \ref{thm:mainThm}; it is nevertheless there: any non-trivial joining will be related to a local isomorphism between $\bG_1$ and $\bG_2$. Existence of such an isomorphism implies that the non-trivial weights for $G_1$ and $G_2$ must be identical, and therefore rules out the possibility of finding a coarse Lyapunov subgroup for which Corollary \ref{cor:differntRootDisj} is applicable.
\end{exem}



The proof of Corollary \ref{cor:differntRootDisj} hints on how we aim to proceed in general: we aim to utilize entropy considerations to establish that a certain coarse Lyapunov subgroup (which is automatically an unipotent subgroup) has maximal entropy contribution and therefore leaves the joining invariant. Once we establish an unipotent invariance, we can utilize measure rigidity results for unipotent flows as shortly described in \S \ref{subsec:settings}. But in general, this unipotent need not be trivial in one of the factors.  here are two  examples that the reader should keep in mind: 
\begin{exem}\label{ex:DiagoanlJoinings}
When we consider possible joinings for with $G_1=G_2=G$ and $\Gamma_1=\Gamma_2=\Gamma$, and with identical action, we always have the so-called “diagonal-joining”: the push-forward of $m_{\Gamma\backslash G}$ under $\iota\colon G\to  G\times G,\,\iota(g)=(g,g)$. This fits with the fact that Corollary \ref{cor:differntRootDisj} is not applicable, as the weights for $G_1$ and $G_2$ are identical so all the (coarse) Lyapunov subgroups a product of \emph{non-trivial} weight spaces.  More generally, if $\Psi\colon\bG_1\to\bG_2$ is a $\bQ$-algebraic isomorphism, then the push-forward $m_{X_1}$ under $\iota\colon G_1\to G_1\times G_2,\, g\mapsto (g,\Psi(g))$ is also a non-trivial joining which is supported on the graph of the isomorphism $\Psi$. In the case where $G_1$ and $G_2$ are simple $\bQ$-groups, Theorem \ref{thm:mainThm} implies that such joinings are the only possible non-trivial joinings. For a concrete example, see Example \ref{exa:SL3} with $\Gamma_1=\Gamma_2$. We will prove a classification of all (ergodic) joinings for  Example \ref{exa:SL3} in  \S \ref{sec:highRank}.  
\end{exem}

 On the other hand, it may well happen that all the coarse Lyapunov subgroups are product of non-trivial weight spaces, but nevertheless the only possible joining is the trivial one:  

\begin{exem}
  Consider Example \ref{exa:SL2Squared}\eqref{exa:SL2DifferentSpeeds}, i.e.~the “different speeds” action $(\phi,\tau)$. Here, there are also only four weights and all the \emph{coarse} Lyapunov subgroups are a product of non-trivial weight spaces. Nevertheless, it follows from Theorem \ref{thm:mainThm} that ergodic $(\phi,\tau)$-invariant joinings must be trivial: any non-trivial joining must arise from an algebraic automorphism $\Psi$ of $\bG_1=\SL_2\times \SL_2$ such that the image $\bZ^2$ under $(\phi,\tau)$ belongs to $\set{\pa{g,\Psi(g):g\in G}}$. But such an automorphism does not exist. It is very interesting to see how this arises in the proof, especially when one compares it to Example \ref{exa:SL2Squared}\eqref{exa:SL2IdenticalWeights}: we  discuss this in \S \ref{subsec:ClassInEx1} and \S \ref{subsec:Ex2Classi}.  
\end{exem}

\subsection{Support of the leafwise measures}
Our ultimate goal is to show that a joining $\mu$ admits some unipotent invariance, for example, under one of the coarse Lyapunov subgroups $G^{[\Lambda]}$, or a non-trivial subgroup of it. By Theorem \ref{thm:EntropyDetEW} this is equivalent to showing that $G^{[\Lambda]}$ (or a non-trivial subgroup of it) has maximal entropy contribution, and by Proposition \ref{prop:LFMeasures}\eqref{item:Invariance} this is also equivalent to showing that $\mu_x^{[\Lambda]}$ equals the Haar measure $m_{G^{[\Lambda]}}$ almost everywhere (or the Haar measure on a non-trivial subgroup of it). In this subsection, we show that a much weaker statement follows from \eqref{eq:AbRohForEntCont}, namely that the support of $\mu_x^{[\Lambda]}$ is large (at least once projected, see below) almost everywhere. In the next sections, we will try to use further methods/ingredients (the low and high entropy methods) to upgrade this partial information about the support to $\mu_x^{[\Lambda]}$ to a statement that will imply invariance under $U^{[\Lambda]}$ (or one of its non-trivial subgroups).

In general, we cannot expect $\mu_x^{[\Lambda]}$ to have full support. For instance, in  the “identical weights” Example  \ref{exa:SL2Squared}\eqref{exa:SL2IdenticalWeights}, if $\mu$ is the diagonal joining induced by $\iota(g)=(g,g)$, then  $\mu_x^{[\Lambda]}$ is supported on
$$
\set{\iota(u):u\in G_1^{[\Lambda]}}\subset G_1^{[\Lambda]}\times G_2^{[\Lambda]}.
$$
But in general, we might hope (since having a joining should imply this fact) that the support will be large once projected to each of the factors. This is indeed the case: recall that by Proposition \ref{Prop:ABraRokhForCoarse} we have equality in \eqref{eq:EntInEq} for any coarse Lyapunov subgroup. This has the following consequence. 

\begin{coro}\label{cor:equalityImpOnto}
In the setting of Lemma~\ref{lem:Inequality} assume that equality holds in \eqref{eq:EntInEq} for $U=U_1\times U_2<G$. Then, $U_1$ is the smallest connected $a_1$-normalized subgroup of $G_1$ containing $\pi_1\pa{\supp{\mu_x^U}}$ for almost every $x\in X$.
\end{coro}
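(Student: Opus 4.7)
The plan is to assume for contradiction that there is a proper connected $a_1$-normalized subgroup $V_1 \subsetneq U_1$ such that $\pi_1(\supp(\mu_x^U)) \subseteq V_1$ for $\mu$-a.e.~$x$, and to derive a contradiction with the equality in \eqref{eq:EntInEq}. Well-definedness of ``smallest such $V_1$'' can be arranged by intersecting the closed connected $a_1$-normalized Lie subgroups of $U_1$ whose $\mu$-a.e.\ preimage-support condition holds, and invoking the $a$-equivariance \eqref{eq:equivariancy} of leafwise measures (together with an ergodic decomposition of $\mu$ under $a$, if necessary) to ensure the choice is $\mu$-a.s.\ canonical. Note that any such $V_1$ is automatically contained in $U_1$, since $U_1$ itself is a connected $a_1$-normalized subgroup of $G_1$ containing $\pi_1(\supp(\mu_x^U))$.

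Next, set $V := V_1 \times U_2$, which is an $a$-normalized subgroup of $U$ in which $\supp(\mu_x^U)$ is a.s.\ contained. The uniqueness-up-to-proportionality of leafwise measures (Proposition~\ref{prop:LFMeasures}) identifies $\mu_x^U$ with $\mu_x^V$ under the inclusion $V \hookrightarrow U$, since a $\sigma$-algebra subordinate to $V$ near $x$ is also subordinate to $U$ with shape inside $V$, so the characterizing property of leafwise measures transfers whenever the support lies in $V$. Consequently the entropy contributions coincide: $h_\mu(a,U) = h_\mu(a,V)$. Applying Lemma~\ref{lem:Inequality} to $V$ then yields
$$h_\mu(a,V) \leq h_{m_{X_1}}(a_1,V_1) + h_\mu(a,\set{e}\times U_2),$$
and combined with the equality hypothesis for $U$ this forces $h_{m_{X_1}}(a_1,U_1) \leq h_{m_{X_1}}(a_1,V_1)$.

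To close, observe that both $V_1$ and $U_1$ are connected $a_1$-normalized subgroups of the stable horospherical $(G_1)_{a_1}^-$, so each has Haar-entropy contribution equal to $-\log|\det \Ad_{a_1}|_{\Lie(\cdot)}|$. Since $\Ad_{a_1}$ strictly contracts $\Lie(G_1)_{a_1}^-$, a proper inclusion $\Lie V_1 \subsetneq \Lie U_1$ yields the strict inequality $h_{m_{X_1}}(a_1,V_1) < h_{m_{X_1}}(a_1,U_1)$, contradicting what we just derived. Hence $V_1 = U_1$.

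I expect the main obstacle to be justifying the identification $\mu_x^U \equiv \mu_x^V$ (up to scaling) when $\supp(\mu_x^U) \subseteq V$: while intuitively clear, this requires carefully unpacking the characterizing property in Proposition~\ref{prop:LFMeasures}, constructing a partition which is simultaneously $V$- and $U$-subordinate near a.e.~$x$, and invoking the uniqueness-up-to-proportionality. A secondary subtlety is the measurable definition of a canonical smallest $V_1$, which in the non-ergodic case requires passing through the ergodic decomposition and a standard Borel-space argument on the countable lattice of closed $a_1$-normalized connected subgroups of $U_1$.
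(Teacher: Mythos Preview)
Your proof is correct and follows essentially the same route as the paper's: assume a proper $V_1\subsetneq U_1$ contains $\pi_1(\supp\mu_x^U)$ a.e., deduce $h_\mu(a,U)=h_\mu(a,V_1\times U_2)$ from the identification of $\mu_x^U$ with $\mu_x^{V_1\times U_2}$, apply Lemma~\ref{lem:Inequality} to $V_1\times U_2$, and contradict the equality hypothesis via the strict inequality $h_{m_{X_1}}(a_1,V_1)<h_{m_{X_1}}(a_1,U_1)$ coming from the Jacobian formula on the contracted Lie algebra. The paper also flags the step $\mu_x^U|_V=\mu_x^V$ as the one requiring care (it phrases it as a leafwise-measure analogue of double conditioning), exactly as you do.
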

\begin{proof}
Assume for contradiction that the smallest connected $a_1$-normalized subgroup of $G_1$ containing $\pi_1\pa{\supp{\mu_x^U}}$ for almost every $x\in X$ is $V_1\lneq U_1$. We claim that the following strict inequality holds:
\begin{align*}
h_\mu\pa{a,{U_1\times U_2}}& 
= h_\mu\pa{a,{V_1\times U_2}} \\
&\leq h_{m_{X_1}}\pa{a_1,V_1}+h_\mu(a,\set{e}\times U_2)\\
&<  h_{m_{X_1}}\pa{a_1,U_1}+h_\mu(a,\set{e}\times U_2).
\end{align*}
Indeed, the first equality follows from the definition of the entropy contribution \eqref{eq:VolDef} and  \eqref{eq:EntContDef} (strictly speaking, one needs to verify that for $U'<U$ we have $\pa{\mu_x^U}|_{U'}=\mu_x^{U'}$, i.e.~a leafwise measure variant of  the double conditioning formula for conditional measures (see e.g.\ \cite[Prop.~5.20]{EWBOOK}). Now, the first inequality follows from \eqref{eq:EntInEq} applied to $\pa{V_1\times\set{e}} \pa{\set{e}\times U_2}$. For the last strict inequality, note first that $V_1\lneq U_1<\pa{G_1}_{a_1}^-$, so all the directions  in $G_{a_1}^{-}$ are being contracted at least by some factor $<1$. Therefore,  we have the following strict inequality for the contribution with respect to the Haar measures (see~\eqref{eq:boundEntCont}):
$$
h_{m_{X_1}}(a_1,V_1)=-\log\av{\det \Ad_{a_1}|_{\Lie(V_1)}}< -\log\av{\det \Ad_{a_1}|_{\Lie(U_1)}}= h_{m_{X_1}}(a_1,U_1).
$$
This implies the strict inequality above. But having a strict inequality is a contradiction to our assumption that \eqref{eq:EntInEq} holds for $U$.
\end{proof}

\begin{coro}\label{cor:SupportProjectsOnto}
  In the setting of \S \ref{subsec:settings} let $\mu$ be an ergodic $A=\phi(\bZ^d)$-invariant joining, and let $U=U_1\times U_2$ be a coarse Lyapunov weight. Let $P_x$ be the smallest connected $A$-normalized subgroup of $U$ containing $\supp{\mu_x^U}$. Then, there is set $X'\subset X$ with $\mu(X)=\mu(X')$ and  a subgroup $P<U$ with $\pi_i(P)=U_i$ for $i=1,2$ such that $P=P_x$ for every $x\in X'$.
\end{coro}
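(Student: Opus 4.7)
The plan is to prove the two assertions separately: almost-sure constancy of $x\mapsto P_x$, and then the projection identity $\pi_i(P)=U_i$. For the constancy I would combine the equivariance of leafwise measures with the ergodicity of the $A$-action; for the projection I would invoke Proposition~\ref{Prop:ABraRokhForCoarse} together with Corollary~\ref{cor:equalityImpOnto}.

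For the constancy step, fix $a\in A$. Proposition~\ref{prop:LFMeasures}\eqref{item: equivary} gives $\mu_{a.x}^U\propto(\theta_a)_*\mu_x^U$ for a.e.~$x$, whence $\supp\mu_{a.x}^U = a\,(\supp\mu_x^U)\,a^{-1}$. Since $P_x$ is $A$-normalized, $aP_xa^{-1}=P_x$ is a connected $A$-normalized subgroup of $U$ containing $\supp\mu_{a.x}^U$, and minimality of $P_{a.x}$ yields $P_{a.x}\subseteq P_x$; the reverse inclusion follows by applying the same reasoning with $a$ replaced by $a^{-1}$ and $x$ replaced by $a.x$. Intersecting the relevant null sets over the countable group $A=\phi(\bZ^d)$, we obtain a full-measure set on which $x\mapsto P_x$ is $A$-invariant. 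Connected $A$-normalized subgroups of the unipotent group $U$ correspond to $A$-invariant Lie subalgebras of $\Lie(U)$, which form a closed (in fact algebraic) subset of the Grassmannian of subspaces of $\Lie(U)$. Since the leafwise measures $\mu_x^U$ depend measurably on $x$, so does $x\mapsto \Lie(P_x)$, and ergodicity of $\mu$ with respect to $A$ forces this map to be almost surely equal to a constant subalgebra. Call the corresponding subgroup $P<U$ and let $X'$ be the full-measure set on which $P_x=P$.

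For the projection identity, recall that $U=U_1\times U_2$ is a coarse Lyapunov subgroup, so Proposition~\ref{Prop:ABraRokhForCoarse} furnishes equality in \eqref{eq:EntInEq} for this $U$. Choose $a_1\in A_1$ with $U_1\subseteq(G_1)_{a_1}^-$. Corollary~\ref{cor:equalityImpOnto} now asserts that $U_1$ is the smallest connected $a_1$-normalized subgroup of $G_1$ containing $\pi_1(\supp\mu_x^U)$ for $x$ in some full-measure set $Y$. The projection $\pi_1(P)$ is connected (continuous image of the connected group $P$), contained in $U_1$, normalized by $A_1=\pi_1(A)$ and in particular by $a_1$, and contains $\pi_1(\supp\mu_x^U)$ for every $x\in X'\cap Y$, a set of full measure. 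Minimality therefore gives $U_1\subseteq\pi_1(P)\subseteq U_1$, so $\pi_1(P)=U_1$; the symmetric argument (swapping the roles of the two factors and choosing $a_2\in A_2$ with $U_2\subseteq(G_2)_{a_2}^-$) yields $\pi_2(P)=U_2$.

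The main delicate point is the measurability in the constancy step: once one verifies that $x\mapsto P_x$ takes values in an appropriate standard Borel space of subgroups in a Borel-measurable fashion, the $A$-invariance combined with ergodicity of $\phi(\bZ^d)$ yields constancy at once. The subsequent deduction of $\pi_i(P)=U_i$ is then essentially a reinterpretation of the equality case of Corollary~\ref{cor:equalityImpOnto} for the coarse Lyapunov subgroup $U$, and requires no new ingredients beyond the ``basic'' and ``second'' ingredients described in \S\ref{sec:birdsAndIngredients}.
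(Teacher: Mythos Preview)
Your proof is correct and follows essentially the same approach as the paper's: both use Corollary~\ref{cor:equalityImpOnto} (via the equality furnished by Proposition~\ref{Prop:ABraRokhForCoarse}) for the projection identity, and equivariance plus measurability (through the Grassmannian of $\Lie(U)$) plus ergodicity for the almost-sure constancy of $P_x$. The only differences are cosmetic: you treat constancy before the projection whereas the paper does the reverse, and you spell out the two inclusions $P_{a.x}\subseteq P_x$ and $P_x\subseteq P_{a.x}$ more explicitly than the paper does.
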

\begin{proof}
  According to Corollary \ref{cor:equalityImpOnto} we have $\pi_i(P_x)=U_i$ for $i=1,2$ for almost all $x\in X$, so we just have to show that $P_x$ is almost everywhere constant. To this end, first we note that $x\mapsto P_x$ is a measurable map (which is not at all obvious: one checks this using the characterizing property in Prop \ref{prop:LFMeasures}, where the set of (connected) subgroups of~$U$ is equipped with the topology on the Grassmannian of $\Lie(U)$). Moreover, since $P_x$ is $A$-normalized, it follows from \eqref{eq:equivariancy} that for every $a\in A$ and almost every $x$ we have $P_{a\cdot x}=P_{x}$. By the ergodicity assumption, the $A$-invariant measurable map $x\mapsto P_x$ must be then almost everywhere constant, as we wanted to show. 
\end{proof}

The last corollary could be loosely interpreted, by saying that the leafwise measures of coarse Lyapunov weights of a joining behave like the joining: they project onto each of  their natural factors. In particular, we see that leafwise measures of coarse Lyapunov weights have “interesting” support. In the next two sections (depending on the context, and on if different coarse Lyapunov weights commutes or not) we will see how one could use the non-trivial support of these measures to generate an unipotent invariance.

\section{the high rank case}\label{sec:highRank}

When we have two non-commuting coarse Lyapunov weights, the following theorem \parencite{EinsiedlerKatok}, coupled with the input we get from Corollary \ref{cor:SupportProjectsOnto}, finds an unipotent invariance for us.

\begin{theo}[The high entropy method]\label{thm:highEntropy} In the setting of Section \ref{subsec:settings}, let $\mu$ be an  $A$-invariant ergodic probability measure
and fix $a\in A$. Then, there exist two connected and $A$-normalized subgroups $H\unlhd P\leq G_a^-$ such that the following holds
\begin{enumerate}
    \item For almost every $x\in X$, $P$ is the minimal $A$-normalized subgroup supporting $\mu_x^{G_a^-}$.
    \item For almost every $x\in X$, $\mu_x^{G_a^-}$ is left- and right-invariant under $H$.
    \item If $[\lambda]\neq [\eta]$ are two different coarse Lyapunov weights with $G^{[\lambda]},G^{[\eta]}\leq G_a^-$ and $g_1\in P\cap G^{[\lambda]},\, g_2\in P\cap G^{[\eta]}$, then $[g_1,g_2]\in H$, that is, $g_1H$ and $g_2H$ commute with each other in $P/H$. 
    \item \label{item:HighEnt_Invariance}For every coarse Lyapunov weight $[\lambda]$ with $G^{[\lambda]}<G_a^-$ and for almost every $x\in X$, $\mu_x^{G^{[\lambda]}}$ is left- and right-invariant under $H\cap G^{[\lambda]}$.
\end{enumerate}
\end{theo}



We referred to this theorem as the “third ingredient” in \S \ref{sec:birdsAndIngredients}. We will explain now how the three ingredients we named there allow us to classify joinings when $\bG_1$ and $\bG_2$ are semisimple without rank one factors. As the main steps of the proof are already visible when one considers the case $\bG_1=\bG_2=\SL_3$, we will restrict ourselves to this case for simplicity and prove:

\begin{theo}\label{Thm:SL3Classification} In the setting of Example \ref{exa:SL3} all ergodic joinings are algebraic. More precisely, the only possible ergodic joinings are the trivial one and joinings which are the Haar measure on an orbit of a diagonal embedding of $\SL_3$ into $\SL_3^2$, that is, an orbit of a group of the form $\set{(g,\Psi(g)):g\in \SL_3(\bR)}< \SL_3^2(\bR)$, for an automorphism $\Psi\colon\SL_3\to \SL_3$.
\end{theo}

\subsection{Proof of Theorem \ref{Thm:SL3Classification}}
Recall that $G_i=\bG_i(\bR),\, i=1,2$ which is $\SL_3(\bR)$ in our case and  that $G=G_1\times G_2$. Recall that $\mu$ denotes the joining we are trying to identify, and we begin by considering the subgroup $I<G$ which is generated by all one-parameter unipotent subgroups that leave the measure $\mu$ invariant. If we knew that $\mu$ was algebraic (in the sense of Theorem \ref{Thm:SL3Classification}), $I$ would have projected onto each factor. Reversing the order, we begin by establishing the latter as a first step towards the algebraicity of $\mu$:
\begin{lemm}\label{lem:goi projects}
  Let $\goi$ be the Lie algebra of $I$. Then, for $i=1,2$ we have $\pi_i(\goi)=\gog_i$ (as above $\gog_i=\mathfrak{sl}_3$ denotes the Lie algebra of $G_i$).
\end{lemm}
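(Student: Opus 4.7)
The plan is to show, for each coarse Lyapunov weight $[\lambda_{k\ell}]$ (indexed by $k\neq \ell$ in $\{1,2,3\}$, with coarse Lyapunov subgroup $G^{[\lambda_{k\ell}]}=U_{k\ell}^{(1)}\times U_{k\ell}^{(2)}$), that $\goi$ contains an element of $\gou_{k\ell}^{(1)}\oplus \gou_{k\ell}^{(2)}$ whose projections to both factors are nonzero. Once this is established, I would conclude as follows: since $A$ preserves $\mu$, it normalizes $I$, so $\goi$ is $\Ad(A)$-invariant, and therefore each $\pi_s(\goi)\subset\gosl_3$ is a sum of $\Ad(A_s)$-weight spaces; as each root space is one-dimensional, $\pi_s(\goi)$ then contains every $\gou_{k\ell}^{(s)}$ in full. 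The six root spaces generate $\gosl_3$ as a Lie algebra (the brackets $[\gou_{k\ell}^{(s)},\gou_{\ell k}^{(s)}]$ span the Cartan), hence $\pi_s(\goi)=\gog_s$.

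To produce the desired element for a fixed $[\lambda_{k\ell}]$, I would exploit the $A_2$ root structure: from Figure \ref{figure:SL3weights} one can always choose two other coarse Lyapunov weights $[\lambda_{ij}],[\lambda_{i'j'}]$ with $[e_{ij},e_{i'j'}]=\pm e_{k\ell}$ and an $a\in A$ such that all three weight vectors $w_{ij},w_{i'j'},w_{k\ell}$ lie in a common open half-plane $\{v:\langle v,a\rangle<0\}$; equivalently, $G^{[\lambda_{ij}]},G^{[\lambda_{i'j'}]},G^{[\lambda_{k\ell}]}\subset G_a^-$. (For instance, for $k\ell=13$ one may take $ij=12$, $i'j'=23$, and $a=\phi((1,0))$, arguing analogously in the remaining five cases.) Applying Theorem \ref{thm:highEntropy} to this $a$ produces subgroups $H\unlhd P\leq G_a^-$. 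Combining the product structure \eqref{eq:ProdStructure} for $\mu_x^{G_a^-}$ with Corollary \ref{cor:SupportProjectsOnto} applied to each coarse Lyapunov weight $[\lambda]\subset G_a^-$ --- which yields subgroups $P^{[\lambda]}<G^{[\lambda]}$ projecting onto both $G_1^{[\lambda]}$ and $G_2^{[\lambda]}$ --- identifies $P$ with $\prod_{[\lambda]\subset G_a^-}P^{[\lambda]}$. I would then choose $g_1=(u_{ij},u'_{ij})\in P\cap G^{[\lambda_{ij}]}=P^{[\lambda_{ij}]}$ and $g_2=(u_{i'j'},u'_{i'j'})\in P^{[\lambda_{i'j'}]}$ with all four components nonzero, and Theorem \ref{thm:highEntropy}(3) gives
\[
[g_1,g_2]=\bigl([u_{ij},u_{i'j'}],\,[u'_{ij},u'_{i'j'}]\bigr)\in H,
\]
whose two components in $G^{[\lambda_{k\ell}]}$ are both nonzero. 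Theorem \ref{thm:highEntropy}(2) together with the standard interpretation of Proposition \ref{prop:LFMeasures}\eqref{item:Invariance} yields $H$-invariance of $\mu$, so the unipotent one-parameter subgroup generated by $[g_1,g_2]$ lies in $I$, contributing the required element to $\goi$.

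The main obstacle I expect is the combinatorial condition of fitting the three non-commuting weight vectors $w_{k\ell},w_{ij},w_{i'j'}$ into one common open half-plane: this works smoothly for the $A_2$ root system of $\SL_3$ (and essentially the same argument extends to higher-rank simple groups without $\SL_2$-factors), but breaks down the moment all coarse Lyapunov weights commute pairwise, as in \ExampleOne{} and \ExampleTwo{}. In that regime the high-entropy method contributes nothing useful, and one must instead invoke the low-entropy method of \S\ref{sec:Rank2}.
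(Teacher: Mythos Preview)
Your proof is correct and follows essentially the same approach as the paper's: both use the high-entropy Theorem~\ref{thm:highEntropy} together with Corollary~\ref{cor:SupportProjectsOnto} and the $A_2$ commutation relations to produce elements of $H\subset I$ projecting nontrivially onto each $\gou_{k\ell}$ in both factors. One small imprecision: the identification $P=\prod_{[\lambda]}P^{[\lambda]}$ need not hold literally (commutators $[P^{[\lambda]},P^{[\lambda']}]$ may enlarge $P\cap G^{[\lambda_{k\ell}]}$ beyond $P^{[\lambda_{k\ell}]}$), but only the inclusion $P^{[\lambda]}\subset P\cap G^{[\lambda]}$ is needed, and that does follow from the product structure since $e\in\supp\mu_x^{[\lambda']}$ for all $\lambda'$.
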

\begin{proof}
As we explain in Example \ref{exa:SL3}, we have six non-trivial coarse Lyapunov groups $\gog^{ij}=\gou_{ij}\times \gou_{ij}$. Consider, for example, $\gog^{13}$. We would like to write $\gog^{13}$ as a commutator of two other coarse Lyapunov groups, which appear together with $\gog^{13}$ in some stable horospherical subgroup. More precisely, recall that $A=\phi(\bZ^2)$ is a subgroup of the diagonal group and pick $a\in A$ with $\gog_a^-=\Lie(G_a^-)=\gog^{12}\oplus\gog^{13}\oplus\gog^{23}$ (the Lie algebra of the Heisenberg group). For example, one can pick $a=\phi(1,2)$ with the notation of Example~\ref{exa:SL3}. Note that $[\gog^{12},\gog^{23}]=\gog^{13}$. We apply the high entropy Theorem (Theorem~\ref{thm:highEntropy}) with these choices and let~$P$ and~$H$ the subgroups appearing in that theorem. We know that $H\cap G^{13}$ contains all commutators $[g_1,g_2]$ with  $g_1\in P\cap G^{12}$ and $g_2\in P\cap G^{23}$.  Moreover, we know from Corollary~\ref{cor:SupportProjectsOnto} that $P\cap G^{12}$ must project onto $G_i^{12},\,i=1,2$, using the notation $G^{ij}=G_1^{ij}\times G_2^{ij}$. The same holds for the projection of $P\cap G^{23}$ on the corresponding two factors. As $[G_i^{12},G_i^{23}]=G_i^{13}$ we see that $H$~must project onto $G_i^{13}$ for $i=1,2$. Going back to the Lie algebra, this means, that for $i=1,2$ and $w\in \gog_i^{13}=\Lie(G_i^{13})=\gou_{13}$, $\goi$ must contain an element whose $i$-component is~$w$. 

As we explain in Example \ref{exa:SL3},  each coarse Lyapunov space can be written as commutators of other Lyapunov spaces appearing in the decomposition of some stable horospherical subgroup. Therefore, the above argument can be applied to each of the coarse Lyapunov weight spaces. As $\gog_i=\gosl_3,\,i=1,2$ is generated by $\gou_{ij},\,i\neq j$, the lemma follows.
\end{proof}

Loosely speaking, we already see the possible joinings only by looking at the possibilities for $H\cap G^{ij}$: it is either two-dimensional and equals $G^{ij}$ itself, or it is one-dimensional and equal to a graph of an isomorphism from $U^{ij}$ to itself. We note just for fun, that if $H\cap G^{ij}$ is two-dimensional for some $i\neq j$, $\mu$ will be invariant under an element of the form $(u,e)$ or $(e,u)$ with $u\in G_i$ acting ergodically on the corresponding quotient $G_i$; in other words, we will be again in the situation of the proof of Corollary \ref{cor:differntRootDisj} which shows that in this case, $\mu$ must be the trivial joining. For the  general case, we will need  the following Lemma.
\begin{lemm}\label{lem:LieIdeal}
  The Lie Algebra $\goi\cap \pa{\gog_1\times\set{0}}$ is a Lie ideal in $\gog=\gog_1\times\gog_2$ (and therefore also in  $\gog_1\times\set{0}\cong \gog_1$). The same holds for $\goi\cap \pa{\set{0}\times\gog_2}$.
\end{lemm}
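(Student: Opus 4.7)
The plan is to verify the ideal property by a short direct computation that combines two already-available facts: $\goi$ is itself a Lie subalgebra of $\gog$ (being the Lie algebra of the group $I$ generated by unipotent one-parameter subgroups preserving $\mu$), and, by Lemma~\ref{lem:goi projects}, the projections $\pi_i\colon \goi \to \gog_i$ are surjective. By the symmetry of the assumptions, it is enough to treat $\goi \cap (\gog_1 \times \set{0})$.

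The key observation is that bracketing against an element with vanishing second component only sees the first component of the other input. Concretely, for $x = (x_1, 0) \in \goi \cap (\gog_1 \times \set{0})$ and arbitrary $y = (y_1, y_2) \in \gog_1 \times \gog_2$, one has
\[
[x, y] = ([x_1, y_1], [0, y_2]) = ([x_1, y_1], 0),
\]
so $[x,y]$ automatically lies in $\gog_1 \times \set{0}$. The only nontrivial point is membership in $\goi$.

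For this, I would use Lemma~\ref{lem:goi projects} to lift $y_1$ into $\goi$: choose $y_2' \in \gog_2$ such that $\tilde{y} := (y_1, y_2') \in \goi$. Because $[0, y_2'] = 0$, the computation above gives $[x, \tilde{y}] = ([x_1,y_1], 0) = [x, y]$. Since $x, \tilde{y} \in \goi$ and $\goi$ is closed under brackets, we conclude $[x,y] \in \goi$, as desired. This shows the ideal property in $\gog$; specialising to $y \in \gog_1 \times \set{0}$ gives the "in particular" statement about $\gog_1$.

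There is no real obstacle here — morally the lemma is a Goursat-type observation, promoting the surjectivity $\pi_1(\goi) = \gog_1$ into the statement that $\goi \cap (\gog_1 \times \set{0})$ absorbs all of $\gog$ under the adjoint action. This ideal property is exactly what will be needed afterwards, since the $\gog_i$ are simple (in the baby case $\bG_i = \SL_3$) and so the ideal is forced to be either $0$ or all of $\gog_i \times \set{0}$, which is the dichotomy underlying the two cases — trivial joining vs.\ diagonal joining — in Theorem~\ref{Thm:SL3Classification}.
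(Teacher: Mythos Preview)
Your proof is correct and follows essentially the same approach as the paper: take an arbitrary element of $\gog$, replace its second component using the surjectivity of $\pi_1|_{\goi}$ from Lemma~\ref{lem:goi projects}, and observe that the bracket is unchanged and now visibly lies in $\goi$. The additional commentary you include about the Goursat-type interpretation and the subsequent dichotomy is accurate and matches how the lemma is used downstream.
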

\begin{proof}
  Consider $(w,0)\in \goi\cap \pa{\gog_1\times\set{0}}$ and $(v_1,v_2)\in \gog$. Using Lemma~\ref{lem:goi projects} pick an element in $\goi$ having the same first component as $(v_1,v_2)$, say $(v_1,v_2')\in \goi$. Then, 
  $$
  \gog_1\times\set{0}\ni ([w,v_1],0)=[(w,0),(v_1,v_2)]=[(w,0),(v_1,v_2')]\in \goi,
  $$ 
  as we wanted to show.
\end{proof}

\begin{proof}[Proof of Theorem \ref{Thm:SL3Classification}]
In our case $\gog_1=\gog_2=\gosl_3$ is a simple Lie algebra, so the Lie ideals in Lemma~\ref{lem:LieIdeal} are either trivial or equal to $\gosl_3$. If  one of them is equal to $\gosl_3$,  we can find a non-trivial element of the form $(u,e)$ or $(e,u)$ with $u$ unipotent, as in the proof of Corollary \ref{cor:differntRootDisj}, which shows that $\mu$ must be the trivial joining. So we may assume now that both ideals are trivial. By the folklore Goursat's Lemma,
$\goi$ must be the graph of an isomorphism, that is, 
$$
\goi=\set{(w,\Phi(w):w\in \gog_1}
$$
for an \emph{isomorphism} $\Phi\colon\gog_1\to\gog_2$. It follows that
$$
I=\set{(g,\Psi(g):g\in G_1}
$$
for an \emph{isomorphism} $\Psi\colon G_1\to G_2$.
This isomorphism must intertwine the diagonal action $\phi=(\phi_1,\phi_2)$, that is,   $\phi(s,t):=(\phi_1(s,t),\phi_2(s,t))$ must agree with $\pa{\phi_1(s,t),\Psi(\phi_1(s,t))}$. Indeed, if not, $\mu$ would be invariant under two elements having the same first coordinate but a different second coordinate, so we could again find an element of the form $(e,a)$ with $a$ acting ergodically on $X_1$, preserving $\mu$. This puts us again in the situation of the proof of Corollary \ref{cor:differntRootDisj}. It follows that $A\subset I$, and therefore $\mu$  must be ergodic with respect to the action of $I$, since $\mu$ was ergodic with respect to the action of $A$. Ratner's measure classification can be then applied with the group $I$ showing that $\mu$ is the Haar measure on an orbit of $I$, which concludes the proof.
\end{proof}
\begin{rema}
We actually get more information about the possible joinings from the above proof. First, the automorphism $\Psi$ must intertwine the action $\phi$. Second, if  there exists a non-trivial joining, Ratner's Theorem will imply that the lattices $\Gamma_1$ and $\Gamma_2$ must be commensurable.
\end{rema}
The proof of the general theorem, say when $\bG_1$ and $\bG_2$ are both semisimple of rank $\geq 2$, is very similar. One essentially replaces the ad-hoc argument for $\gosl_3$ in the beginning of Lemma~\ref{lem:goi projects} with a general argument that works for any semisimple Lie algebra without rank one factors; see  \textcite[Lemma~4.2]{EL_Joinings2007}.

\section{Joinings with forms of $\SL_2$ }\label{sec:Rank2}
In this section we wish to classify the possible ergodic joinings in Example \ref{exa:SL2Squared}\eqref{exa:SL2IdenticalWeights},  the “identical weights” example, and Example \ref{exa:SL2Squared}\eqref{exa:SL2DifferentSpeeds}, the “different speeds” example. Note that we already used Corollary \ref{cor:differntRootDisj} to show that all the possible joinings in Example~\ref{exa:SL2Squared}\eqref{exa:SL245Rotation} are trivial.
These sub-examples of Example \ref{exa:SL2Squared} may seem very specific, but they are actually very representative examples. Conceptually, classifying joinings when $\bG_1=\bG_2=\SL_2\times\SL_2$ is the main new case, which is included in Theorem \ref{thm:mainThm}, but not included in the results of \textcite{EL_Joinings2007}. Moreover, the above three structures of the torus action cover all possible types of joined actions. Furthermore, an analogous full classification of joinings in a general $S$-adic version of Example \ref{exa:SL2Squared}, for products of $\SL_2$-forms, will suffice for most of the applications discussed in section \ref{sec:Applicaitions}.

As we explained in \S \ref{sec:birdsAndIngredients}, the new ingredient used
in \textcite{EL_Joinings2019} in comparison to \textcite{EL_Joinings2007} is
the low-entropy method. This method was originally developed by
\textcite{LindenstraussQUE}. We wish to follow the main steps of \S 7 of this
paper (which are also nicely explained in \cite[\S 10]{PisaNotes}) in order to sketch a proof of the crucial step, Proposition~\ref{prop:mainStep}, in classifying the possible joinings in the two examples listed above. In some sense, we “merely” run the same argument, that Lindentrauss ran in one factor of the form $\Gamma\backslash\pa{\SL_2(\bR)\times\SL_2(\bR)}$, in two such factors simultaneously. Our torus action is embedded “diagonally”, that is, it acts on both factors simultaneously. This gives rise to several complications (and sometimes to cumbersome notation) that must be taken into account. 

Let us recall and set up some notation: we let $\bG_1=\bG_2=\SL_2\times\SL_2$, $G_i=\bG_i(\bR),\,i=1,2$ and for simplicity let $\Gamma_1=\Gamma_2$ be an irreducible lattice in $\SL_2(\bR)\times\SL_2(\bR)$ (e.g., $\SL_2(\bZ[\sqrt{2}])$ diagonally embedded). The letter $d$ denotes a left-invariant metric on $G$, or on $G_i$, or on $\SL_2(\bR)$, depending on the context. Let $X_i=\Gamma_i\backslash G_i$ and set $\bG=\bG_1\times \bG_2$, $G=G_1\times G_2$, and $X=X_1\times X_2$. Note that $G$ (or $\bG$) have four factors, first two in $G_1$ and last two in $G_2$. We refer to them as the first, second, third, and fourth factor accordingly. 
Recall the notation of $\phi,\tau\colon\bZ^2\to G_i$ given by $\phi(t,s)=(a_t,a_s)$ and $\tau(t,s)=(a_{2t},a_{2s})$. We will concentrate on the map $\Phi_1=(\phi,\phi)\colon\bZ^2\to G$, i.e, on Example \ref{exa:SL2Squared}\eqref{exa:SL2IdenticalWeights}, remarking, where needed, what would have changed if we had considered the map $\Phi_2=(\phi,\tau)\colon\bZ^2\to G$ corresponding to Example \ref{exa:SL2Squared}\eqref{exa:SL2DifferentSpeeds}. By abuse of notation, we denote $A=\Phi_1(\bZ^2)$ or $A=\Phi_2(\bZ^2)$ in both cases. 

We recall from Example \ref{exa:SL2Squared} that for $\Phi_1$ there are four weight spaces, each giving rise to a different coarse Lyapunov subgroup, and for $\Phi_2$ there are 8 weight spaces, coupled through coarse equivalence into pairs, which give rise to the same four Lyapunov subgroups. We denote by $[\alpha]$ the coarse Lyapunov weight with  $w_\alpha=(-1,0)$ and by $[\beta]$ be the coarse Lyapunov weight with $w_\beta=(0,-1)$. With $U^+$ (resp.~ $U^-$) denoting the upper (resp. lower) triangular unipotent subgroups in $\SL_2(\bR)$, the corresponding coarse Lyapunov subgroups in both cases are 
\begin{align*}
    U^{[\alpha]}=&\pa{U^+\times\set{e}}\times\pa{U^+\times\set{e}}\\ 
    U^{[\beta]}=&\pa{\set{e}\times U^+}\times\pa{\set{e}\times U^+}\\
    U^{[-\alpha]}=&\pa{U^-\times\set{e}}\times\pa{U^-\times\set{e}}\\ 
    U^{[-\beta]}=&\pa{\set{e}\times U^-}\times\pa{\set{e}\times U^-}.
\end{align*}
We denote by $\mu$ the $A$-invariant joining on $X$ that we wish to identify. Recall that $\mu^{[\alpha]}_x,\mu^{[\beta]}_x,\mu^{[-\alpha]}_x,\mu^{[-\beta]}_x$ denote the  leafwise measures corresponding to the coarse Lyapunov subgroup as above.
For each of the coarse Lyapunov subgroups (i.e., for $\lambda\in \set{\pm\alpha,\pm\beta}$) let  
\begin{equation}\label{eq:LocalLWInvGroup}
I_{x}^{[\lambda]}=\set{u\in U^{[\lambda]}: u \text{ preserves the measure }\mu_x^{[\lambda]}}.
\end{equation}
The main goal of this section is to outline a proof of the following proposition:
\begin{prop}[Main step]\label{prop:mainStep}
Let $\mu$ be an ergodic joining on $X$, that is, let $\mu$ be an $A$-invariant ergodic measure on $X$ which projects to the Haar measure on $X_i$ in each factor. For any coarse Lyapunov weight $[\lambda]$, and for $\mu$-almost every $x$, $I_x^{[\lambda]}$ is not the trivial subgroup. 
\end{prop}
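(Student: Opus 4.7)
The plan is to argue by contradiction. Suppose on the contrary that $I_x^{[\lambda]}=\{e\}$ on a set of positive $\mu$-measure. By the $A$-equivariance of leafwise measures (Proposition~\ref{prop:LFMeasures}\eqref{item: equivary}), the assignment $x\mapsto I_x^{[\lambda]}$ is itself $A$-equivariant, so ergodicity of $\mu$ forces $I_x^{[\lambda]}=\{e\}$ for $\mu$-almost every $x$. The goal is to produce a non-trivial element of $U^{[\lambda]}$ that does preserve $\mu_x^{[\lambda]}$, yielding the desired contradiction.

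Before setting up the displacement argument, I record two preliminary facts about the family $\{\mu_x^{[\lambda]}\}$. First, Corollary~\ref{cor:SupportProjectsOnto} shows that $\supp \mu_x^{[\lambda]}$ projects onto each factor $U_i^{[\lambda]}$ for $\mu$-almost every $x$. Second, Proposition~\ref{Prop:ABraRokhForCoarse} together with the positivity of $h_{m_{X_1}}(a_1,U_1^{[\lambda]})$ gives
\begin{equation*}
h_\mu(a,U^{[\lambda]}) \geq h_{m_{X_1}}(a_1,U_1^{[\lambda]}) > 0
\end{equation*}
whenever $a\in A$ strictly contracts $U^{[\lambda]}$; by the definition of volume-decay entropy in \eqref{eq:VolDef}, $\mu_x^{[\lambda]}$ cannot almost surely be a Dirac mass at the identity. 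Thus $\mu_x^{[\lambda]}$ is a genuinely rich locally finite measure on $U^{[\lambda]}$.

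The heart of the proof is a coupling construction in the spirit of Lindenstrauss's low-entropy method (see \textcite{LindenstraussQUE} and \cite[Ch.~10]{PisaNotes}). Fix $a\in A$ that strictly contracts $U^{[\lambda]}$; in the identical-weights case of Example~\ref{exa:SL2Squared}\eqref{exa:SL2IdenticalWeights} one may take $a=\Phi_1(1,0)$, while in the different-speeds case of Example~\ref{exa:SL2Squared}\eqref{exa:SL2DifferentSpeeds} the asymmetry of the scales encoded in $\Phi_2=(\phi,\tau)$ is precisely the reason the argument closes. By a Lusin-type refinement, pick a compact set $K\subset X$ of $\mu$-measure close to $1$ on which $x\mapsto \mu_x^{[\lambda]}$ is uniformly continuous (on compact subsets of $U^{[\lambda]}$) and on which Birkhoff-recurrence times for $a^{\bZ}$ are quantitatively controlled. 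For a generic $x\in K$, a density argument produces a nearby generic $y=g.x\in K$ where $g$ is a small element outside $U^{[\lambda]}$ in a carefully chosen transverse direction. Iterating by $a^n$, the displacement $\theta_{a^n}(g)$ develops along those coarse Lyapunov subgroups that $a$ fails to contract; combined with the equivariance formula \eqref{eq:equivariancy} and repeated recurrence of the orbit to $K$, one extracts in the limit an element $u\in U^{[\lambda]}\setminus\{e\}$ for which $\mu_x^{[\lambda]}$ is invariant under translation by $u$. This contradicts $I_x^{[\lambda]}=\{e\}$ and will complete the proof.

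The hardest step is the last one: ensuring that the limit element really lies in $U^{[\lambda]}$ and is genuinely non-trivial. Two dangers must be excluded. First, the transverse element $g$ may a priori acquire dominant components in coarse Lyapunov subgroups $U^{[\lambda']}$ with $[\lambda']\neq[\lambda]$, in which case $u$ would drift outside $U^{[\lambda]}$; the way out is to exploit the precise weight geometry exhibited in Figure~\ref{figure:SL2weightsDiagrams}, using in Example~\ref{exa:SL2Squared}\eqref{exa:SL2DifferentSpeeds} the mismatch of speeds between the two copies of $\SL_2\times\SL_2$ to kill the unwanted components. Second, one must prevent the constructed displacement from collapsing to $e$ in the limit; this is where the positivity of the entropy contribution recorded above, together with a Ledrappier--Young-style dimension estimate on $\supp \mu_x^{[\lambda]}$, guarantees that a non-trivial displacement survives at a definite scale. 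These two controls are the technical core of the low-entropy step in \textcite{EL_Joinings2019}, and I expect their careful execution in the product setting (where the torus is embedded diagonally in two factors) to be the main source of additional bookkeeping compared to the original argument.
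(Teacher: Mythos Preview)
Your proposal has the right overall architecture (contradiction, Lusin set, recurrence, limit argument) but the core mechanism is wrong, and this is a genuine gap rather than missing bookkeeping. You propose to iterate by an element $a\in A$ that \emph{contracts} $U^{[\lambda]}$ and then watch the displacement $\theta_{a^n}(g)$ grow ``along those coarse Lyapunov subgroups that $a$ fails to contract''. But $U^{[\lambda]}$ is precisely a direction $a$ \emph{does} contract, so this drift can never land you inside $U^{[\lambda]}\setminus\{e\}$; at best it produces an element in some other $U^{[\lambda']}$, which is useless for concluding anything about $I_x^{[\lambda]}$. Moreover, your ``density argument produces a nearby generic $y=g.x$'' gives no reason why $\mu_x^{[\lambda]}$ and $\mu_y^{[\lambda]}$ should be equal, and without that equality the limiting step yields nothing.

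The paper's argument is structurally different in two essential ways. First, the pairs $(x_n,y_n)$ with \emph{exactly} equal leafwise measures come from Poincar\'e recurrence under the element $b\in A$ that \emph{commutes} with $U^{[\lambda]}$ (for $\lambda=\alpha$ this is $\Phi_1(0,1)$): since $b$ centralizes $U^{[\alpha]}$, the equivariance formula gives $\mu_{b^n.x}^{[\alpha]}=\mu_x^{[\alpha]}$ on the nose, and recurrence makes $b^{n_k}.x$ close to $x$ with displacement $g_n$ not in $U^{[\alpha]}$ (Lemma~\ref{lem:InputHMAchine}). Second, the displacement is pushed into $U^{[\alpha]}$ not by iterating $a$ but by \emph{shearing along $U^{[\alpha]}$ itself}: Ratner's $H$-principle (the explicit polynomial computation in~\eqref{eq:RatnerHPrinHeart}) shows that conjugating a small generic $g$ by a correctly sized $u\in U^{[\alpha]}$ produces $ugu^{-1}\approx u_\sigma$ with $u_\sigma\in U^{[\alpha]}$ of definite size. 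The remaining work (maximal ergodic theorem for leafwise measures, the fact that $\mu_x^{[\alpha]}$ is not concentrated on the axes thanks to Corollary~\ref{cor:SupportProjectsOnto}, and the trick of tweaking the initial pair by $a^t$ to hit a good timescale) is all in service of ensuring the sheared pairs stay in the Lusin set. Your sketch contains none of this; in particular the ``different speeds'' play no role in the proof of Proposition~\ref{prop:mainStep} itself---they only enter later, in Lemma~\ref{lem:mainDingEx2}, to upgrade the conclusion.
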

We will sketch a proof of this proposition in \S \ref{subsec:proofMainStep}.
\subsection{Classification under the assumption of Proposition \ref{prop:mainStep}}

\subsubsection{Preparation: coordinates on $\UAlpha$}\label{subsec:CoordUAlpha}
Throughout this section we will concentrate on $\lambda=\alpha$. 
First note, that via 
\begin{equation}\label{LieAlgWeight}
\Lie(U^{[\alpha]})\cong \bR^2,\,(n_x,0,n_y,0)\mapsto (x,y),\, n_x:=\begin{pmatrix}
  0 & x  \\
 0& 0\end{pmatrix}
\end{equation}
and the fact that  $\exp\colon\Lie(\UAlpha)\to\UAlpha$ is bijective, we can identify  $\Lie(\UAlpha)$ or $\UAlpha$ with $\bR^2$.
 For $u\in \UAlpha$ we define $u(s_1)$ and $u(s_2)$ as the real numbers satisfying   
\begin{equation}\label{eq:CoordForUAl}
    u=\pa{\begin{pmatrix} 1 & u(s_1)  \\ 0 & 1\end{pmatrix},e,\begin{pmatrix} 1 & u(s_2)  \\ 0 & 1\end{pmatrix},e}.
\end{equation}
So we can define an element $u\in \UAlpha$ by specifying $u(s_1)$ and $u(s_2)$ and refer to these as the coordinates of $u$. Any non-trivial $u\in \UAlpha$  is contained in a unique one-parameter unipotent subgroup $L_{u}\subset\UAlpha$, defined by the line in $\Lie(\UAlpha)$ through $\log(u)$. The $x$-axis corresponds to  $U^+\times\set{e}\times\set{e}\times\set{e}$, and the $y$-axis  to $\set{e}\times\set{e}\times U^+\times\set{e}$. By Remark \ref{rema:NTtimesTrivial}, showing invariance of a joining $\mu$ under a non-trivial element lying on an axis, immediately implies that $\mu$ is the trivial joining.

Identifying $\UAlpha$ with $\bR^2$ as above, let's study the  action of $A=\Phi_i(\bZ^2)$ on $\UAlpha$ for $i=1,2$ by conjugation. For $\Phi_1$, the element $\Phi_1(1,0)$  acts as scalar multiplication by $e^{-1}=(2.718\dots)^{-1}$ and $\Phi_1(0,1)$ acts trivially.  For $\Phi_2$, the element $\Phi_2(1,0)$ acts on $U^{[\alpha]}$ by multiplication with 
$\begin{pmatrix}
  e^{-1} & 0  \\
 0& e^{-2}\end{pmatrix}$, and $\Phi_2(0,1)$ acts trivially. 
\subsubsection{Classification in \ExampleOne}\label{subsec:ClassInEx1}
\begin{theo} \label{thm:Example1Classi} Any ergodic $\Phi_1(\bZ^2)$-invariant joining $\mu$ is either the trivial joining or a diagonal joining, that is, a joining supported on a graph of an isomorphism of $G_1$ with $G_2$ (as is Example \ref{ex:DiagoanlJoinings}).
\end{theo}
Before proving this theorem,  let's first analyze $I_{x}^{[\lambda]}$
for $\lambda\in \WeightSet$.  One can show that the map $x\mapsto I_{x}^{[\lambda]}$ is measurable, and by \eqref{eq:equivariancy} we also have that  
\begin{equation}\label{eq:AActionOnInv}
\forall a\in A,\quad I_{a.x}^{[\lambda]}= a I_{x}^{[\lambda]}a^{-1}.
\end{equation}
From these two facts we have:
\begin{lemm}\label{lem:everywhereConstant}
For any $\lambda\in\WeightSet$, the group $I_{x}^{[\lambda]}$ must be almost everywhere constant. 
\end{lemm}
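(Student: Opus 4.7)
The plan is to consider the measurable map $\Psi\colon X\to\cS$, $x\mapsto I_x^{[\lambda]}$, where $\cS$ is the space of closed subgroups of $U^{[\lambda]}\cong\bR^2$ equipped with the Chabauty topology, and to use the $A$-equivariance \eqref{eq:AActionOnInv} combined with ergodicity of $\mu$ to force $\Psi$ to be constant. By \eqref{eq:AActionOnInv} and the computation in \S\ref{subsec:CoordUAlpha}, $\Psi$ intertwines the $A$-action on $X$ with the linear $A$-action on $\cS$ generated by scalar multiplication by $e^{-1}$ (in \ExampleOne) or by $\operatorname{diag}(e^{-1},e^{-2})$ (in \ExampleTwo); the other generator of $A$ acts trivially in both cases.

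Next I would analyse the orbits of this action. A closed subgroup $H\subset\bR^2$ decomposes as $V+L$ with $V$ a linear subspace and $L\subset V^\perp$ a discrete subgroup. Under scaling by a non-trivial eigenvalue of modulus $<1$, $A$-invariance of $H$ forces $L=\set{0}$; hence the set $F\subset\cS$ of $A$-fixed closed subgroups consists of $\set{0}$, certain one-parameter subgroups (all lines through the origin in the first example, the two coordinate axes in the second), and $\bR^2$. Any $H\notin F$ has an infinite, discrete $A$-orbit in $\cS$ accumulating only at elements of $F$.

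The pushforward $\Psi_*\mu$ is an $A$-invariant, ergodic Borel probability measure on $\cS$. Since a free $\bZ$-orbit admits no invariant probability measure and the $\bZ$-action on $F^c$ is smooth (orbits being discrete), $\Psi_*\mu$ cannot be supported on $F^c$ and is hence concentrated on $F$. As $A$ acts trivially on $F$, ergodicity forces $\Psi_*\mu$ to be a point mass, so $\Psi$ is $\mu$-almost everywhere constant. The same argument applies verbatim for each $\lambda\in\WeightSet$. The main technical obstacle is verifying the smoothness of the $A$-action on $\cS\setminus F$; this can be bypassed by combining Lusin's theorem with Poincar\'e recurrence, which directly yields $\Psi(x)\in F$ almost surely from the fact that $a^{n_k}\cdot H\to F$ in the Chabauty topology for any $H\notin F$, after which $A$-invariance of $\Psi$ on the resulting full-measure set together with ergodicity concludes the argument.
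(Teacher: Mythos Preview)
Your argument is correct. The core mechanism is the same as the paper's: $A$-equivariance \eqref{eq:AActionOnInv} plus ergodicity of $\mu$, together with the observation that only linear subspaces of $U^{[\lambda]}\cong\bR^2$ are fixed by the conjugation action of $A$. The packaging differs: the paper does not pass to the Chabauty space $\cS$ or argue via smoothness of the action on $\cS\setminus F$. Instead it first establishes directly, via Poincar\'e recurrence (citing \textcite[Lemma~7.3]{LindenstraussQUE}, with an argument parallel to the proof of Lemma~\ref{lem:mainDingEx2}), that for almost every $x$ the presence of any nontrivial $u\in I_x^{[\lambda]}$ forces $L_u\subset I_x^{[\lambda]}$; hence $I_x^{[\lambda]}$ is already a linear subspace and, since $A$ acts by scalars in \ExampleOne, is $A$-normalized, so $x\mapsto I_x^{[\lambda]}$ is literally $A$-invariant and ergodicity finishes. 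Your closing remark about bypassing smoothness via Lusin plus recurrence is exactly this argument. The Chabauty-space route you give first is a bit more abstract and treats \ExampleOne\ and \ExampleTwo\ uniformly; the paper's hands-on version has the side benefit that the intermediate conclusion ``$I_x^{[\lambda]}$ is a one-parameter group or all of $U^{[\lambda]}$'' is reused verbatim in the subsequent proof of Theorem~\ref{thm:Example1Classi}.
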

\begin{proof}
For concreteness, consider $U^{[\alpha]}$. Recall that the element $\Phi_1(1,0)$ acts as scalar multiplication by $e^{-1}=(2.718\dots)^{-1}$ and $\Phi_1(0,1)$ act trivially. Using Poincaré recurrence as in \textcite[Lemma~7.3]{LindenstraussQUE}, or with an argument similar to the proof of Lemma~\ref{lem:mainDingEx2} below, one can show that for almost any $x\in X$, if $e\neq u\in I_{x}^{[\lambda]}$ then $L_u\subset I_{x}^{[\lambda]}$. Therefore, any subgroup of $U^{[\alpha]}$ is $A$-normalized. In particular, $I_x^{[\alpha]}$ is $A$-normalized $\mu$-almost everywhere. Since $A=\Phi_1(\bZ^2)$ is assumed to act ergodically on $X$ with respect to $\mu$, this means that $I_{x}^{[\alpha]}$ is almost everywhere constant. Such an argument holds for any other $\lambda\in\WeightSet$. 
\end{proof}
We denote this constant group by $I^{[\lambda]}$ and call it the invariance group for $[\lambda]$.
From Item \ref{item:Invariance} of Proposition \ref{prop:LFMeasures}, we have:
\begin{coro}
For any $\lambda\in\WeightSet$, $\mu$ is invariant under $I^{[\lambda]}$.
\end{coro}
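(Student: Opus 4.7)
The plan is to deduce this directly from item~\eqref{item:Invariance} of Proposition~\ref{prop:LFMeasures}, applied with $H = I^{[\lambda]}$. That proposition says $\mu$ is $H$-invariant if and only if the leafwise measure $\mu_x^{H}$ coincides (after the chosen normalization) with Haar measure on $H$ for $\mu$-almost every $x$. So the whole task reduces to producing, for a.e.\ $x$, the identification $\mu_x^{I^{[\lambda]}} = m_{I^{[\lambda]}}$.

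First I would combine the hypothesis of the corollary with Lemma~\ref{lem:everywhereConstant}. The hypothesis gives $I_x^{[\lambda]} \neq \{e\}$ almost surely, and the lemma upgrades this to $I_x^{[\lambda]} = I^{[\lambda]}$ for $\mu$-a.e.\ $x$, where $I^{[\lambda]} \leq U^{[\lambda]}$ is a fixed closed subgroup (closedness is automatic because $I_x^{[\lambda]}$ is the stabilizer of the locally finite measure $\mu_x^{[\lambda]}$ under translation in $U^{[\lambda]}$, hence closed, and a.s.-equality preserves closedness). By definition of $I_x^{[\lambda]}$, this means $\mu_x^{[\lambda]}$ is translation-invariant under the subgroup $I^{[\lambda]}$ at $\mu$-a.e.\ $x$.

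Next I would pass from the leafwise measure along $U^{[\lambda]}$ to the leafwise measure along the subgroup $I^{[\lambda]}$. The relevant tool is the sub-group compatibility of leafwise measures already invoked in the proof of Corollary~\ref{cor:equalityImpOnto}: for a closed subgroup $H < U^{[\lambda]}$ on which the action is free, one has $\mu_x^{H} \propto \mu_x^{[\lambda]}\big|_{H}$ for a.e.\ $x$. Applying this with $H = I^{[\lambda]}$, the $I^{[\lambda]}$-translation-invariance of $\mu_x^{[\lambda]}$ restricts to a left-translation-invariance of $\mu_x^{I^{[\lambda]}}$ on the locally compact group $I^{[\lambda]}$. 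A locally finite left-invariant measure on a locally compact group is (proportional to) Haar measure; after the Proposition~\ref{prop:LFMeasures} normalization $\mu_x^{I^{[\lambda]}}(B_1^{I^{[\lambda]}})=1$, we conclude $\mu_x^{I^{[\lambda]}} = m_{I^{[\lambda]}}$ for $\mu$-a.e.\ $x$. Invoking Proposition~\ref{prop:LFMeasures}\eqref{item:Invariance} then yields $I^{[\lambda]}$-invariance of $\mu$.

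The only real point one has to be careful about is the sub-group compatibility statement $\mu_x^{H} \propto \mu_x^{[\lambda]}\big|_{H}$; this is the minor technical step flagged in the proof of Corollary~\ref{cor:equalityImpOnto}, and is the leafwise-measure analogue of the double-conditioning formula. Once one accepts it, the corollary is essentially a one-line consequence of the definition of $I^{[\lambda]}$ combined with the invariance criterion of Proposition~\ref{prop:LFMeasures}.
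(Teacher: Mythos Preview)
Your overall plan---invoke Proposition~\ref{prop:LFMeasures}\eqref{item:Invariance} after showing that the relevant leafwise measure is Haar---is exactly the paper's intended route. But the middle step contains a genuine error: the formula $\mu_x^{H} \propto \mu_x^{[\lambda]}\big|_{H}$ for a closed subgroup $H<U^{[\lambda]}$ is false in general. In the passage of Corollary~\ref{cor:equalityImpOnto} you cite, the restriction makes sense only because there the support of $\mu_x^U$ is already contained in the subgroup $V_1\times U_2$; restriction and leafwise measure coincide in that degenerate situation. Here, however, $I^{[\lambda]}$ may well be a one-dimensional line in $U^{[\lambda]}\cong\bR^2$, while (by Corollary~\ref{cor:SupportProjectsOnto}) $\supp\mu_x^{[\lambda]}$ projects onto both factors and need not lie on that line. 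For instance, $\mu_x^{[\lambda]}$ could be the product of Haar on $I^{[\lambda]}$ with a non-atomic transverse measure; then $\mu_x^{[\lambda]}\big|_{I^{[\lambda]}}=0$, and your formula gives nothing.

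The fix is to replace \emph{restriction} by \emph{conditioning}: the correct subgroup compatibility (the ``double conditioning'' the paper alludes to) says that $\mu_x^{I^{[\lambda]}}$ is, up to normalization, the conditional measure of $\mu_x^{[\lambda]}$ on the coset $I^{[\lambda]}=I^{[\lambda]}\cdot e$ with respect to the partition of $U^{[\lambda]}$ into $I^{[\lambda]}$-cosets. Since $\mu_x^{[\lambda]}$ is $I^{[\lambda]}$-translation invariant, each such conditional measure is Haar on its coset, so $\mu_x^{I^{[\lambda]}}$ is Haar and Proposition~\ref{prop:LFMeasures}\eqref{item:Invariance} applies. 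Equivalently, and this is what the paper's one-line reference really points to, one may quote the standard strengthening of that item: for $H<U$, the measure $\mu$ is $H$-invariant if and only if $\mu_x^{U}$ is $H$-invariant for a.e.\ $x$ (see \cite[\S7]{PisaNotes}).
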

This gives us enough information to analyse the ergodic joinings in  \ExampleOne.

\begin{proof}[Proof of Theorem \ref{thm:Example1Classi}]
 First observe, that each of the coarse Lyapunov subgroups is two-dimensional. By Proposition \ref{prop:mainStep} each of  the invariance groups is non-trivial. If one of them is two-dimensional, we have found an element for which Remark \ref{rema:NTtimesTrivial} applies, so $\mu$ must be the trivial joining. 
 
As we explained above, we know that if $e\neq u\in \LocalInvGp{}{\alpha}$, $L_u\subset \LocalInvGp{}{\alpha}$. Since any one-parameter subgroup of $U^{[\alpha]}$ is $\Phi(1,0)$-normalized (equivalently $A$-normalized), it might as well be that $I^{[\alpha]}$, and similarly all the other invariance subgroups $I^{[\lambda]}$, are one-dimensional. If one of them is supported on the axes, we will know that $\mu$ is trivial, again by applying Remark \ref{rema:NTtimesTrivial}. When none of them are, then each of them projects onto both of its factors, so the group generated by them must project onto $\SL_2(\bR)$ in each of the four factors (as $\SL_2(\bR)$ is generated $U^+$ and $U^-$). We implicitly apply Goursat's Lemma:  these invariance subgroups might be so compatible with each other, so the group generated by them will be the graph of an automorphism of $G_1$ into $G_2$, giving rise to a diagonal joining. Otherwise, there will be an element for which Remark \ref{rema:NTtimesTrivial} applies, resulting in the trivial joining. So this proves (modulo Proposition \ref{prop:mainStep}) everything we wanted to know about \ExampleOne.
\end{proof}

\subsubsection{Classification in \ExampleTwo}\label{subsec:Ex2Classi}
\begin{theo}\label{thm:Example2Classi} Any $\Phi_2(\bZ^2)$-invariant joining $\mu$ must be the trivial joining.
\end{theo}
To prove this theorem, it is enough to show the following:

\begin{lemm}\label{lem:mainDingEx2}
The group $\LocalInvGp{}{\alpha}$ must contain the group corresponding to the $x$-axis in the identification \eqref{LieAlgWeight}, that is the group $U^+\times\set{e}\times\set{e}\times\set{e}$.
\end{lemm}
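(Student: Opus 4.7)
The plan is to follow the strategy of the proof of Theorem \ref{thm:Example1Classi}, but to exploit the fact that $\Phi_2(1,0)$ acts on $\UAlpha\cong\bR^2$ through the \emph{non-conformal} diagonal matrix $\diag{e^{-1},e^{-2}}$ in order to force the invariance group to contain the $x$-axis. By Proposition \ref{prop:mainStep}, $I_x^{[\alpha]}$ is a non-trivial closed subgroup of $\UAlpha$ for $\mu$-almost every $x$. The first step is to repeat the Poincaré recurrence argument behind Lemma \ref{lem:everywhereConstant} to promote any non-trivial $u\in I_x^{[\alpha]}$ to the full one-parameter subgroup $L_u\subset I_x^{[\alpha]}$; that argument does not use conformality of the $A$-action on $\UAlpha$, so it carries over to the present setting. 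Consequently, for $\mu$-almost every $x$, the closed subgroup $I_x^{[\alpha]}$ is a union of one-parameter subgroups of $\bR^2$, and is therefore either a single line through the origin in the identification \eqref{LieAlgWeight} or all of $\UAlpha$.

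Next I would analyze which lines can occur via the equivariance \eqref{eq:AActionOnInv}. Under the above identification, $\Phi_2(0,1)$ acts trivially on $\UAlpha$ and $\Phi_2(1,0)$ induces on $\bP^1(\bR)$ the slope map $m\mapsto e^{-1}m$, which has exactly two fixed points: the $x$-axis $V^+=U^+\times\set{e}\times\set{e}\times\set{e}$ (attracting) and the $y$-axis $V^-=\set{e}\times\set{e}\times U^+\times\set{e}$ (repelling). The pushforward of $\mu$ under the measurable map $x\mapsto I_x^{[\alpha]}$ into $\bP^1(\bR)\sqcup\set{\UAlpha}$ is $\Phi_2(1,0)$-invariant, and the only $\Phi_2(1,0)$-invariant probability measures supported on this target space are convex combinations of the Dirac masses at $\UAlpha$, $V^+$, and $V^-$. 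Since each of these three subgroups is $A$-normalized, the preimages $\set{x:I_x^{[\alpha]}=\UAlpha}$, $\set{x:I_x^{[\alpha]}=V^+}$, and $\set{x:I_x^{[\alpha]}=V^-}$ are $A$-invariant measurable subsets of $X$, so $A$-ergodicity of $\mu$ forces exactly one of them to have full measure.

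The outcomes $\UAlpha$ and $V^+$ directly yield the lemma. For the outcome $V^-$, every non-trivial element of $V^-$ is of the form $(e,u')$ with $u'\in\set{e}\times U^+\subset G_2$ a non-trivial unipotent whose cyclic subgroup is unbounded, so Remark \ref{rema:NTtimesTrivial} would force $\mu$ to be the trivial joining $m_{X_1}\otimes m_{X_2}$. But for the trivial joining a direct computation gives that $\mu_x^{[\alpha]}$ is the Haar measure on all of $\UAlpha$, whence $I_x^{[\alpha]}=\UAlpha\neq V^-$, a contradiction that excludes this case. The main difficulty I foresee is the very first step, the Poincaré recurrence promotion $u\leadsto L_u$: while the technique is standard in the conformal setting of \ExampleOne, one must carefully verify that the leafwise-measure estimates of \textcite{LindenstraussQUE} still go through when the contracting element of $A$ acts non-conformally on $\UAlpha$; once this is in hand, the remainder is an $A$-ergodicity argument on $\bP^1(\bR)$.
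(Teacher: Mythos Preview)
Your Step 1 --- the promotion $u\leadsto L_u$ --- is where the gap lies, and it is not a technicality. You assert that the Poincaré recurrence argument ``does not use conformality'', but it does: in \ExampleOne\ the element $a=\Phi_1(1,0)$ acts on $\UAlpha$ by the scalar $e^{-1}$, so $a^{n}ua^{-n}=e^{-n}u$ stays on the line $L_u$, and the rescaled cyclic groups $\langle a^{n_k}ua^{-n_k}\rangle$ become dense in $L_u$. In \ExampleTwo, however, $a=\Phi_2(1,0)$ acts by $\diag{e^{-1},e^{-2}}$, so $a^{n}ua^{-n}=(e^{-n}u_1,e^{-2n}u_2)$ has slope $e^{-n}u_2/u_1$, which rotates toward the $x$-axis; the recurrence argument therefore does \emph{not} fill out $L_u$ when $u$ is off the axes.

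The paper exploits exactly this rotation. Its proof is a direct Lusin plus Poincaré recurrence argument: on a Lusin set $K_\epsilon$ where $x\mapsto \LocalInvGp{x}{\alpha}$ is continuous, take $e\neq u\in \LocalInvGp{x}{\alpha}$ and a recurrence sequence $a^{n_k}.x\to x$ inside $K_\epsilon$; then $\LocalInvGp{a^{n_k}.x}{\alpha}$ contains the cyclic group $\{(e^{-n_k}mu_1,e^{-2n_k}mu_2):m\in\bZ\}$, which (for $u_1\neq 0$) accumulates on the entire $x$-axis as $n_k\to\infty$, and continuity on $K_\epsilon$ forces the $x$-axis into $\LocalInvGp{x}{\alpha}$. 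So the very non-conformality that spoils your Step 1 is what delivers the lemma in one stroke, rendering your subsequent $\bP^1$-ergodicity argument unnecessary. Your handling of the residual case $u_1=0$ via Remark \ref{rema:NTtimesTrivial} is correct and patches a point the paper's sketch leaves implicit.
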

A similar statement (and proof) will hold for any $\lambda\in \WeightSet$; it is nonetheless enough to concentrate on $\lambda=\alpha$.
\begin{proof}[Proof of Lemma~\ref{lem:mainDingEx2}]
The main difference to \ExampleOne, is that not every subgroup of $\LocalInvGp{x}{\alpha}$ is $\Phi_2(\bZ^2)$-normalized.  Recall that  the element $\Phi_2(1,0)$ acts on $U^{[\alpha]}$ by multiplication with 
$\begin{pmatrix}
  e^{-1} & 0  \\
 0& e^{-2}\end{pmatrix}$, and $\Phi_2(0,1)$ acts trivially. 
Dynamically, we see that $\Phi_2(1,0)^n$ pushes elements not belonging to the axes, towards the $x$-axis. We claim that this, coupled with Poincaré recurrence, proves the Lemma, by an argument inspired by the work of  \textcite[p.~206-207]{EinsiedlerKatok}: let $\epsilon>0$ and use Lusin's Theorem \parencite[p.~76]{LusinREF} to find a set $K_\epsilon$ of measure $1-\epsilon$ on which $x\mapsto \LocalInvGp{x}{\alpha}$ is continuous. Denote $a=\Phi_2(1,0)$ and let $x\in K_{\epsilon}$, and $e\neq u\in \LocalInvGp{x}{\alpha}$ denote the element we get from Proposition \ref{prop:mainStep}.  Poincaré recurrence finds for us, for almost any $x\in K_{\epsilon}$, a subsequence   
\begin{equation}\label{eq:PoinSavesTheDay}
a^{n_k}x\to x \text{ with }a^{n_k}.x\in K_{\epsilon}.    
\end{equation}
It follows from \eqref{eq:AActionOnInv} that the group $\LocalInvGp{a^{n_k}.x}{\alpha}$ contains the image of  $\langle u\rangle$, under the action of $a^{n_k}$, that is $\langle a^{n_k}ua^{-n_k}\rangle$,   which is a “squashing” of $\langle u\rangle$ towards the $x$-axis (here, $\langle u\rangle$ is the group generated by $u$). By \eqref{eq:PoinSavesTheDay}, $\LocalInvGp{x}{\alpha}$ contains all elements in $\UAlpha$ which are limits of $\langle a^{n_k}ua^{-n_k}\rangle$; that is, the entire group corresponding to the $x$-axis. Taking $\epsilon\to 0$ we establish the above for $x$ in a conull set of $X$, as we wanted to show. 
\end{proof}

\begin{proof}[Proof of Theorem \ref{thm:Example2Classi} ] As in \S \ref{subsec:ClassInEx1}, we will have that $\mu$ will be invariant under $I^{[\alpha]}$. Lemma~ \ref{lem:mainDingEx2} says that $I^{[\alpha]}$ contains elements for which Remark \ref{rema:NTtimesTrivial} applies, showing that $\mu$ must be trivial.
\end{proof}

\subsection{Proof of Proposition \ref{prop:mainStep}}\label{subsec:proofMainStep}
\subsubsection{A small reformulation} 

\begin{lemm}\label{lemm:TransInvToPropo}
Proposition \ref{prop:mainStep} is equivalent to establishing that for any $\lambda\in\WeightSet$ and  almost any $x\in X$,   
\begin{equation}\label{eq:proporInsteadofInv}
\exists e\neq u\in U^{[\lambda]},\quad (R_u)_*\mu_x^{[\lambda]}\propto\mu_x^{[\lambda]}.    
\end{equation}
\end{lemm}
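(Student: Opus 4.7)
The lemma asserts an equivalence; one direction is immediate. If $u \in I_x^{[\lambda]} \setminus \{e\}$, then $(R_u)_*\mu_x^{[\lambda]} = \mu_x^{[\lambda]}$, so in particular $(R_u)_*\mu_x^{[\lambda]} \propto \mu_x^{[\lambda]}$, verifying \eqref{eq:proporInsteadofInv}. My plan addresses the substantive converse direction, where one must bootstrap mere proportionality into an honest equality.

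The starting point is to organize the proportionality data into a subgroup with a character. I would define
\begin{equation*}
J_x^{[\lambda]} := \set{u \in U^{[\lambda]}:\,(R_u)_*\mu_x^{[\lambda]}\propto\mu_x^{[\lambda]}},
\end{equation*}
observe it is a closed subgroup of $U^{[\lambda]}$, and record that the proportionality constant defines a continuous character $\chi_x : J_x^{[\lambda]} \to \bR^+$ via $(R_u)_*\mu_x^{[\lambda]} = \chi_x(u)\,\mu_x^{[\lambda]}$. By construction $I_x^{[\lambda]} = \ker \chi_x$, so, assuming \eqref{eq:proporInsteadofInv} the conclusion of Proposition \ref{prop:mainStep} is equivalent to the vanishing $\chi_x \equiv 1$ almost everywhere. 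Applying the equivariance property \eqref{eq:equivariancy} of leafwise measures yields, for each $a\in A$, the compatibility $J_{a.x}^{[\lambda]} = a J_x^{[\lambda]} a^{-1}$ and the crucial identity $\chi_{a.x}(a u a^{-1}) = \chi_x(u)$.

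The triviality of $\chi_x$ will be extracted by a Poincaré-recurrence–plus–Lusin argument in the spirit of the proof of Lemma~\ref{lem:mainDingEx2}. Fix $\epsilon > 0$ and use Lusin's theorem to find a set $K_\epsilon \subset X$ of measure at least $1 - \epsilon$ on which the measurable assignment $x \mapsto \mu_x^{[\lambda]}$ (with its normalization $\mu_x^{[\lambda]}(B_1^{U^{[\lambda]}}) = 1$) is continuous for the weak-$*$ topology on locally finite measures on $U^{[\lambda]}$. Pick $a \in A$ strictly contracting $U^{[\lambda]}$, which exists because $[\lambda]$ is a coarse Lyapunov weight. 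Poincaré recurrence for $a$ applied to $K_\epsilon$ yields, for $\mu$-a.e.\ $x \in K_\epsilon$, a sequence $n_k \to \infty$ with $a^{n_k}.x \in K_\epsilon$ and $a^{n_k}.x \to x$; simultaneously $a^{n_k} u a^{-n_k} \to e$ for any fixed $u \in U^{[\lambda]}$.

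Using the hypothesis \eqref{eq:proporInsteadofInv}, pick $u \in J_x^{[\lambda]} \setminus \{e\}$ and apply the equivariance identity to obtain
\begin{equation*}
(R_{a^{n_k} u a^{-n_k}})_* \mu_{a^{n_k}.x}^{[\lambda]} = \chi_x(u)\,\mu_{a^{n_k}.x}^{[\lambda]}.
\end{equation*}
Passing to the weak-$*$ limit along $k \to \infty$, the left-hand side converges to $(R_e)_* \mu_x^{[\lambda]} = \mu_x^{[\lambda]}$ by joint continuity of translation on weak-$*$ convergent sequences of locally finite measures, while the right-hand side converges to $\chi_x(u)\,\mu_x^{[\lambda]}$. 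Since $\mu_x^{[\lambda]}$ is non-zero, I conclude $\chi_x(u) = 1$, so $u \in I_x^{[\lambda]}$, which proves the desired non-triviality after letting $\epsilon \downarrow 0$. The most delicate part of the execution will be verifying the weak-$*$ continuity of $x \mapsto \mu_x^{[\lambda]}$ on a Lusin set and the joint continuity of $(x, u) \mapsto (R_u)_*\mu_x^{[\lambda]}$; both are standard once the normalization is fixed and the ambient topology (vague topology on locally finite measures) is chosen carefully, but they are not conceptual obstacles.
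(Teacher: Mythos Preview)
Your proposal is correct and follows essentially the same approach the paper points to: the paper's proof is a one-line reference to Poincar\'e recurrence, specifically to \textcite[Lemma~7.3]{LindenstraussQUE} and to the argument of Lemma~\ref{lem:mainDingEx2}, and your write-up is a faithful execution of exactly that strategy (Lusin set plus recurrence along an element contracting $U^{[\lambda]}$ to force the proportionality constant to~$1$). Your organization via the character $\chi_x$ on the closed subgroup $J_x^{[\lambda]}$ is a clean way to phrase the Lindenstrauss argument, and the joint-continuity verification you flag is indeed routine once the normalization and vague topology are fixed.
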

\begin{proof}
  This follows again from Poincaré recurrence and can be proven very similarly to \textcite[Lemma~7.3]{LindenstraussQUE} or using a similar argument to the one in the proof of Lemma~\ref{lem:mainDingEx2}.
\end{proof}
Note that if we find two points $x,y\in X$ with $\LWM{x}{\lambda}=\LWM{y}{\lambda}$ and $x=u.y$ with $e\neq u\in \UAlpha$, that is, two points with identical leafwise measure on the same $U^{[\lambda]}$-leaf,  we get from  \eqref{eq:shifty} that 
\begin{equation}\label{eq:OnTheSameLeaf}
\pa{R_{u}}_{*}\LWM{x}{\lambda}=\pa{R_{u}}_{*}\LWM{u.y}{\lambda}\propto \LWM{y}{\lambda}= \LWM{x}{\lambda}
\end{equation}
So $x$ will satisfy \eqref{eq:proporInsteadofInv} with  $e\neq u\in \UAlpha$. This led Lindenstrauss to the following idea.
\subsubsection{An optimistic idea: using Ratner's H-principle}\label{subsec:OptimisticIdea}
For concreteness, assume that $\lambda=\alpha$ and consider \ExampleOne (that is, with  $A=\Phi_1(\bZ^2)$). Using Poincaré recurrence for the action of $\Phi_1(0,1)$, which commutes with $\UAlpha$, we can find many pairs of nearby points $x,y\in X$, with $\LWM{x}{\alpha}=\LWM{y}{\alpha}$, and arbitrarily small displacement $g$,  but not necessarily belonging to the same $\UAlpha$-leaf. That is, the displacement $g$ does not necessarily belong to $\UAlpha$ (see Lemma~\ref{lem:InputHMAchine} below for a more precise statement). Then, in order to achieve a pair of points with the same leafwise measure and on the same $\UAlpha$-leaf, Lindenstrauss had the following, a priori extremely optimistic, idea to use the $H$-principle of Ratner as follows: First note, that by \eqref{eq:shifty}, shearing two points $x$ and $y$ with  $\LWM{x}{\alpha}=\LWM{y}{\alpha}$ with the \emph{same} $u\in\UAlpha$ preserves the equality of the leafwise measures. That is, for any $u\in \UAlpha$
\begin{equation}
    \LWM{x}{\alpha}=\LWM{y}{\alpha}\implies \LWM{u.x}{\alpha}=\LWM{u.y}{\alpha}.
\end{equation}
To see where this is going, think for a moment about $\SL_2(\bR)$ and $U^+$, the upper unipotent subgroup. Ratner's H-principle tells us that shearing along $U^{+}$, a pair of nearby points in general position with respect to each other, for the right amount of time, will give us two sheared points that differ, non-trivially, only in the $U^+$ direction, up to a small error, which tends to zero with the initial displacement (We will formulate  this more precisely for the case at hand in Lemma~\ref{lemm:HPrinciple}).
Doing this with pairs having displacement tending to $e$, and taking a limit of the sheared pairs, we find  a pair of points, differing non-trivially, exactly in the $U^+$-direction, i.e., belonging to the same $U^+$-leaf. This seminal observation of Ratner  essentially amounts to a careful analysis of the matrix multiplication in \eqref{eq:RatnerHPrinHeart}.

So if we pretend that the map $x\mapsto\LWM{x}{\alpha}$ is continuous, and find many nearby pairs of points $(x_n,y_n)$ in general position with $\LWM{x_n}{\alpha}=\LWM{y_n}{\alpha}$, the H-principle will give us two points with identical leafwise measure on the same $\UAlpha$-leaf.   

But there is one caveat: the measurable map $x\mapsto\LWM{x}{\alpha}$ is a priori not  continuous. We can try to use Lusin's Theorem to find a compact set $K$ of arbitrarily large measure on which $x\mapsto\LWM{x}{\alpha}$ is continuous. But then, trying to restrict the argument to $K$ stirs up many serious complications. Seeing them, the author of this survey would have turned back and given up. As we will outline below, \textcite{LindenstraussQUE} chose to face them and dealt with each of them with an astonishing mastery.
\subsubsection{Preparations for the $H$-principle}
Before we follow Lindenstrauss' footsteps to face all the difficulties that arise, let us write, in our “joined” setting, the information we have for running the H-principle of Ratner.
\begin{lemm}[Input to the H-machine]\label{lem:InputHMAchine}
Let $X'$ be a set of positive measure and consider a sequence  $\delta_n\to 0$.  We can find two sequences $\pa{x_n}_{n=1}^\infty,\,\pa{y_n}_{n=1}^\infty\subset X'$ with
\begin{enumerate}
    \item \label{item:nearbyPairs} $g_n.x_n=y_n$ and $d(g_n,e)<\delta_n$,
    \item \label{item:goodDisplacement} $g_n=(g_n^{(1)},g_n^{(2)},g_n^{(3)},g_n^{(4)})$ with both $g_n^{(1)},g_n^{(3)}\notin U^+$,
    \item \label{item:sameLFW}$\mu_{x_n}^{[\alpha]}=\mu_{y_n}^{[\alpha]}$ for all $n\in\bN$.
\end{enumerate}
\end{lemm}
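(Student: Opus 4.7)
The natural element to drive the recurrence is $a:=\Phi_1(0,1)=(e,a_1,e,a_1)$: since its nontrivial components lie in the two factors in which $\UAlpha=U^+\times\set{e}\times U^+\times \set{e}$ is trivial, conjugation by $a$ acts as the identity on $\UAlpha$. Hence Proposition~\ref{prop:LFMeasures}\eqref{item: equivary} (together with the chosen normalization) gives $\mu_{a^n.x}^{[\alpha]}=\mu_x^{[\alpha]}$ for every $n\in\bZ$ and $\mu$-a.e.~$x$. Applying Poincar\'e recurrence to the $a$-action on the positive-measure set $X'$, for $\mu$-a.e.~$x\in X'$ I obtain a sequence $n_k\to\infty$ with $a^{n_k}.x\in X'$ and $a^{n_k}.x\to x$. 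Inside the injectivity radius we write $a^{n_k}.x=g_{n_k}.x$ with $g_{n_k}\in G$ near $e$, so that $g_{n_k}\to e$. Extracting a subsequence with $d(g_{n_k},e)<\delta_n$ and setting $x_n:=x$, $y_n:=g_{n_k}.x$ already forces items~(1) and~(3): (1) by construction, and (3) by the $a$-invariance of $\mu_x^{[\alpha]}$ just established.

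\textbf{Generic direction of the displacement.} The actual content is item~(2), requiring $g_n^{(1)},g_n^{(3)}\notin U^+$. Define the "bad set" $B:=\set{g\in G: g^{(1)}\in U^+}\cup\set{g\in G: g^{(3)}\in U^+}$; this is a closed subvariety of $G$ of positive codimension (codimension~$2$ in each of the first and third $\SL_2$-factors). Since $\mu$ projects to the Haar measures $m_{X_1},m_{X_2}$ and the condition "$g^{(1)}\in U^+$" (resp.\ "$g^{(3)}\in U^+$") pushes forward to a positive-codimension, and hence Haar-null, condition in the $X_1$ (resp.\ $X_2$) factor, the "bad tube" $\set{g.x:g\in B\cap V}$ around $x$ (with $V$ a fixed neighborhood of $e$) has $\mu$-measure zero. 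Applying Birkhoff's ergodic theorem to each ergodic component of $\mu$ under the $a$-action (should $\mu$ not itself be $a$-ergodic), the density of times $n$ for which $a^n.x$ falls into this bad tube is zero, while the density of times landing in the whole neighborhood $V.x$ is positive. Hence along a density-one sub-subsequence of $\{n_k\}$ the displacement $g_{n_k}$ avoids $B$, and item~(2) holds.

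\textbf{Main obstacle.} The delicate step is the last one, converting the abstract $\mu$-nullity of the bad tubes into avoidance by the discrete recurrence times $n_k$. The cleanest implementation is to combine Birkhoff (or a maximal inequality) with a Fubini-type argument using a flow-box decomposition transverse to the $a$-orbits, so that returns into a fixed transverse section distribute according to a measure absolutely continuous with respect to $\mu$ restricted to the section. Handling the case where $a$ is not itself $\mu$-ergodic via the ergodic decomposition is routine but must be carried out carefully so that the null-measure statement for the bad tubes transfers to almost every ergodic component.
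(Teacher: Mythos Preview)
Your treatment of items (1) and (3) via Poincar\'e recurrence for $b:=\Phi_1(0,1)$ and the commutation of $b$ with $\UAlpha$ is the same as the paper's and is correct.

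The argument for item (2) has a genuine gap. The bad tube $N_x=\set{g.x:g\in B\cap V}$ depends on the base point $x$, and Birkhoff's theorem only controls visit frequencies to a \emph{fixed} measurable set, not to a set that moves with the orbit's initial point. Your proposed flow-box fix does not resolve this: in the setting you describe, nothing prevents the entire $b$-orbit of $x$ from being trapped inside $N_x$. Indeed, if the lattice $\Gamma_1<\SL_2(\bR)\times\SL_2(\bR)$ were \emph{reducible}, say $\Gamma_1=\Lambda_1\times\Lambda_2$, then $b=(e,a_1)$ moves only the second $\SL_2$-factor, so every return of $b^n.x_1$ to a neighbourhood of $x_1$ has displacement with first component exactly $e\in U^+$: the bad tube, though Haar-null, captures \emph{all} returns. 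Since your argument never invokes irreducibility of $\Gamma_1$ (or $\Gamma_2$), it cannot be correct as written.

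The paper takes a structural rather than measure-theoretic route for (2): assuming a bad return $y=b^r.x=g.x$ with $g^{(1)}\in U^+$, it applies powers of $a:=\Phi_1(1,0)$, which contracts $U^+$ in the first component, along a Poincar\'e-recurrent subsequence $a^{m_k}.x\to x$. This drives the first component of the displacement to $e$ while the relation $y=b^r.x$ persists, producing in the limit a nontrivial lattice element of the form $(e,h)\in\Gamma_1$, contradicting irreducibility. A measure-theoretic alternative that \emph{does} work is to argue over the countable set $\Gamma_1$: for each $\gamma_1\in\Gamma_1$ with $\gamma_1^{(1)}\neq e$, the condition $(g_1^{(1)})^{-1}\gamma_1^{(1)}g_1^{(1)}\in U^+$ cuts out a Haar-null set of base points, while the case $\gamma_1^{(1)}=e$ forces $\gamma_1=e$ by irreducibility and hence gives no nontrivial small return. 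Either way, irreducibility is the missing ingredient.
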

\begin{proof}
This essentially follows from  Poincaré recurrence for the map $b:=\Phi_1(0,1)=(e,b_1,e,b_2)$ and irreducibility of $\Gamma_1$ and $\Gamma_2$. Indeed, we can assume first  without loss of generality that $\LWM{x}{\alpha}$ is defined for any $x\in X'$. The element $b$ preserves $\mu$ so by Poincaré recurrence we can find a sequence $n_k\to \infty$ with $b^{n_k}.x\to x$ and $b^{n_k}.x\in X'$. As $b$ commutes with $\UAlpha$, we also have by \eqref{eq:equivariancy}, that for every $x\in X'$ and $k\in \bN$,  $\LWM{x}{\alpha}=\LWM{b^{n_k}.x}{\alpha}$.    This generates for us pairs satisfying Items \ref{item:nearbyPairs} and \ref{item:sameLFW}. To show Item \ref{item:goodDisplacement}, assume for contradiction that we found a pair of nearby points $x,y$ with $g.x=y=b^r.x$ for some large $r\in \bN$ and with  $g^{(1)}\in U^+$. Note that the first component of $\UAlpha$, $U^+$, is contracted by $a:=\Phi_1(1,0)$. By Poincaré recurrence, we can find a sequence $m_k\to\infty$ with $a^{m_k}.x\to x$. Acting on  $g.x=b^r.x$ with a subsequence of $a^{m_k}$, we contract $g^{(1)}$ to the identity and find a pair of nearby points $x,y'$ with 
$$
\pa{e,g^{(2)},*,*}.x=\pa{e,b_1^r,e,b_2^r}.x
$$ with $g^{(2)}$ small and $b_1^r$ large, so $\Gamma_1$ must contain a non-trivial element of the form $(e,h)$, which is a contradiction to its irreducibility. The same argument works for $g^{(3)}$, and also for $\Phi_2$ instead of $\Phi_1$.
\end{proof}

\subsubsection{Polynomial divergence - how much  should we shear?}
Still ignoring the fact that $x\mapsto\LWM{x}{\alpha}$ is not continuous, we fix some notation and explain how much we  need to shear the pairs of nearby points we get from Lemma~\ref{lem:InputHMAchine}: for  $g_n=(g_n^{(1)},g_n^{(2)},g_n^{(3)},g_n^{(4)})$, the displacements between $x_n$ and $y_n$ from  Lemma~\ref{lem:InputHMAchine},  we write
\begin{equation}\label{eq:bothDisplacements}
    g_n^{(1)}=\begin{pmatrix} a_n & b_n  \\ c_n & d_n\end{pmatrix},\, g_n^{(3)}=\begin{pmatrix} \tilde a_n & \tilde b_n  \\ \tilde c_n & \tilde d_n\end{pmatrix}.
\end{equation}
For $\delta_n$ small enough, once we shear $x_n$ and $y_n$ with $u_n\in \UAlpha$ we get that the displacement between $u_n.x_n$ and $u_n.y_n$ is $u_ng_nu_n^{-1}$. We would like to choose $u_n\in \UAlpha$ such that $\set{u_ng_nu_n^{-1}}_{n\in\bN}$ will contain a non-trivial element of $\UAlpha$ as an accumulation point. 

\begin{lemm}\label{lemm:HPrinciple}
 Recall the notation \eqref{eq:CoordForUAl} and the setting and notation of Lemma~\ref{lem:InputHMAchine}.  Assuming $\delta_n<\delta_0$ for $\delta_0$ small enough, we have the following: for any $\rho\in (0,1)$ there are  $C>0$ and  $\pa{S_n}_{n=1}^\infty$ with $S_n\stackrel{\delta_n\to 0}{\longrightarrow}\infty$ such that for any  
 $$
 s_1^{(n)},s_2^{(n)}\in T(\rho,S_n):=[\rho S_n,S_n]\cup [ -S_n,-\rho S_n],
 $$
 there exist $\sigma_1,\sigma_2$ with at least one of them belonging to  $[\tfrac{1}{C},C]$ satisfying 
 \begin{equation}
     u_n.y_n= g_n'u_\sigma u_n.x_n,\,\text{ with } d(g_n',e)<\delta_n^{\frac12}.
 \end{equation}
where $u_n,u_\sigma\in U^{[\alpha]}$ are the elements with $u_n(s_i)=s_i^{(n)}$  and $u_\sigma(s_i)=\sigma_i$ for $i=1,2$. 
\end{lemm}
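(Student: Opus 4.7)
The plan is to make the matrix computation underlying Ratner's $H$-principle explicit in the coordinates \eqref{eq:CoordForUAl} on $\UAlpha$. Since $\UAlpha$ is trivial on the second and fourth factors of $G=(\SL_2(\bR))^4$, the displacement $u_n g_n u_n^{-1}$ between $u_n.x_n$ and $u_n.y_n$ retains the small components $g_n^{(2)}, g_n^{(4)}$ from Lemma~\ref{lem:InputHMAchine}\eqref{item:nearbyPairs} untouched, so the problem reduces to the conjugation computation on the first and third $\SL_2$-factors, to be carried out simultaneously.

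The starting point is the direct computation
$$\smallmat{1 & s \\ 0 & 1}\smallmat{a & b \\ c & d}\smallmat{1 & -s \\ 0 & 1} = \smallmat{a+sc & s(d-a)-s^2c+b \\ c & d-sc},$$
which is the heart of the polynomial-divergence phenomenon: shearing by $s$ drives the $(1,2)$-entry by a quadratic polynomial in $s$, while the "bad" $(2,1)$-entry $c$ is fixed. Choosing $s$ so that $s^2|c|$ is of order~$1$ while $s|c|$ is still small produces a near-unipotent upper-triangular element with $(1,2)$-coordinate of bounded, nonzero size.

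To implement this on both factors simultaneously I will set $S_n:=\max(|c_n|,|\tilde c_n|)^{-1/2}$. Since $g_n^{(1)}, g_n^{(3)}\notin U^+$ forces $c_n,\tilde c_n\neq 0$, and $d(g_n,e)<\delta_n$ gives $\max(|c_n|,|\tilde c_n|)\leq K\delta_n$ for some metric constant $K$, we have $S_n\to\infty$. For any $s_i^{(n)}\in T(\rho,S_n)$ let $M_i$ denote the conjugation in the $(2i{-}1)$-th factor, and define $\sigma_i:=M_i^{(1,2)}/M_i^{(1,1)}$; then $M_i u_{\sigma_i}^{-1}$ has vanishing $(1,2)$-entry, while the remaining entries differ from those of $I$ by at most $|s_i^{(n)}c_n|\leq\sqrt{|c_n|}\leq\sqrt{K\delta_n}$ together with additional terms of order $\delta_n$. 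Combining the four components of $g_n'=u_n g_n u_n^{-1} u_\sigma^{-1}$, this yields $d(g_n',e) = O(\delta_n^{1/2})$, and the prefactor can be absorbed into the bound $\delta_n^{1/2}$ by shrinking $\delta_0$ (or equivalently, by rescaling the input sequence $\delta_n$). On the factor realising $\max(|c_n|,|\tilde c_n|)$, the numerator of $\sigma_i$ is dominated by $-(s_i^{(n)})^2 c$ (the other terms being $O(\delta_n^{1/2})$), whose absolute value lies in $[\rho^2,1]$ by construction; together with $M_i^{(1,1)}=1+O(\delta_n^{1/2})$ this gives $|\sigma_i|\in[1/C,C]$ with $C$ depending only on $\rho$.

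The main obstacle I foresee is the asymmetry between the two factors: \emph{a priori} $|c_n|$ and $|\tilde c_n|$ may be of wildly different orders, and if $|\tilde c_n|\ll|c_n|$ then for $s_2^{(n)}\in T(\rho,S_n)$ the product $(s_2^{(n)})^2\tilde c_n$ is only bounded \emph{above}, so $|\sigma_2|$ cannot be controlled from below. This is precisely why the conclusion only asserts that at least one of $\sigma_1,\sigma_2$ lies in $[1/C,C]$: the polynomial scale of the \emph{larger} of the two lower-left entries dictates the common shear scale $S_n$, and only on that factor is the resulting $(1,2)$-coordinate automatically bounded away from $0$ and from $\infty$.
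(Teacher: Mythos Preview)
Your approach has the right overall shape (the conjugation calculation and the reduction to the first and third factors are exactly right), but the choice of scale $S_n=\max(|c_n|,|\tilde c_n|)^{-1/2}$ has a genuine gap.

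First, the claim that $g_n^{(1)},g_n^{(3)}\notin U^+$ forces $c_n,\tilde c_n\neq 0$ is false: $g\notin U^+$ only excludes upper-triangular \emph{unipotent} matrices, so $g=\bigl(\begin{smallmatrix}a&b\\0&a^{-1}\end{smallmatrix}\bigr)$ with $a\neq 1$ is perfectly possible. In that case your $S_n$ is undefined.

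Second, and more seriously, even when $c_n\neq 0$ your scale ignores the linear coefficient $d_n-a_n$ in the $(1,2)$-entry $b_n+(d_n-a_n)s-c_ns^2$. You only know $|c_n|\leq K\delta_n$, not $|c_n|\gtrsim\delta_n$, so if for instance $|c_n|=\delta_n^{100}$ while $|d_n-a_n|\asymp\delta_n$, then at your scale $|s|\sim|c_n|^{-1/2}=\delta_n^{-50}$ the linear term $(d_n-a_n)s$ is of size $\delta_n^{-49}$, swamping the quadratic term. Your $\sigma_1$ is then enormous rather than in $[1/C,C]$, and the assertion that ``the other terms are $O(\delta_n^{1/2})$'' fails. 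The paper's remedy is to build the linear coefficient into the scale: it sets $C=1/\rho$ and
\[
S_n=\min\bigl(S_n^{(1)},S_n^{(2)}\bigr),\qquad S_n^{(i)}=\min\Bigl(\tfrac{C}{|d_i-a_i|},\ \tfrac{\sqrt{C}}{\sqrt{|c_i|}}\Bigr),
\]
so that whichever of the two mechanisms (linear or quadratic) governs the growth on each factor determines $S_n$. At this scale one of $|(d_i-a_i)s|$ or $|c_is^2|$ lands in $[\rho C,C]=[1,C]$ (for the factor achieving the outer minimum) while all diagonal and lower-left perturbations stay $O(\delta_n^{1/2})$, giving the required $\sigma\in[1/C,C]$ on that factor. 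Your last paragraph correctly diagnoses the asymmetry between factors, but the same asymmetry between the linear and quadratic terms \emph{within} a single factor is what you missed.
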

In words: for a given $\rho\in (0,1)$ we can find a “timescale” $S_n$ such that shearing $x_n$ and $y_n$ with any element of $\UAlpha$ with coordinates $(s_1,s_2)$ in the “time window” $T(\rho,S_n)$ results in two points which differ, up to a small correction $g_n'$,  by an element $u_\sigma\in \UAlpha$ belonging, at least in one of its components, to a fixed (that is, independent of $n$) compact set of non-trivial unipotent elements. 
\begin{proof}
 We choose $C=\frac{1}{\rho}$ and choose $S_n$ as a \emph{minimum} as follows:
 \begin{equation}\label{eq:choiceOfS}
     S_n=\min\pa{S_n^{(1)},S_n^{(2)}} \text{ with } S_n^{(i)}=\min\pa{\frac{C}{\av{d_i-a_i}}, \frac{\sqrt{C}}{\sqrt{\av{c_i}}}},\,i=1,2.
 \end{equation}
 The fact the lemma holds with these choices, goes back to Ratner and is based on the following calculation in $\SL_2(\bR)$: let $g=\bigl(\begin{smallmatrix}  a & b  \\ c & d\end{smallmatrix}\bigr)$ and assume that $g$ is close to the identity, that is, that $\av{a-1},\av{c},\av{b},\av{d-1}<\delta$. We think of $g$ as the displacement between two $\delta$-close points $x$ and $y$ in general position, for a small $\delta$. We calculate now the displacement $g_s$ between $u.x$ and $u.y$: 
\begin{equation} \label{eq:RatnerHPrinHeart}
  g_s:= \begin{pmatrix} 1 & s  \\ 0 & 1\end{pmatrix}
   \begin{pmatrix} a & b  \\ c & d\end{pmatrix}
 \begin{pmatrix} 1 & -s  \\ 0 & 1\end{pmatrix}= \begin{pmatrix} a+cs & b+(d-a)s-cs^2  \\ c &    d-cs\end{pmatrix}.
\end{equation}
For the given $\rho\in (0,1)$ we define $C=\tfrac{1}{\rho}$ as above and 
$S=\min\pa{\frac{C}{\av{d-a}}, \frac{\sqrt{C}}{\sqrt{\av{c}}}}$. We then have for any  $s\in T(\rho,S)$, that  $g_s\approx \bigl(\begin{smallmatrix} 1 & \sigma  \\ 0 & 1\end{smallmatrix}\bigr)$, for $\sigma\in [\tfrac{1}{C},C]$, where $\approx$ stands here for an error smaller then $\delta^{1/2}$. See also \textcite[Lemmata 7.4\&7.5]{LindenstraussQUE}. 

The above calculation  should be done now for both $g_n^{(1)}$ and $g_n^{(3)}$. As we choose $S_n$ in \eqref{eq:choiceOfS} as a minimum, we can only guarantee that one of the resulting $\sigma$'s will belong to the interval $[\tfrac{1}{C},C]$.  See also Remark \ref{rem:SimulInBothFactors} below.
\end{proof}
We refer below to $S_n$, as the “\emph{shearing timescale}” (for $x_n$ and $y_n$), and to $T(\rho,S_n)$ as the “\emph{shearing time-window}”.
\begin{rema}\label{rem:SimulInBothFactors}
We are following the footsteps of the work of \textcite{LindenstraussQUE}. But this is where we start to see a slight difference: due to the fact that in our situation we shear according to \emph{two} displacements $g_n^{(1)},g_n^{(3)}$ and the shearing group $\UAlpha$ is two-dimensional, we are forced to choose the shearing timescale $S_n$ as a minimum between $S_n^{(1)}$ and $S_n^{(2)}$. Note that $S_n^{(1)}$ (resp.~$S_n^{(2)}$) is the time needed for the displacement $g_n^{(1)}$ (resp.~$g_n^{(3)}$) to grow in the $U^+$-direction. It might as well be (and we cannot control this at all since the displacements are given to us from Poincaré recurrence) that the timescale $S_n$ was not large enough for \emph{both} $g_n$ and $g_n'$ to grow. Indeed, for all we know, it may be that $\av{c_n}$ and $\av{d_n-a_n}$ might be tiny in comparison to $\av{\tilde c_n}$ and $\av{\tilde d_n- \tilde a_n}$. This must be taken into account once we admit that $x\mapsto \LWM{x}{\alpha}$ is not continuous. 
\end{rema}

\subsubsection{Maximal ergodic theorems}\label{subsec:MaxErgTheo}
Let $\epsilon >0$. As said above, by Lusin's Theorem we can find a compact set $K=K_\epsilon$ of measure $>1-\epsilon$, with $x\mapsto \LWM{x}{\alpha}$ being continuous when restricted to $K$. Using Lemmata \ref{lem:InputHMAchine} and \ref{lemm:HPrinciple} and their notation, we see that we “just” need to make sure that for every $n\in \bN$ we find $u_n\in \UAlpha$ with
\begin{enumerate}
    \item\label{item:InTimeWindow} $u_n(s_1)$ and $u_n(s_2)$ belonging the time window $T(\rho,S_n)$, and
    \item\label{item:InK} the sheared points $u_n.x_n$, $u_n.y_n$ belonging to $K$.
\end{enumerate}
If we could ensure this, we would choose accumulation points $x$ and $y$ of $\set{u_n.x_n}, \set{u_n.y_n}$ to find a $u\neq e$ and $x$ and $y$  as in \eqref{eq:OnTheSameLeaf} leading to $u\neq e$ and $x$ satisfying  \eqref{eq:proporInsteadofInv}. But how do we actually know if such $u_n$ exist?

To discuss this further, let's denote by  
$$
B_S(\UAlpha)=\set{u\in \UAlpha:u_n(s_1),u_n(s_2)\in [-S,S]}.
$$ 
A maximal ergodic theorem will help us   “measure” the subset of  $B_S(\UAlpha)$ whose elements satisfy Item \ref{item:InK}. More precisely, first notice that if $\mu$ was $\UAlpha$-invariant (which is a ridiculous assumption, since if this invariance was known to us, we wouldn't be having this discussion), then we could have used a maximal ergodic theorem with respect to the action of $\UAlpha\cong\bR^2$. Such a theorem will tell us that we can find a set $X'=X'(K_\epsilon)$ of measure $1-C\epsilon^{1/2}$ (with $C$ being a universal constant), such that for every $x\in X'$ and for all timescales $S$, we have
\begin{equation}
m_{\UAlpha}\pa{\set{u\in B_S(\UAlpha): u.x\in K}}\geq \pa{1-\epsilon^{\frac{1}{2}}  }  m_{\UAlpha}\pa{B_S(\UAlpha)}.
\end{equation}
where $m_{\UAlpha}$ denotes, under the identification \eqref{LieAlgWeight}, the Borel measure on $\bR^2$. Having this for $x_n$ and $y_n$ as above would definitely help to choose $u_n$ as above. 

But $\mu$ cannot be assumed to be $\UAlpha$-invariant. The only thing we know is that $\mu$ is $A$-invariant. As $\UAlpha$ is $A$-invariant, the only information connecting $\mu$ and $\UAlpha$ is that there is a $\UAlpha$-invariant foliation of the space. Lindenstrauss (together with Rudolph) proves in the Appendix of \textcite{LindenstraussQUE},  that this information is enough in order to deduce the existence of $X'=X'(K_\epsilon)$ exactly as above,  satisfying the exact statement as above, but with $m_{\UAlpha}$ interchanged with $\LWM{x}{\alpha}$: there exists a set $X'=X'(K_\epsilon)$ of measure $1-C\epsilon^{1/2}$ (with $C$ a universal constant), such that for every $x\in X'$  
\begin{equation}\label{eq:MaxErgForLWM}
\forall S>0\,\,\LWM{x}{\alpha}\pa{\set{u\in B_S(\UAlpha): u.x\in K}}\geq \pa{1-\epsilon^{\frac{1}{2}}  }  \LWM{x}{\alpha}\pa{B_S(\UAlpha)}.
\end{equation}
This gives us great information with respect to satisfying Item \ref{item:InK}, but the minute we try to couple this with satisfying Item \ref{item:InTimeWindow} we face another problem: 
let 
$$
F(\rho,S)=\set{u\in \UAlpha: u_n(s_1),u_n(s_2)\in T(\rho,S)}\subset B_S(\UAlpha)
$$
be the points that satisfy Item \ref{item:InTimeWindow}. Note that $F(\rho,S_n)$ is the set of all elements belonging to the timescale $S_n$ and keeping distance $\rho S_n$ from the axes, i.e.~the complement in $B_{S_n}(\UAlpha)$ of a $\rho S_n$-thickening of the axes. So
in order to satisfy Items \ref{item:InTimeWindow} and \ref{item:InK} simultaneously, we need to know, for the $x_n$'s, $y_n$'s and all timescales $S_n$ given to us by Lemma~\ref{lemm:HPrinciple}, that $\LWM{x}{\alpha}(F(\rho,S_n))$ is large in comparison to $\LWM{x}{\alpha}(B_S(U))$. In other words, that $\LWM{x}{\alpha}$ is not concentrated near the axes for a typical $x\in X$. How can this be guaranteed? The short answer is, that it can't. We, or more precisely Lindenstrauss, needed to find a workaround. Before describing it, let's see what we can guarantee.

\subsubsection{Where the joining assumption is used}
 As a first step, let's show that $\LWM{x}{\alpha}$ is not supported \emph{on} the axes. This is based on the following fact about leafwise measures:
\begin{lemm}
  Let $U$ be one of the coarse Lyapunov subgroups above and $U'<U$ an $A$-normalized connected subgroup. We have $$\mu_x^U(U')>0 \iff
      \mu_x^U=\mu_x^{U'}.$$
\end{lemm}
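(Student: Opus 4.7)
The $(\Leftarrow)$ direction is essentially a bookkeeping check against the normalization of Proposition~\ref{prop:LFMeasures}. If $\mu_x^U$ coincides with $\mu_x^{U'}$ after identifying $U'$ with its image under $U' \hookrightarrow U$, then $\supp(\mu_x^U)\subset U'$, so $\mu_x^U(U')\geq \mu_x^U(B_1^U)=1>0$.

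For $(\Rightarrow)$, the plan has three steps. \textbf{Step 1 (ergodicity boost).} The set $\{x: \mu_x^U(U')>0\}$ is $A$-invariant: indeed, by \eqref{eq:equivariancy} and the $A$-normalization of $U'$ (so that $\theta_a(U')=U'$ for every $a\in A$), one has $\mu_{a.x}^U(U')\propto \mu_x^U(\theta_a^{-1}(U'))=\mu_x^U(U')$. By ergodicity of $\mu$ under $A$, this set is either null or conull, and the hypothesis puts us in the latter case. \textbf{Step 2 (restriction formula).} I would then show that $\mu_x^U\big|_{U'}\propto \mu_x^{U'}$ almost surely. Given a $U$-subordinate $\sigma$-algebra $\mathcal{A}$ on some $Y$ with shape $V_y$, refine it to a $U'$-subordinate $\sigma$-algebra $\mathcal{A}'$ whose atom at $y$ is $(V_y\cap U').y\subseteq V_y.y$. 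The double-conditioning formula for conditional measures yields $\mu_y^{\mathcal{A}'}\propto \mu_y^{\mathcal{A}}\big|_{[y]_{\mathcal{A}'}}$, and comparing via the characterizing property applied to both $U$ and $U'$ gives $\mu_y^{U'}\big|_{V_y\cap U'}\propto \mu_y^U\big|_{V_y\cap U'}$. Exhausting $U$ by an increasing sequence of shapes $V_y$ promotes this to the desired proportionality on all of $U'$.

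\textbf{Step 3 (support in $U'$).} The main obstacle is to upgrade the restriction formula to the statement $\supp(\mu_x^U)\subseteq U'$. I would argue by contradiction: suppose that on a positive-measure set (hence, by Step~1, on a conull set) we have $\mu_x^U(U\setminus U')>0$. Pick $v\in \supp(\mu_x^U)\setminus U'$; the shifting formula \eqref{eq:shifty} gives $\supp(\mu_{v.x}^U)=\supp(\mu_x^U)\cdot v^{-1}$, so $\mu_{v.x}^U$ charges both $U'$ (by the almost-everywhere hypothesis applied to $v.x$) and the distinct coset $U'v^{-1}$. The plan is to show that the family of charged cosets, together with the group structure inherited from $U$, generates a closed $A$-normalized subgroup $H_x\leq U$ strictly containing $U'$; by the $A$-equivariance of leafwise measures and ergodicity, $H_x$ is almost everywhere constant, equal to some $H\supsetneq U'$. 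Re-running Step~2 with $H$ in place of $U'$ then yields $\mu_x^U\big|_H\propto \mu_x^H$, and the stronger invariance properties of $\mu$ under $H$ (via Proposition~\ref{prop:LFMeasures}\eqref{item:Invariance} applied through coordinate subgroups) produce an entropy contribution that exceeds what is allowed by the product structure \eqref{eq:ProdStructure} and Theorem~\ref{thm:EntropyDetEW} for $U$, a contradiction. The delicate point—and what I expect to be the real difficulty—is verifying that the set of charged cosets really does assemble into a closed connected subgroup, for which connectedness of $U'$ and the nilpotent structure of the coarse Lyapunov group $U$ must be combined with a measurability/ergodicity argument to rule out pathological accumulations of cosets.
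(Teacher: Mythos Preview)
Your Steps 1 and 2 are correct and match standard facts about leafwise measures; the restriction formula in Step~2 is indeed the right tool. The trouble is entirely in Step~3, and there the argument has genuine gaps.

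Two specific issues. First, from $\mu_{v.x}^U$ charging both $U'$ and $U'v^{-1}$ you cannot simply iterate to build a subgroup: each step requires the new basepoint to lie in your conull set, and even granting a ``leafwise almost everywhere'' lemma, controlling countably many such steps simultaneously is not addressed. Second, and more seriously, even if you produce an $A$-normalized $H\supsetneq U'$ with $\mu_x^U(H)>0$ a.e., this does \emph{not} give invariance of $\mu$ under $H$. Proposition~\ref{prop:LFMeasures}\eqref{item:Invariance} says invariance is equivalent to $\mu_x^H$ being Haar, which is far stronger than $\mu_x^U$ merely charging $H$. So the promised ``entropy overcount'' never materializes---you have only replaced the original problem for $U'$ by the same problem for a larger $H$.

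The paper takes a different and cleaner route, avoiding coset-chasing entirely: it passes through a third equivalent condition, namely $h_\mu(a,U)=h_\mu(a,U')$ for every $a\in A$. Your Step~2 is exactly what is needed for one implication: once $\mu_x^{U'}\propto\mu_x^U|_{U'}$, the volume-decay definitions \eqref{eq:VolDef}--\eqref{eq:EntContDef} let you compare the two contributions directly, and positivity of $\mu_x^U(U')$ ensures the normalization constants do not affect the exponential rate. The remaining implication, that equal entropy contributions force $\mu_x^U=\mu_x^{U'}$, is then handled by a subordinate-partition argument in the spirit of the constructions behind Lemma~\ref{lem:Inequality}; see \textcite[Lemma~5.2]{EL_Joinings2019}.
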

\begin{proof}
See \textcite[Lemma~5.2]{EL_Joinings2019}. The proof goes via a third equivalence, namely, that the entropy contributions are the same, $h_\mu(a,U)=h_\mu(a,U')$ for every $a\in A$.
\end{proof}
This lemma with $U=\UAlpha$ and $U'$ denoting one of the subgroups corresponding to the axes, implies, that for the typical points $x\in X'$, where $X'$ is the conull set from Corollary \ref{cor:SupportProjectsOnto}, we have that $\LWM{x}{\alpha}$ gives measure zero to both axes. Otherwise, the support of $\LWM{x}{\alpha}$ will be contained in one of the axes, and won't project surjectively on both factors. 

Now, fixing $S=1$ and taking $\rho$ small enough, it follows that $\rho$-thickening of the axes inside $B_1(\UAlpha)$ gets arbitrarily small $\LWM{x}{\alpha}$-mass. In other words, given $\epsilon>0$ there exists a set $\tilde X\subset X'$ with $\mu(\tilde X)>1-\epsilon$, and a $\rho>0$ such that 
$$
\LWM{x}{\alpha}(F(\rho,1))>\frac{1}{2}\LWM{x}{\alpha}(B_1(\UAlpha))
$$ 
for all $x\in \tilde X$. 

Now, in the setting of \ExampleOne, we can use the usual maximal ergodic theorem with the element $a:=\Phi_1(1,0)$ that preserves $\mu$, to conclude the following: there exists a subset $Y\subset X$ with $\mu(Y)>1-\epsilon^{1/2}$ such that for every $x\in Y$ and every $R>0$, most ($1-\epsilon^{\frac{1}{2}}$ of the times) $r\in [0,R]$ have $a^r.x\in \tilde X$. Together with \eqref{eq:equivariancy}, this implies that for $x\in Y$ and most $r\in [0,R]$ we have
\begin{equation}\label{eq:manyScales}
\LWM{x}{\alpha}(F(\rho,e^r))>\frac{1}{2}\LWM{ x}{\alpha}(B_{e^r}(\UAlpha)).    
\end{equation}
In other words, we can find a large set of $x\in X$ and  many timescales for which we know that $\LWM{x}{\alpha}$ is not concentrated near the axes. This is a self-similarity property for $\LWM{x}{\alpha}$; one can expect such properties for measures invariant under a diagonalizable element with positive entropy. 

\begin{rema}
A similar statement holds for \ExampleTwo. The only difference is that we need to replace the $B_{e^r}(\UAlpha)$, which is defined as “the square in $\UAlpha$  with lengths $e^r$”  with a “rectangle in $\UAlpha$ with lengths $e^r$ and $e^{2r}$” (and change similarly all the other sets appearing above). 
\end{rema}

Having \eqref{eq:manyScales} is unfortunately, not enough for running the $H$-principle argument we are after. We still cannot guarantee that the timescales given to us by Lemma~\ref{lemm:HPrinciple} are not exactly the problematic scales we don't cover in \eqref{eq:manyScales}. Lindenstrauss had another rabbit in the hat: he noticed that tweaking the initial pair $x_n$ and $y_n$ from Lemma~\ref{lem:InputHMAchine} by $a^t:=\pa{\Phi_1(1,0)}^t$ for $t\in [0,T_n]$ still preserves the equality of the corresponding leafwise measures. Note that $T_n=T_n(\delta_n)$ are needed to be chosen carefully so the displacement between $a^t.x_n$ and $a^t.y_n$ won't grow too much. Having this extra freedom in choosing the initial pairs might give more possible timescales, for which we might be able to guarantee regularity properties as in \eqref{eq:manyScales}. An intricate calculation shows that this is indeed the case, enabling Lindenstruass to run the $H$-principle against all odds ---see \textcite[\S 7.2]{LindenstraussQUE}. Essentially the same calculation (but done in the first and the third component simultaneously),  enables us to run it also in our setting. We avoid discussing it here and finish our outline of the proof of Proposition \ref{prop:mainStep} under the simplifying assumption that a regularity property as in $\eqref{eq:manyScales}$ holds for any scale.

\subsubsection{Proof of Proposition \ref{prop:mainStep} under a simplifying self-similarity assumption}
For concreteness consider $\lambda=\alpha$ and to simplify notation denote $B_S=B_S\pa{\UAlpha}$.
We assume that there is a $\rho\in (0,1)$  such that for $\mu$-almost every $x$ we have
\begin{equation}\label{eq:EveryScale}\forall S>1\quad
\LWM{x}{\alpha}(F(\rho,S))>\frac{1}{2}\LWM{ x}{\alpha}(B_S),    
\end{equation}
 and outline a proof of Proposition \ref{prop:mainStep}. The reader may compare this outline to \textcite[\S 7.1]{LindenstraussQUE}.  Assume, for contradiction, that Proposition \ref{prop:mainStep} does not hold. Then, \eqref{eq:proporInsteadofInv} does not hold with $\lambda=\alpha$ on a subset of positive measure which is $A$-invariant by \eqref{eq:AActionOnInv}. Ergodicity then implies that for a typical point $x\in X$, \eqref{eq:proporInsteadofInv} does not hold with $\lambda=\alpha$. Now, let $\epsilon>0$ and choose a \emph{compact} subset $X_1\subset X$ with $\mu(X_1)>1-\epsilon$ and with:
\begin{enumerate}
    \item $\LWM{x}{\alpha}$ is defined for $x\in X_1$, and the map $x\mapsto \LWM{x}{\alpha}$ is continuous on $X_1$ ,
    \item \eqref{eq:proporInsteadofInv} does not hold for $x\in X_1$ (with $\lambda=\alpha$),
\end{enumerate}
By \S \ref{subsec:MaxErgTheo} there exists a compact set $X_2$ with $\mu(X_2)>1-\epsilon^{\frac{1}{2}}$ such that any $x\in X_2$ satisfies   \eqref{eq:MaxErgForLWM} with $K=X_1$.

Now, use Lemma~\ref{lem:InputHMAchine} with $X'=X_2$ to find pairs of nearby points as stated there; let $x$ and $y$ be such a pair with distance $\delta=\delta(\epsilon)>0$ 
 small enough so that the corresponding timescale $S$ from Lemma~\ref{lemm:HPrinciple} will be large enough, so that \eqref{eq:EveryScale} holds.  Let
$G_x=\set{u\in \UAlpha:u.x\in X_1}$ and $G_y=\set{u\in \UAlpha:u.y\in X_1}$. From \eqref{eq:MaxErgForLWM} we know that $$\LWM{x}{\alpha}\pa{G_x\cap B_S}\geq \pa{1-\epsilon^{\frac{1}{2}}  }  \LWM{x}{\alpha}\pa{B_S}
$$
and similarly for $y$. So, 
$$\LWM{x}{\alpha}\pa{G_x\cap G_y\cap  B_S}\geq \LWM{x}{\alpha}\pa{B_S}- \LWM{x}{\alpha}\pa{  B_S\setminus G_x}- \LWM{x}{\alpha}\pa{  B_S\setminus G_x}\geq   (1-2\epsilon^{\frac{1}{2}})\LWM{x}{\alpha}\pa{B_S}.
$$
In particular for $\epsilon$ small enough, we can find $u\in B_S$ enabling us to use the $H$-principle to find $x'=u.x,y'=u.y\in X_1$ satisfying 
\begin{itemize}
    \item $\LWM{x'}{\alpha}=\LWM{y'}{\alpha}$
    \item $y'=(g'u').x'$ with $u'\in \UAlpha$ having at least one of its coordinates, $u'(s_1)$ or $u'(s_2)$, in $[\frac{1}{C},C]$,  and with $g'\in G$ of size at most $\delta^{\frac{1}{2}}$.
\end{itemize}
Applying this to all the pairs of nearby points coming from Lemma~\ref{lem:InputHMAchine}, we get pairs $x_n',y_n'\in X_1$ as above with $\delta_n\to 0$. We choose a subsequence where the resulting $u_n'$ as above all belongs to a compact set in at least one of their coordinates, say their first one. That is, we assume that $u_n'$ satisfy $u_n'(s_1)\in [\frac{1}{C},C]$ along this subsequence. As $X_1$ is compact, and $x\mapsto \LWM{x}{\alpha}$ is continuous on $X_1$,  we explained in \S \ref{subsec:OptimisticIdea} that limit points of $\set{x_n}$ and of $\set{y_n}$ will give us points $x,y=u.x\in X_1$ with $u\neq e$ with equal leafwise measure. That is, we will find  $x\in X_1$ satisfying  \eqref{eq:proporInsteadofInv}, in contradiction to the definition of $X_1$.

\section{Applications of the joinings Theorem }\label{sec:Applicaitions}
In \S \ref{sec:TorusOrbitsArith} we survey many arithmetic problems and their relation to adelic/$p$-adic torus orbits; In \S \ref{subsec:ArithGeoJoinings} we couple these problems to get problems related to the  classification of joinings. We then explain what  Theorem \ref{thm:mainThm} can say about these coupled problems. Finally, in 
\S \ref{subsec:HighRankAppli} we give a new arithmetic application of Theorem \ref{thm:mainThm} for simple groups of high rank.

We recall that when $(X,m_{X})$ is a probability measure space we say that a sequence of finite set $A_i\subset X$ \emph{equidistribute}, when the normalized counting measure on $A_i$ converges in the weak-* topology to $m_{X}$.
\subsection{Torus orbits and arithmetic}\label{sec:TorusOrbitsArith}

\subsubsection{Three classical arithmetic distribution problems}\label{subsubsec:3ClasicalProblems} Let us first revisit the arithmetic distribution problem we considered in the introduction (see \eqref{eq:EquiOnSpheres} for the notation): the distribution of $\frac{1}{\sqrt D}\bS^2(D)$ inside $\bS^2$ for large $D>0$ with $D\neq 0,4,7\mod 8$. Note that the set $\bS^2(D)$ can be also considered as the set of integer points on the level set $Q_1=D$ for the quadratic form $Q_1(x,y,z)=x^2+y^2+z^2$. 

The second distribution problem we consider is very similar. For $D<0$, one can consider the set of primitive binary quadratic forms with discriminant $D$
\begin{equation}
    \Bin_D=\set{q_{\pa{a,b,c}}(x,y)=ax^2+bxy+cy^2:b^2-4ac=D, \,\gcd(a,b,c)=1}.
\end{equation}
By identifying $\Bin_D$ with the set of the corresponding coefficient vectors $\pa{a,b,c}$, one can realize $\Bin_D$ as the set of integer points on the level set for the quadratic form $Q_2(X,Y,Z)=Y^2-4XZ$: 
\begin{equation}
    V_D:=\set{(X,Y,Z)\in \bR^3:Y^2-4XZ=D}
\end{equation}
and ask about the distribution of $\frac{1}{\sqrt{\av{D}}} \Bin_D\subset V_{-1}$ when $D\to -\infty$ (along negative  \emph{discriminants}, which are by definition the set on  $D$'s with $\Bin_D\neq \emptyset$). Note that the natural “cone” measure on  $V_{-1}\subset \bR^3$ is infinite. This is not really an issue; one just measures the distribution with respect to all possible  quotients of two test functions. But for other reasons, one can reformulate this distribution problem in an equivalent “modular” way, which yields a distribution problem in a finite-measure space. To this end, we recall that \textcite{Gauss} defined   an action of $\GL_2(\bZ)$ on $\Bin_D$ by 
$$
g.q(x,y):=\frac{1}{\det(g)} q\pa{(x,y)g},\quad q(x,y)\in \Bin_D,\,g\in \GL_2(\bZ)
$$
and showed that this action has a finite number of orbits (forming an abelian group, see below). For $q(x,y)\in \Bin_D$ let $z_q$ denote the unique root of $q(z,1)$ belonging to the upper half plane $\bH=\set{z\in \bC:\Im(x)\geq 0}$. The set of such roots are called \emph{complex multiplication points}\footnote{Equivalently, a point $z\in \bH$ is called a \emph{complex multiplication point of discriminant $D$} if the automorphisms group of the corresponding lattice $\mathrm{span}_{\bZ}(1,z)\subset \bC$ is the quadratic order of discriminant $D$.} or \emph{Heegner points} of discriminant $D$. Furthermore, let $[z_q]$ denote the orbit of $z_q$ under the action of $\SL_2(\bZ)$ by Möbius transformations on the $\bH$. One can then verify that the $\GL_2(\bZ)$-orbit of $q\in \Bin_D$ corresponds to the $\SL_2(\bZ)$-orbit $[z_q]$. So the set $\cH_D:=\set{[z_q]:q\in \Bin_D}\subset \SL_2(\bZ)\backslash\bH$ is finite. A dual reformulation of the above distribution problem on $V_{-1}$ is the distribution of the normalized counting measures of the finite sets $\cH_D$ on  $\SL_2(\bZ)\backslash\bH$ equipped with the finite, normalized uniform measure $\frac3 \pi{\displaystyle (ds)^{2}={\frac 3 \pi\frac {(dx)^{2}+(dy)^{2}}{y^{2}}}}$ when $D\to-\infty$. Figures \ref{Fig:Wieser1}-\ref{Fig:Wieser3} were made by Andreas Wieser to show the distribution of $\cH_D$ with growing $\av{D}$'s.
 
 \begin{figure}
   \centering
       \begin{minipage}{0.3\textwidth}
       \centering
\includegraphics[scale=0.5]{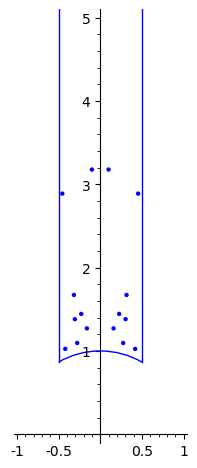} 
       \caption{$D = -1009$}
       \label{Fig:Wieser1}
   \end{minipage}\hfill
   \begin{minipage}{0.3\textwidth}
       \centering
\includegraphics[scale=0.5]{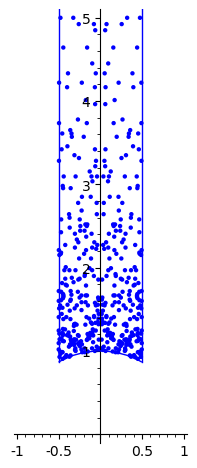}
       \caption{$D = -105509$}
   \end{minipage}\hfill
   \begin{minipage}{0.35\textwidth}
       \centering
\includegraphics[scale=0.5]{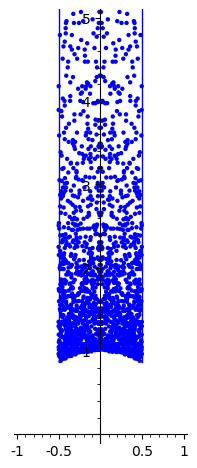}
       \caption{$D = -1299821$}
       \label{Fig:Wieser3}
   \end{minipage}
\end{figure}


We shortly mention a third, related distribution problem. In the last distribution problem, one can also consider positive discriminants $D>0$ and the corresponding distribution of $\frac{1}{\sqrt{D}}\Bin_D\subset V_1$ as $D\to \infty$ along positive discriminants. This problem also admits a modular formulation as above: one sees that $\GL_2(\bZ)$-orbits in $\Bin_D$ correspond to a finite set of closed geodesics on the modular surface $\SL_2(\bZ)\backslash\bH$. See the work of \textcite{ELMVDuke} for more details and nice images of this distribution for some large $D$'s.
As we partially explain in \ref{subsec:ClassGroups}, roughly speaking, the sets $\bS^2(D)$ for $D>0$, and the sets   $\cH_D$ or $\Bin_D$ for $D<0$ with  $D=3\mod 4$, and  the sets $\cH_{4D}$ or $\Bin_{4D}$ for $D<0$ with  $D=1,2\mod 4$,  are all torsors\footnote{That is, they are acted upon freely and transitively by $\ClassGp{D}$} for the class group $\ClassGp{D}$ of the quadratic order of discriminant $D$, and therefore have size $h_D=\av{\ClassGp{D}}$. For $D>0$, the finite set of closed geodesics corresponding to the $\GL_2(\bZ)$-orbits in $\Bin_D$ has size $h_D$.  Gauss conjectured by Gauss that for $D>0$,  $h_D$ is infinitely often equal to 1.  Figure \ref{Fig:Kontrovich} shows images made by Alex Kontorovich showing the distribution of these geodesics for two comparable discriminants, one with a trivial class group and one with a class group of order $4$ (The notation $[a,b,c]$ stands for the binary form $ax^2+bxy+cy^2$).

\begin{figure}[h]
\centering
\includegraphics[width=0.9\textwidth]{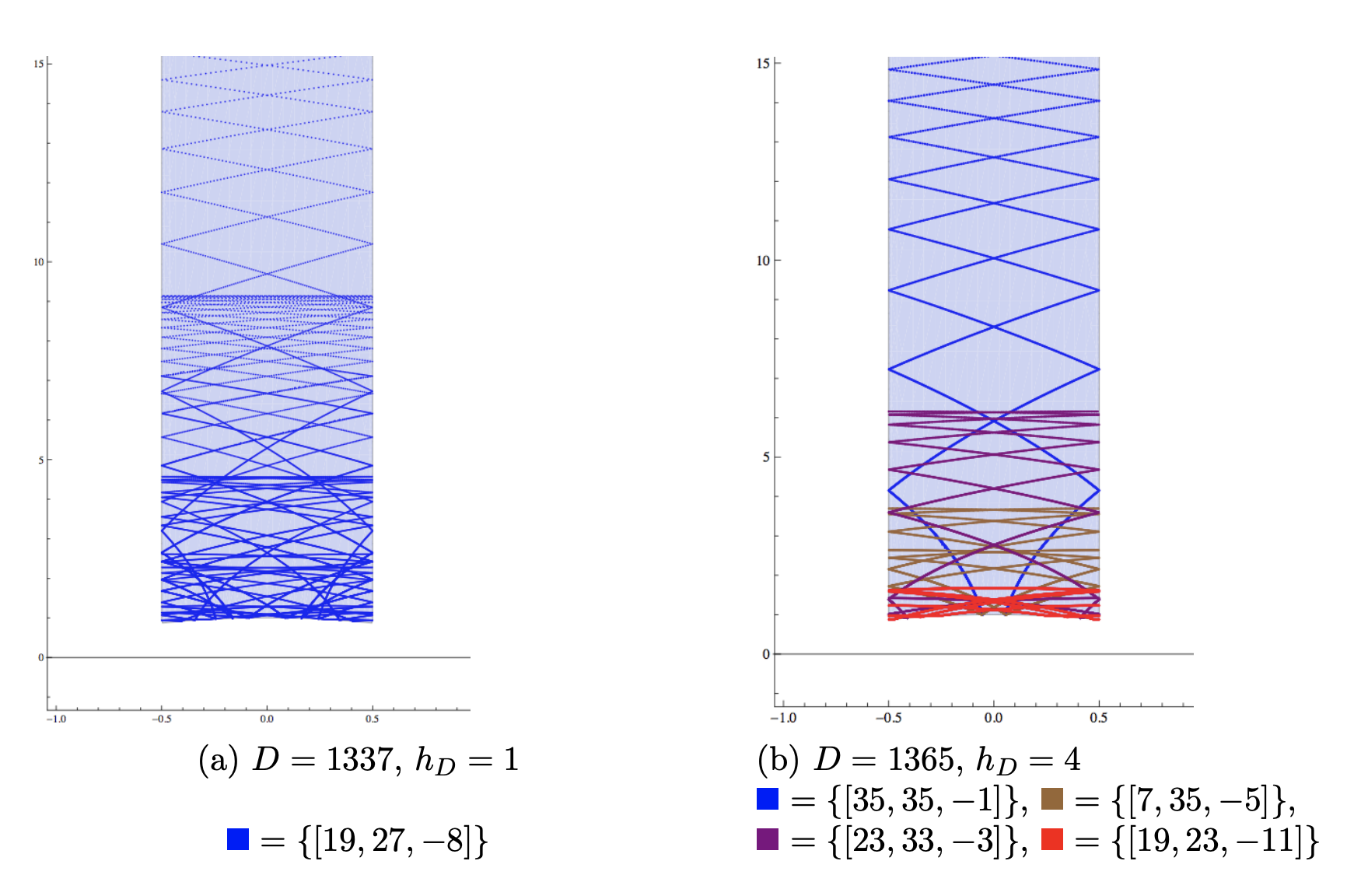}
\caption{Distribution of closed geodesics (by Alex Kontorovich)}
\label{Fig:Kontrovich}
\end{figure}

We avoid giving historical details for these problems. We just mention that, as the images might suggest, in  these three problems the sets $\bS^2(D)$ (resp.~$\Bin_D$ or equivalently~$\cH_D$) equidistribute with the respect to the natural measures on $\bS^2$ (resp.~$V_{-1}$,$V_{1}$ or equivalently $\SL_2(\bZ)\backslash\bH$). Under an \emph{auxiliary} congruence condition on~$D$  these results were proven by Linnik and Skubenko in the late 50s (see \cite{LinnikBook}). \textcite{Duke88} proved it in full generality, building on a work of \textcite{IwaniecBreak}. We refer below to these equidistribution results (and some other variants of them mentioned below) collectively as \emph{Duke's Theorem}. 

There are several surveys of these results which might interest the reader: for a modern approach to the methods of Linnik and his school, we refer to the work of \textcite{ELMVDuke}, \textcite{EMVpoints} and to the work of  \textcite{WieserLinnik}. Some methods used in these works are partially related to the methods used in the proof of Theorem \ref{thm:mainThm}. For a broader overview, also on the works of Duke and Iwaniec mentioned above, the reader can consult the survey by \textcite{DukeSurvey2007} and by \textcite{MichelVenkateshICM}.  Finally, these results (and most of the applications mentioned in this survey) are also considered in the recent survey by \textcite{lindenstrauss2021Survey}.

\subsubsection{Relation to class groups and adelic torus orbits}\label{subsec:ClassGroups} It is not a priori clear how  the above distribution questions relate to questions about the distribution of torus orbits in ($S$-arithmetic/ adelic) homogeneous spaces. Let us concentrate on the first problem above, the distribution of $\frac{1}{\sqrt D}\bS^2(D)\subset \bS^2$, $D>0$. We claim that the distribution properties of $\bS^2(D)$ can be studied through the distribution properties of an orbit of the group $\bH_v(\bA)$ in the quotient $\SO_{3}(\bQ)\backslash\SO_{3}(\bA)$. Here, $v$ denotes a vector in $\bS^2(D)$, $\bH_v$ denotes the stabilizer group of $v$ with respect to the standard action of $\SO_3$ on the three-dimensional space, and $\bA$ denotes the ring of adèles of $\bQ$. Note that, in this case, the group $\bH_v(\bA)$ is the adelic points of a rank one, $\bQ$-anisotropic algebraic torus. We remark that $\SO_3=\SO_{Q_1}$ where $Q_1$ was defined in \S \ref{subsubsec:3ClasicalProblems}. Everything we will say below can be done analogously with $Q_2$ instead of $Q_1$, when one considers $\cH_D$ instead of $\bS^2(D)$.

To convince the reader that this relation is plausible, and relate it also to the class group $\ClassGp{-D}$, let us define a map that seems contradictory:  we fix a vector $v\in \bS^2(D)$ and we wish to define an (essentially bijective) map from a set of double-cosets related to $\bH_v$ (see \eqref{eq:classGroupAdele}) to the set $\bS^2(D)$, by using elements of $\bH_v(\bA_f)$ to move $v$ \emph{non-trivially} to other vectors in $\bS^2(D)$. Here, $\bA_f$ denotes the finite adèles. To define this map, we first note that $\SO_3$ has class number one, that is,
\begin{equation}\label{eq:classNumOne}
    \SO_3(\bA_f)=\SO_3(\bQ)\SO_3( \widehat{\bZ})
\end{equation}
where $\SO_3(\bQ)$ is embedded diagonally in $\SO_3(\bA_f)$ and $\widehat{\bZ}$ denotes the (compact) ring $\prod_{p\text{ prime}}\bZ_p$ (For a proof of \eqref{eq:classNumOne}, one may consult \cite[\S 5.1]{EMVpoints}). Note that this fact is true for this specific ternary form $Q_1$, although general forms can be treated with the same techniques too. One can therefore write any element of $\SO_3(\bA_f)$, and in particular any $h=\pa{h_p}_p\in \bH_v(\bA_f)$ as
\begin{equation}\label{eq:ClassNumber1}
    h=\gamma^{-1}k,\,\text{ with } \gamma\in \SO_3(\bQ),\,k=(k_p)_p\in \SO_3(\widehat{\bZ}). 
\end{equation}
Using this decomposition we can take  $h\in \bH_v(\bA_f)$ and use it to generate a vector in $\bS^2(D)$: 
for $h=\pa{h_p}_p\in \bH_v(\bA_f)$ let $\gamma$ and $k=(k_p)_{p}$ be as in equation \eqref{eq:ClassNumber1} and define 
\begin{equation}
    h*v:=\gamma.v\in \bS^2(D).
\end{equation}
To show that the vector $h*v$ is indeed in $\bS^2(D)$ and that this process is well-defined, first note that for any prime $p$ we have (as $\gamma$ is diagonally-embedded) 
\begin{equation}
    \bQ^3\ni\gamma.v=\gamma.h_p.v\stackrel{\eqref{eq:ClassNumber1}}{=}(\gamma\gamma^{-1}k_p).v=k_p.v\in \bZ^3_p.
\end{equation}
As $\bQ\cap\widehat{\bZ}=\bZ$ we have that $\gamma.v\in \bZ^3$ and since we act by the orthogonal group, it has the same length and is therefore indeed an element of $\bS^2(D)$. We also have  $\gamma.v=k_p.v$ for every prime $p$. However, $h*v$ does depend slightly on the choices made in \eqref{eq:ClassNumber1}: choosing a different decomposition in \eqref{eq:ClassNumber1} will yield the same vector up-to $\SO_3(\bZ)$, so this process gives a well-defined map to $\SO_3(\bZ)\backslash\bS^2(D)$. The finite group $\SO_3(\bZ)$ leaves all the measures we are interested in invariant, so this small detail does not matter too much for us (and we will keep saying, imprecisely, that we got a map to $\bS^2(D)$). A priori, there is no reason to believe that we get new vectors from this process, that is, that the image of this map is interesting. Purely algebraically (see  \cite[Theorem 8.2]{PR94}) one can show that the set of new vectors we generate from $v$ out of $\bH(\bA_f)$ via the above process, is the so-called \emph{genus} of $v$: 

\begin{equation}\label{eq:genusOfv}
    \gen(v)=\SO_3(\bZ)\backslash\set{w\in \bQ^3:\forall p\,\,w\sim_{\bZ_p}v,\,w\sim_{\bQ}v}
\end{equation}
where with $w\sim_{R}v$ for a ring $R$ we mean that there exists $g\in \SO_3(R)$ with $g.w=v$. Moreover, it follows from purely algebraic considerations, that this process defines a bijection between $\mathrm{gen}(v)$ and the following double quotient:
\begin{equation}\label{eq:classGroupAdele}
    \bH_v(\bQ)\backslash\bH_v(\bA_f)/\bH_v(\widehat{\bZ}).
\end{equation}
Note that this double quotient  is a (abelian) group since $\bH_v$ is abelian.
Assume for simplicity that $D$ is fundamental. As an algebraic torus, $\bH_v$ is isomorphic to the rank one $\bQ$-anisotropic torus $\mathrm{Res}_{K/\bQ}\bG_m/\bG_m$ for $K=\bQ(\sqrt{-D}$). This identifies (with small bounded index depending on this isomorphism of the algebraic tori) the double quotient in \eqref{eq:classGroupAdele} with the similar double quotient for $\mathrm{Res}_{K/\bQ}\bG_m/\bG_m$ (see \S~\ref{subsec:GenrQA} for more details). The latter is by definition the idèle class group $K^*\backslash \bA_{K,f}^\times/\widehat{\cO_{-D}}^\times$, which is isomorphic to the class group $\ClassGp{-D}$. In this specific case, one can show that $\mathrm{gen}(v)$ is equal to $\SO_3(\bZ)\backslash\bS^2(D)$ (see for example \cite[\S 6]{EMVpoints}). In summary, the above describes a (essentially bijective) map from the class group $\ClassGp{-D}$ onto $\bS^2(D)$ as promised (ignoring again the division by $\SO_3(\bZ)$ and the above-mentioned small bounded index issues). This map depends of course on the choice of the “base point” $v$. In \textcite[Proposition 3.5]{EMVpoints} they explain how to construct from these base-dependent maps a base-free way to give $\bS^2(D)$ a torsor structure for $\ClassGp{-D}$, that is, how to construct a free and transitive action of $\ClassGp{-D}$ on $\bS^2(D)$.

We remark that the above algebraic construction does not tell us \emph{anything} about how $\bS^2(D)$ distributes, and in particular does not give us any reason to believe that $\frac{1}{\sqrt D} \bS^2(D)$ equidistributes. In section \ref{subsec:WhyItWorks} we will give a dynamical interpretation of this construction, from which the above construction arises, and through which one can see why $\frac{1}{\sqrt D} \bS^2(D)$ should equidistribute.

\subsubsection{A hidden distribution problem}\label{subsec:CMSS}
We shortly describe another arithmetic problem because it relates in a slightly different way to the distribution of  adelic torus orbits and because it gives rise naturally to a problem about joinings. We avoid giving many details, skip some definitions, and instead refer the interested reader to the introduction of \textcite{aka2020simultaneous}.

Let $\cO_D$ be the quadratic imaginary order with discriminant $D<0$. We say that an elliptic curve $E$ over $\bC$ has complex multiplication (CM) by $\cO_D$ if its endomorphism ring is isomorphic to $\cO_D$. We let $\CM_D$ denote the set of isomorphism classes of elliptic curves having complex multiplication by $\cO_D$ (Roughly speaking, one may realize this set as the isomorphism classes of  $\set{\bC/\Lambda_{z}:[z]\in \cH_D}$ where $\Lambda_{z}$ denotes the lattice $\mathrm{span}_\bZ(1,z)\subset \bC$). This set is finite, of order $h_D\asymp \av{D}^{\frac12+\epsilon}$ and consists  of curves which are defined over a finite algebraic extension of $\bQ$.

Fix an odd prime $p$ and consider now only negative $D$'s for which $p$ is inert in~$\cO_D$. Fixing an embedding of $\overline{\bQ}$ in $\overline{\bQ_p}$, one can  naturally define the reduction map $\red_p\colon \CM_D\to \cS_p$ where $\cS_p$ is the set of supersingular elliptic  curves over $\overline{\bF_p}$. The set $\cS_p$ is finite, of order $\asymp\frac{p-1}{12}$. \textcite{DeuringSurj} showed  that any element of $\cS_p$ is the reduction of an element of $\CM_D$ for some negative $D$. It raised the question if there is a discriminant~$D$ (with $p$ being inert in~$\cO_D$) for which this reduction map is surjective. Proving a variant of the works of Duke and Iwaniec mentioned above, and based on a work of  \textcite{Gross87}, \textcite{Michel04} proved  that surjectivity holds for large enough $\av{D}$. In fact, he explained that this question is a slight variant of the problems considered above: the distribution of the adelic points of a rank one, $\bQ$-anisotropic  tori in a homogeneous spaces of the form $\SO_Q(\bQ)\backslash\SO_Q(\bA)$ for some specific quadratic form~$Q$ (related only to~$p$). In \S \ref{subsec:GenrQA} we shortly review a generalized setting,  in which one can consider all the above problems in a uniform way. Before we do this, let us just hint how this surjectivity problem could lead to problems involving joinings: fix  two odd primes $p,q$ and consider now only negative $D$'s with both $p$ and $q$ being inert in $\cO_D$. Is the map $(\red_p,\red_q)$ surjective for $\av{D}$  large enough? If so, can  this be generalized to any finite number of primes?



\subsubsection{A common generalization}\label{subsec:GenrQA}
All the above problems fit in the following setting. Let $B$ be a quaternion algebra over $\bQ$. Recall that quaternion algebras over $\bQ$ are classified by a finite, even-sized set of places $S$, where they ramify, that is, where $B\otimes\bQ_p$ is  a division-algebra. Let $\Nr$ be the associated reduced norm and $Q:=\Nr|_{B^0}$ the quadratic form induced by the restriction of $\Nr$ to the traceless quaternions $B^0$. Recall that 
\begin{align}
\begin{split}
  B^\times\stackrel{\pi}{\to}PB^\times:=B^\times/Z(B^\times)\cong \,&\,\,\SO_Q \\
 g\mapsto \,&\,\, \pa{x\in B^0\mapsto gxg^{-1}\in B^0 }
\end{split}
\end{align}
where $Z(B^\times)$ denotes the center of $B^\times$. For $v\in B^0$, the preimage of the stabilizer group $\bH_v<\SO_Q$ under this map is the group of the invertible elements of the ring $$\bQ[v]\cong\bQ[X]/\pa{X^2+\Nr(v)}=:K,$$ 
since for $v\in B^0$ we have $v^2=-v\bar v=-\Nr(v)\in \bQ$.
Let $\bG$ denote the algebraic group $PB^\times$ and $\bT_v=\Res_{K/\bQ}\bG_m/\bG_m<\bG$ denote the image of $\bQ[v]$ under $\pi$ in $\bG$. Note that $\bT_v$ is a rank one $\bQ$-anisotropic algebraic torus.

All the above arithmetic problems, and many others, can be recast naturally, as follows. We consider the adelic homogeneous space $\bG(\bQ)\backslash\bG(\bA)$, and for a \emph{primitive} vector $v\in B^0(\bZ)$ and call an orbit of the form 
\begin{equation}\label{eq:toralPacket}
\bG(\bQ)\bT_v(\bA)\pa{g_\infty,g_f},\quad \pa{g_\infty,g_f}\in \bG(\bR)\times \bG(\bA_f)  
\end{equation}
a \emph{toral packet} related to $v$. We equip  a toral packet with the pushforward of the natural Haar probability measure on the quotient $\bT_v(\bQ)\backslash\bT_v(\bA)$ under the map $t\mapsto t\pa{g_\infty,g_f}$. One can attach to such an orbit a \emph{discriminant} which is, roughly said, related to $-\Nr(v)$. More precisely, it can be defined as the product of local discriminants; for a non-Archimedean place, say at $p$, the local discriminant is the discriminant of the of order $\bQ_p[v]\cap g_f(B^0(\bZ_p))g_f^{-1}$, and for the Archimedean place, it is a  measure of the distortion of the torus $g_\infty^{-1}\bT(\bR)g_\infty$. 

For example, for $B=B_{\infty,2}$, which is  Hamilton's quaternion algebra, $\Nr|_{B^0}$ is the sum of three squares. So for any $v\in  B^0(\bZ)$, we have $\bS^2(D)\subset B^0(\bZ)$ with $D=\norm{v}^2_2$. Recall that in this case $\bG=PB^\times$ is $\SO_3$ and that $\bH_v$ denotes the stabilizer of $v$. We will explain in \S \ref{subsec:WhyItWorks}, how the toral packet $\bG(\bQ)\bH_v(\bA)\pa{k_v,e_f}$ with $k_v\in \SO_3(\bR)$ moving $(0,0,1)^t$ to $\frac{1}{\sqrt D}v$ and $e_f\in \bG(\bA_f)$ denoting the identity element, relates naturally to the set $\gen(v)$ defined in \eqref{eq:genusOfv}. Note that in this case the local discriminant in the real place is constant, as  $k_v^{-1}\bH_v(\bR)k_v$ is the fixed torus $\SO_2(\bR)$ (see \S \ref{subsec:WhyItWorks} for more details).
This explains a bit the above terminology: the orbit \eqref{eq:toralPacket} “codes” a packet of vectors which are in the genus of $v$. 

In this generalized context, we refer to the following statement as Duke's Theorem: toral packets equidistribute, when their discriminants tend to $+\infty $ or $-\infty$, to a $\bG(\bA)^{+}$-invariant probability measure\footnote{The group $\bG(\bA)^{+}$ is the image of the adelic points of the algebraic simply-connected cover of $\bG$ under the canonical projection map (which is the Spin group of $Q$ in our case).} on $\bG(\bQ)\backslash\bG(\bA)$. Note that in many cases, but not in general,  this probability measure could be shown to be  $\bG(\bA)$-invariant, i.e.~the Haar measure on $\bG(\bQ)\backslash\bG(\bA)$.  This formulation of Duke's Theorem is the conglomeration of several results by many authors, achieved before and after the seminal result of  \textcite{Duke88}. We refer the reader to \textcite[Theorem 4.6]{ELMVCubic} for a more precise formulation and for a list of references. Until now, there were three main approaches for proving such a statement: via ergodic theory (essentially Linnik's original method, under a congruence condition assumption), via modular form (as \cite{Duke88}) and via subconvexity estimates. The last two approaches yield an error estimate. The subconvexity approach works naturally in the generality stated above, and can be further generalized to obtain stronger variants, such as the work of \textcite{MichelHarcos06} (see also the discussion in Example \ref{ex:PowerInClassGroup} below).

The problems mentioned above fit in the above setup as follows:
\begin{itemize}
    \item The distribution of $\bS^2(D)$, as already mentioned above,  relates to the choice $B=B_{\infty,2}$ for which the norm of $v\in B^0$ calculates as $Q(v)=\Nr(v)=\Nr(xi+yj+zk)=x^2+y^2+z^2$.
    \item The distribution of $\cH_D$ (resp.~closed geodesics) in $\SL_2(\bZ)\backslash\bH$ relates to the choice $B=B_{\emptyset}=M_{2\times 2}$ for which the norm of $v\in B^0$ calculates as $Q(v)=\Nr(v)=\Nr\bigl( \begin{smallmatrix}x & y\\ z & -x\end{smallmatrix}\bigr)=-\pa{x^2+yz}$, where the scalar matrices are identified with the underlying field. This corresponds, when $Q(v)\ra \infty$  to the distribution of Heegner points, and when $Q(v)\to -\infty$ to the distribution of closed geodesics.
    \item The problem considered in \ref{subsec:CMSS}, that is, the reduction of elliptic curves with CM by $\cO_D$ to $\cS_p$, the set of supersingular elliptic curves over $\overline{\bF_p}$, corresponds to the case $B=B_{\infty, p}$, but in an interesting way: one can show that with this choice the double cosets in 
    \begin{equation}\label{eq:DoubleQuoSS}
        \bG(\bQ)\backslash\bG(\bA)/\bG(\bR\times\widehat{\bZ}),
    \end{equation} 
    or said differently, the clopen (closed and open) orbits of $\bG(\bR\times \widehat{\bZ})$ on $\bG(\bQ)\backslash\bG(\bA)$, are in bijection with $\cS_p$. 
    As $p$ is inert in $\cO_D$ there is an embedding of $\Res_{\bQ(\sqrt{D})/\bQ}\bG_m/\bG_m$ into $\bG$ and the CM curves are related to a toral packet as above. Surjectivity  of $\red_p$ translates then to the question if the toral packet visits all the above clopen set, or equivalently, if the projection of toral packet to \eqref{eq:DoubleQuoSS} is surjective. In this respect, this application is closer in nature to the work of \textcite{EV08LocalGlobal}, which is the first instance known to the author where the algebraic ideas we explain here, were coupled with homogeneous dynamics considerations. Duke's theorem implies in this context that $\red_p\colon\CM_D\to \cS_p$ is surjective and actually it gives a strengthening of surjectivity: the asymptotic size of the fibres for $E\in\cS_p$,  $\lim_{D\ra -\infty}\frac{\av{\red_p^{-1}(E)}}{\av{\CM_D}}$ is equal to the $m_{\bG(\bQ)\backslash\bG(\bA)}$-measure of the   $\bG(\bR\times\widehat{\bZ})$-orbit corresponding to $E$. Here $m_{\bG(\bQ)\backslash\bG(\bA)}$ denotes the natural $\bG(\bA)$-invariant probability measure on ${\bG(\bQ)\backslash\bG(\bA)}$. In this sense, one may say that the projection map $\red_p$ equidistributes.
\end{itemize}


\subsubsection{Why and how this works?}\label{subsec:WhyItWorks}
In section \ref{subsec:ClassGroups} we defined a map from 
$\bH_v(\bQ)\backslash\bH_v(\bA_f)/\bH_v(\widehat{\bZ})$ to $\frac{1}{\sqrt D}\bS^2(D)\subset \bS^2$. In this subsection, we would like to reinterpret this map via a projection of a toral packet between two homogeneous spaces. 
Consider the natural projection
\begin{equation}
    \pi\colon\SO_3(\bQ)\backslash \SO_3(\bA)\to \SO_3(\bQ)\backslash \SO_3(\bA)/\SO_3(\widehat{\bZ})
\end{equation}
where as above, $\SO_3(\bQ)$ is diagonally embedded in $\SO_3(\bA)$. We will show below the identification
\begin{equation}\label{eq:identification}
     \SO_3(\bQ)\backslash \SO_3(\bA)/\SO_3(\widehat{\bZ})\cong\SO_3(\bZ)\backslash \SO_3(\bR).
\end{equation}
With this identification assumed,  we may interpret $\pi$ as a covering map 
\begin{equation}\label{eq:CoveringMap}
    \pi\colon\SO_3(\bQ)\backslash \SO_3(\bA)\to \SO_3(\bZ)\backslash \SO_3(\bR)
\end{equation}
 from an adelic homogeneous space  to a real homogeneous space (with compact fibres isomorphic to $\SO_3(\widehat{\bZ})$).
The identification \eqref{eq:identification} follows “tautologically” from \eqref{eq:classNumOne}:
\begin{align}
    \SO_3(\bQ)\backslash \SO_3(\bA)& \stackrel{\eqref{eq:classNumOne}}{=}\SO_3(\bQ)\backslash\pa{\SO_3(\bQ)\SO_3(\bR\times\widehat{\bZ})} \\
    &\cong\pa{ \SO_3(\bQ)\cap \SO_3(\bR\times\widehat{\bZ})} \backslash \SO_3(\bR\times\widehat{\bZ})\\
    &=\SO_3(\bZ) \backslash \SO_3(\bR\times\widehat{\bZ})
\end{align}
where the third equality follows from the fact that $\bZ=\bQ\cap\widehat{\bZ}$ as subsets of $\bA_f$. Dividing now everything from the right by $\SO_3(\widehat{\bZ})$ gives the desired identification. It is nevertheless useful to explicate this identification to start to “see” dynamics: consider the double coset $\SO_3(\bQ)\pa{g_\infty,g_f}\SO_3(\widehat{\bZ})$ with $g_\infty\in \SO_3(\bR),g_f\in \SO_3(\bA_f)$. This coset may be identified with $\SO_3(\bR)g_\infty$ if $g_f$ is “small”, that is, if $g_f$ belongs to the compact group $\SO_3(\widehat{\bZ})$. But if $g_f$ is “large”, that is, if $g_f\notin\SO_3(\widehat{\bZ})$, we first need to “bring it back to $\SO_3(\widehat{\bZ})$” using the discrete group $\SO_3(\bQ)$. This means that, we first need to find $\gamma\in \SO_3(\bQ)$ with $\gamma g_f=:k\in \SO_3(\widehat{\bZ})$ and then, recalling that $\SO_3(\bQ)$ is embedded diagonally, we have 
$$
\SO_3(\bQ)(g_\infty,g_f)\SO_3(\widehat{\bZ})=\SO_3(\bQ)(\gamma g_\infty,\gamma g_f)\SO_3(\widehat{\bZ})=\SO_3(\bQ)(\gamma g_\infty,k)\SO_3(\widehat{\bZ}).
$$
As $k=\gamma g_f$ is “small”, the latter is identified as above with $\SO_3(\bZ)\gamma g_\infty$.

Although everything we said so far is purely algebraic, the adjectives “small”, “large” and the process of “bringing $g_f$ back” stem from a dynamical perspective. Before we can explain this perspective, we just need to understand how the sphere $\bS^2$, or more precisely $\SO_3(\bZ)\backslash\bS^2$, can be described as a homogeneous space. Consider the natural action of $\SO_3(\bR)$ on $\bS^2$ (or equivalently on $\bR^3$) and let $\SO_2(\bR):=\mathrm{Stab}_{\SO_3(\bR)}(e_3)<\SO_3(\bR)$, where $e_3$ is the unit vector $\pa{0,0,1}^t$. This yield the identification $\SO_3(\bR)/\SO_2(\bR)\cong \bS^2$ and therefore also 
\begin{equation}\label{eq:SphereModulo}
\SO_3(\bZ)\backslash\SO_3(\bR)/\SO_2(\bR)\cong \SO_3(\bZ)\backslash\bS^2.
\end{equation} 
Said differently, $\SO_2(\bR)$-orbits in the homogeneous space $\SO_3(\bZ)\backslash\SO_3(\bR)$ correspond to points in $\SO_3(\bZ)\backslash\bS^2$.

After all of these preparations,  we are finally ready to state the upshot of this subsection, which we can then also interpret dynamically. For $v\in \bS^2(D)$ choose $k_v\in \SO_3(\bR)$ with $k_v(e_3)=\frac1{\sqrt{D}}\cdot v$. With this choice, we have $\bH_v(\bR)=k_v\SO_2(\bR)k_v^{-1}$. Tracing through the definitions and identifications above, one is led to the following conclusion:  the projection (via $\pi$ from \eqref{eq:CoveringMap}) of the adelic torus orbit 
\begin{equation}\label{eq:AdelicOrbit}
\SO_3(\bQ)\bH_v(\bA)(k_v,e_f)\subset \SO_3(\bQ)\backslash\SO_3(\bA)    
\end{equation}
 to the real homogeneous space $\SO_3(\bZ)\backslash\SO_3(\bR)$, where $e_f\in SO_3(\bA_f)$ denotes the identity element, is a collection of $\SO_2(\bR)$-orbits which, under \eqref{eq:SphereModulo}, corresponds precisely to $\gen(v)$ viewed as a subset of $\SO_3(\bZ)\backslash \bS^2$. We urge the reader to verify this as a good way to repeat all the algebraic considerations above. In particular, this reproduces the map related to $\bH_v$ discussed in \ref{subsec:ClassGroups}, and explains how it arises naturally.

Up to now we were just chasing definition and identifications, and from time to time, were hinting at a dynamical interpretation. We finally can explain how dynamics can give us  valuable input about the distribution of $\gen(v)$ (which is equal to $\bS^2(D)$ in our specific case). For all we know, $\gen(v)$ can be quite a boring set. What we know from the above, is that elements of $\gen(v)$ are of the form $\gamma.v,\,\gamma\in \SO_3(\bQ)$ for $\gamma$'s that were found in order to “bring large elements in $\SO_3(\bA_f)$ back to $\SO_3(\widehat{\bZ})$”. Let's explain this more precisely, under the assumption that the adelic orbit \eqref{eq:AdelicOrbit} “goes everywhere”.  That is, we assume that it is dense in $\SO_3(\bQ)\backslash\SO_3(\bA)$. Then, in particular, for an arbitrary $g_\infty\in \SO_3(\bR)$ we can find $h\in \bH_v(\bA_f)$ with 
\begin{equation}\label{eq:Approx}
\SO_3(\bQ)(k_v,h)\approx\SO_3(\bQ)(g_\infty,e_f).\end{equation}
Projecting \eqref{eq:Approx} to the real homogeneous space, this approximation found for us a $\gamma\in \SO_3(\bQ)$ with $\gamma h\in \SO_3(\widehat{\bZ})$, such that the vector $\gamma v\in \bZ^3$ satisfy 
\begin{equation}\label{eq:goingEverywhere}
\gamma.v=\gamma k_v(e_3)\approx g_\infty (e_3)=\text{An arbitrary direction in $\bS^2$!}    
\end{equation}
Note that in order to have a chance that the orbit \eqref{eq:AdelicOrbit} “will go everywhere”, $\bH_v(\bA)$ needs to be a large group. In particular, our dynamical considerations are (unfortunately!) usually restricted to working in one particular place, say at a fixed prime $p$. This explains the congruence conditions that we must impose on $D=\norm{v}^2$ in order to use these dynamical considerations: they are there to verify that $\bH_v$ splits at the fixed prime $p$ and consequently to ensure that $\bH_v(\bQ_p)$ contains large elements. 

\begin{rema}
One can consider the above methods in higher dimensions, say, for the quadratic form $\sum_{i=1}^n x_i^2$ with $n\geq 6$. The dynamical methods showing the equidistribution of orbits as in \eqref{eq:AdelicOrbit} with $3$ replaced by $n$ are much simpler (they are easy special cases of \textcite{MozesShah95} and \textcite{GorodnikOh11} since $\bH_v<\SO_n$ is maximal). Based on this, one can work out from our discussion above a proof of the equidistribution of integer points on $\bS^{n-1}$ based on homogeneous dynamics.
\end{rema}

\subsection{Arithmetically motivated joinings problems}\label{subsec:ArithGeoJoinings}

In \S \ref{subsec:ManyJointExamples} we consider several arithmetic problems which are mainly coupling of  individual equidistribution problems of the type that we discussed in \S \ref{sec:TorusOrbitsArith}. 
For all the problems presented below, there is no complete solution.
In \S \ref{subsec:WithoutConjCond} we discuss  partial results have been achieved without using Theorem \ref{thm:mainThm}. Then, in \S \ref{subec:PartSolWithMainThm} we discuss partial results that use Theorem \ref{thm:mainThm}. For some of them, Theorem \ref{thm:mainThm} gives a key input, which allows for a solution under congruence conditions at two distinct prime numbers. We will also discuss in \S \ref{subec:PartSolWithMainThm} why and how these congruence conditions are used in order to make Theorem \ref{thm:mainThm} applicable.

\subsubsection{Several explicit and implicit joint distribution problems}\label{subsec:ManyJointExamples}
The first application of Theorem \ref{thm:mainThm} was already discussed in the introduction:
\begin{exem}\label{ex:1in3}
  With notation as in equation \eqref{JdFor1in3}, one asks about the equidistribution of 
  $$
  J_D=\set{\pa{\tfrac{1}{\sqrt D}v,[\Lambda_v]}: v\in \bS^2(D)}\subset \bS^2\times\pa{\SL_2(\bZ)\backslash\bH}
  $$
  when $D\to \infty$ along $D=0,4,7\mod 8$. Recall from \S \ref{sec:TorusOrbitsArith} that\footnote{to be completely precise, we must consider $\SO_3(\bZ)\backslash \bS^2(D)$ and possibly other finite-index issues, but we ignore them for simplicity}  $\bS^2(D)$ is a torsor of class group  $\Pic(\cO_{-D})$.  It turns out, that the set $\set{[\Lambda_v]: v\in \bS^2(D)}$ is also contained in another torsor of $\Pic(\cO_{-D})$: the set $\set{[\Lambda_v]: v\in \bS^2(D)}$ is a subset of $\cH_{-D}$ when $D=3\mod 4$ and of $\cH_{-4D}$ when $D=1,2\mod 4$, and the containing set in both cases is a torsor of the class group $\Pic(\cO_{-D})$. The novelty of \textcite{AES2016} was to relate (in the spirit of \S \ref{sec:TorusOrbitsArith}) the distribution of $J_D$ to the distribution of a \emph{joined} adelic orbit of a stabilizer group (as in \S \ref{subsec:ClassGroups}) in a product of two homogeneous spaces, which correspond to $\bS^2$ and $\SL_2(\bZ)\backslash\bH$. This stabilizer group is again a $\bQ$-anisotropic algebraic torus of rank one. The distribution of these adelic orbits can be studied via Theorem \ref{thm:mainThm}, once one assumes congruence conditions on $D$. We will give more details in \S \ref{subec:PartSolWithMainThm}.
\end{exem}

Although Example \ref{ex:1in3} was motivated by a geometric construction, examining it closely through the $\Pic(\cO_{-D})$-torsor structure on $\bS^2(D)$ and on $\cH_{-D}$ (or on $\cH_{-4D}$), one can see that $J_D$ corresponds to the set
\begin{equation}\label{eq:ArithStrOrtoLattc}
    \set{\pa{t.v,t^2.[\Lambda_v]}:t\in \Pic(\cO_{-D})}
\end{equation}
where $t^2$ stand for the square of $t$ in the class group $\Pic(\cO_{-D})$. This leads to several interesting and arithmetically motivated, “coupled” problems. Before stating them, we want to stress our point of view in this section: in the first component of \eqref{eq:ArithStrOrtoLattc} we consider an orbit of the class group $\Pic(\cO_{-D})$ and in the second component of \eqref{eq:ArithStrOrtoLattc} we consider an orbit of a subgroup of $\Pic(\cO_{-D})$, namely, the subgroup of squares in $\Pic(\cO_{-D})$. 

Let us first concentrate  on equidistribution in each component, that is, on the interesting problem of \emph{individual} equidistribution of orbits of  subgroups of $\Pic(\cO_{-D})$, or more generally of cosets of such subgroups.  
We recall that by \textcite{Siegel1935}  we know that $\av{\Pic(\cO_{-D})}=D^{\frac{1}{2}+o(1)}$. One can easily find (for example for the torsor $\cH_{-D}$) subgroups of size $D^{o(1)}$ whose orbits do not equidistribute (see also the work of \textcite{McMullenUniformly} for interesting results in this context). \textcite[Conjeture 1.11]{ELMVDukePeriodic}  make a bold conjecture in this context. We phrase it in the above context (e.g., for the $\Pic(\cO_{-D})$-torsors $\cH_{-D}$ or $\bS^2(D)$), but it can also be phrased for  all the problems related to toral packets we considered in \S \ref{subsec:GenrQA}.
\begin{conj}\label{conj:equidisSmallCollections} 
Let $\rho>0$. Individual equidistribution holds for orbits of cosets of subgroups of size $D^{\rho}$.
\end{conj}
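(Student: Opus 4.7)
The plan is to attack this via a combination of three lenses: subconvexity of $L$-functions, the ergodic/homogeneous-dynamics framework underlying Linnik--Duke equidistribution, and the joinings input provided by Theorem~\ref{thm:mainThm} applied to a twist of the orbit against itself. Fixing a coset $t\Sigma$ in $\Pic(\cO_{-D})$ and a torsor such as $\cH_{-D}$ or $\bS^2(D)$, equidistribution of $t\Sigma.v$ is equivalent to cancellation in the Weyl sums
\[
\sum_{\chi\in \widehat{\Pic(\cO_{-D})/\Sigma}}\hat{f}(\chi)\,\overline{\chi(t)},
\]
where $\hat{f}(\chi)$ is the character-twisted period of a test function $f$ on the ambient symmetric space. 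By the Waldspurger/Popa formula, $|\hat{f}(\chi)|^{2}$ is essentially a central $L$-value $L(1/2,\pi\otimes\chi)$, reducing the problem to non-trivial bounds for short averages of such $L$-values over characters of quotients of the class group.

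For $|\Sigma|$ close to $|\Pic(\cO_{-D})|$, only $O(|\Pic(\cO_{-D})|/|\Sigma|)=D^{o(1)}$ characters are involved, and Michel--Venkatesh subconvexity suffices in principle. The obstacle kicks in as $|\Sigma|$ drops below $D^{1/2-\delta}$, where the number of characters exceeds any current subconvexity saving. One would hope to exploit the sparsity of small subgroups of class groups via second-moment estimates, but these techniques seem to plateau well above $D^{\rho}$; pushing further demands something like a multiplicative large sieve for class-group characters that is genuinely beyond current technology.

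As a second angle, one can try to mimic the ergodic-theoretic approach surveyed here: if $D$ admits two distinct split primes $p_{1},p_{2}$ whose Frobenii lie in $\Sigma$, then the orbit $t\Sigma.v$ lifts to adelic toral packets invariant under a $\bZ^{2}$-action of class $\cA'$, and Theorem~\ref{thm:mainThm} together with the analysis sketched for Example~\ref{ex:1in3} classifies all possible weak-$*$ limits. The main obstacle here is that for a \emph{generic} small subgroup $\Sigma$ of size $D^{\rho}$ there is no reason that $\Sigma$ should contain the Frobenius at even one fixed prime, and imposing this inflicts a congruence constraint on $D$; requiring it at two primes amounts to conditioning $\Sigma$ to be of a very special arithmetic form.

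The hard part is therefore to handle subgroups of size $D^{\rho}$ which are not defined arithmetically at any fixed prime. My expectation is that any serious attack must combine (i) a higher-rank measure rigidity statement that does \emph{not} require the acting torus to split at a fixed finite prime---weakening the class-$\cA'$ hypothesis of Theorem~\ref{thm:mainThm} to an archimedean-only condition---with (ii) an effective input controlling the initial Bowen-ball concentration of $t\Sigma.v$, which itself seems to call on subconvexity. In the absence of either ingredient, the honest forecast is that Conjecture~\ref{conj:equidisSmallCollections} is currently out of reach in its full strength, and progress should be sought first in the range $\rho>1/2-\delta$ under congruence conditions at two auxiliary primes, where the techniques of \textcite{EL_Joinings2019} combine cleanly with existing subconvexity bounds.
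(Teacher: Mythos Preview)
The statement you were asked to prove is explicitly a \emph{conjecture} in the paper, not a theorem. The paper provides no proof; immediately after stating it, the author notes that GRH implies it for $\rho>\tfrac14$, that Harcos--Michel establish it unconditionally only for $\rho>\tfrac12-\epsilon_0$ with a tiny $\epsilon_0$, and that it remains ``a fascinating conjecture from the homogeneous dynamics perspective.'' So there is no ``paper's own proof'' to compare against.

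Your write-up is not a proof attempt either---it is a survey of plausible lines of attack that terminates, correctly, in the assessment that the conjecture is currently out of reach. In that sense you have identified the situation accurately. Two remarks on the substance of your discussion:

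\begin{itemize}
\item Your suggestion to bring Theorem~\ref{thm:mainThm} to bear here is misdirected. The joinings theorem classifies invariant measures on a \emph{product} of homogeneous spaces; the conjecture concerns equidistribution of a \emph{single} small orbit on a single space. The paper never proposes using joinings for Conjecture~\ref{conj:equidisSmallCollections}; it invokes the conjecture only as context for the \emph{individual} equidistribution needed before one can even pose the coupled problems in Example~\ref{ex:PowerInClassGroup}. The auxiliary-prime trick you describe (requiring Frobenius at two fixed primes to lie in $\Sigma$) would at best recover a Linnik-type argument for very special subgroups, not address the general conjecture.
\item Your subconvexity sketch is roughly in line with what is known: the Waldspurger-type period formula plus subconvexity is exactly what underlies the Harcos--Michel range, and you are right that pushing below $\rho=\tfrac12-\epsilon_0$ runs into the barrier that the number of characters to be averaged exceeds any available individual saving.
\end{itemize}

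In short: there is no gap to name because there is no claimed proof, and the paper agrees with your conclusion that the conjecture is open.
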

Note that the generalized Riemann conjecture implies this conjecture for $\rho>\frac14$. \textcite{MichelHarcos06} prove this conjecture for $\rho\in (\frac{1}{2}-\epsilon_0,\frac{1}{2}]$ for some $\epsilon_0>0$ (e.g.~for $\epsilon_0=\frac{1}{2827}$ in the context of $\cH_{-D}$). See also \textcite{SubCollecDuke} for a much weaker result achieved by homogeneous dynamics techniques. It is a fascinating conjecture from the homogeneous dynamics perspective: until now there are no results that could utilize the algebraic structure of the class group $\Pic(\cO_{-D})$ (as the latter is rather mysterious, this unfortunately makes sense). 

Individual equidistribution plays an important role in the following two examples:

\begin{exem}\label{ex:PowerInClassGroup}
We generalize the above setting. For $1\leq i\leq r$ we let $X_i$ denote a measure space with a uniform measure $m_{X_i}$. The reader should think about $X_i=\bS^2$ or about $X_i=\SL_2(\bZ)\backslash \bH$ as in the examples above. Let $\cG_{D}^{(i)}\subset X_i$ denote (identical or different) torsors  of $\Pic(\cO_{-D})$, for $D$'s in a sequence tending to $\infty$. Although the above is phrased generally, we actually only consider here the problems we discussed in \S \ref{sec:TorusOrbitsArith} (more precisely, problems that fit to the general formulation given in \S \ref{subsec:GenrQA} or small variants of them).
For concreteness, the reader should think about $\cH_{-D}$, or $\bS^2(D)$, or $\CM_{-D}$  from \S \ref{subsec:CMSS}.

Assume that  $\cG_{D}^{(i)}$ equidistribute to $m_{X_i}$. Fix some exponents $k_1,\dots,k_r$ and base points $g_D^{(i)}\in \cG_{D}^{(i)}$, and consider 
  \begin{equation}
      P_D:=\set{\pa{t^{k_1}.g_D^{(1)},\dots,t^{k_r}.g_D^{(r)}}:t\in \Pic(\cO_{-D})}
  \end{equation}
Does $P_D$ equidistribute to $m_{X_1}\otimes\dots\otimes m_{X_r}$? A necessary condition is of course that individual equidistribution holds in each component. 
As we explained before this example, individual equidistribution  relates to the growth  of the subgroup of $k_i$-powers in $\Pic(\cO_{-D})$, or equivalently, to the growth of the $k_i$-torsion subgroup of $\Pic(\cO_{-D})$.  Assuming a folklore conjecture (see \textcite[(1.2)]{EllebergPierceWood} and the references within) about the growth of $k$-torsion in $\Pic(\cO_{-D})$, the growth of the $k$-powers subgroup of $\Pic(\cO_{-D})$ has conjecturally the same exponent as the growth of $\Pic(\cO_{-D})$. Therefore,  conjecturally, individual equidistribution follows from the work of \textcite{MichelHarcos06} mentioned above. This folklore conjecture is holds for $k=2$, so individual equidistribution  in this case is known. 

As the work of \textcite{MichelHarcos06} handles cosets equally well (or just by changing the base point for the torsor), individual equidistribution, under the conjecture about the $k$-torsion, is true also when we consider a shifted version of $P_D$: choose arbitrary  $t_D^{i}\in \Pic(\cO_{-D})$ and consider 
 \begin{equation}
      P_D^{\mathrm{shifted}}:=\set{\pa{(t^{k_1}t_D^{1}).g_D^{(1)},\dots,(t^{k_r}t_D^{r}).g_D^{(r)}}:t\in \Pic(\cO_{-D})}.
  \end{equation}
\end{exem}
As we said above, the $X_i$'s which are of interest to us in this survey, come from the general setting explained in \S \ref{subsec:GenrQA}, and  correspond to homogeneous spaces  which are related to algebraic groups of the form $\bG_i=PB_i^\times$ for quaternion algebras $B_i$ over $\bQ$,  i.e., to  $\bQ$-forms of $\SL_2$. We will explain in \S \ref{subec:PartSolWithMainThm}, that assuming individual equidistribution (for which the shifts $t^i_D$ are irrelevant), we could  essentially show via Theorem \ref{thm:mainThm}, that equidistribution of $P_D^{\mathrm{shifted}}$, for arbitrary shifts, follows in the following cases (as usual, under congruence condition at two primes): 
\begin{enumerate}
    \item All the corresponding $\bG_i$ are non-isogeneous over $\bQ$. 
    \item For any $i\neq j$ with $\bG_i$ and $\bG_j$ being $\bQ$-isogeneous, we have $k_i\neq k_j$.
\end{enumerate}
But when the exponents are equal, we cannot expect equidistribution of arbitrary shifts even if individual equidistribution is known. Indeed, say $r=2$, $X=X_1=X_2$  and $k_1=k_2$. If one chooses $t_D^{1}=t_D^{2}$ for every $D$, the collection $P_D^{\mathrm{shifted}}$ will converge to the diagonal joining, that is, to  $\iota_{*}(m_X)$ where $\iota:X\to X\times X,x\mapsto (x,x)$. Also, if the quotient $t_D^{1}\pa{t_D^{2}}^{-1}$ does not “grow”, one cannot expect equidistribution. In order to explain what we mean by “grow”,  and state a  necessary  condition for equidistribution, we define a “size”-function $N$ on $\Pic(\cO_{-D})$. Recall that $\Pic(\cO_{-D})$ is the ideal class group of $\cO_{-D}$ which, by definition, is the set of classes of invertible fractional ideals in the field of fractions of $\cO_{-D}$ modulo principal fractional ideals. Using the Norm map $\Nr$ on ideals of $\cO_{-D}$, we define for $t\in \Pic(\cO_{-D})$  
 $$
 \mathrm{N}(t)=\min\set{\Nr(\goa):\goa\subset \cO_{-D} \text{ is an invertible ideal in the class of } t}.
 $$
With this size-function $N$, we can formulate a necessary condition for equidistribution of $P_D^{\mathrm{shifted}}$: for any $1\leq i\neq j\leq r$ with $k_i=k_j$ and $\bG_i=\bG_j$,  the shifts must satisfy
 \begin{equation}\label{eq:NormToGod}
 \mathrm{N}\pa{t_D^{i}{t_D^{j}}^{-1}}\stackrel{D\to \infty}{\longrightarrow}\infty.
  \end{equation}
  Otherwise, say, if one of these size-functions is bounded by $M$ (say, for some $i\neq j$), any limit measure will be supported on a union of the graphs of  Hecke-correspondences of level $\leq M$ of a diagonal joining (in the $i,j$-components). 
\textcite{MichelVenkateshICM} conjectured that this is the only obstruction:
\begin{conj}[Mixing conjecture of Michel and Vekatesh]\label{conj:Mixing}
  In the above setting (assuming individual equidistribution or just $k_1=\dots=k_r=1$), if \eqref{eq:NormToGod} is satisfied, then $P_D^{\mathrm{shifted}}$ equidistributes to $m_{X_1}\otimes\dots\otimes m_{X_r}$.
\end{conj}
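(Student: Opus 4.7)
The plan is to follow the blueprint of Example~\ref{ex:1in3} and reduce the conjecture to Theorem~\ref{thm:mainThm} after imposing two auxiliary congruence conditions on $-D$. Writing each $X_i$ as (a finite quotient of) $\bG_i(\bQ)\backslash\bG_i(\bA)$ for $\bG_i=PB_i^\times$ as in Section~\ref{subsec:GenrQA}, the $\ClassGp{-D}$-torsor structure on $\cG_D^{(i)}$ identifies it with a toral packet of the rank-one $\bQ$-anisotropic torus $\bT_D$ attached to $K=\bQ(\sqrt{-D})$ inside $\bG_i$. The collection $P_D^{\mathrm{shifted}}$ thereby lifts to a single adelic orbit, in $\prod_i X_i$, of the diagonally embedded torus $\set{(t^{k_1},\dots,t^{k_r}):t\in \bT_D(\bA)}$ translated by the $r$-tuple of base points and shifts. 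Let $\mu_D$ denote the corresponding normalized orbit measure and $\mu_\infty$ any weak-$*$ subsequential limit of $\mu_D$.

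Fix two auxiliary primes $p\neq q$ and restrict attention to those $D$ for which $K$ splits at both $p$ and $q$. Then the $p$-adic and $q$-adic components of $\bT_D$ each contribute a non-trivial split factor, and a suitable choice of generators yields a class-$\cA'$ homomorphism $\phi\colon\bZ^2\to \prod_i\bG_i(\bQ_S)$ with $S=\set{\infty,p,q}$ preserving $\mu_D$ \emph{uniformly} in $D$. Non-escape of mass (by Linnik-type arguments or the geometric-expansion estimates of \textcite{ELMVDukePeriodic}) ensures that $\mu_\infty$ is a probability measure, and individual equidistribution together with Corollary~\ref{cor:SupportProjectsOnto} guarantees that every ergodic component of $\mu_\infty$ projects to $m_{X_i}$ on each factor. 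Theorem~\ref{thm:mainThm} then identifies each such ergodic component as an algebraic measure supported on an orbit of a $\bQ$-subgroup $\bH<\prod_i\bG_i$ that projects onto every factor. By the Goursat-type consequence explained after Theorem~\ref{thm:mainThm}, non-trivial correlations between the $i$-th and $j$-th factors can arise only when $\bG_i$ and $\bG_j$ are $\bQ$-isogenous, and compatibility of $\bH$ with the torus action further forces $k_i=k_j$; hence the only obstruction to the product measure is limit mass on a Hecke correspondence of some bounded level $M$ between such pairs.

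Such a Hecke-correspondence contribution is ruled out precisely by the hypothesis \eqref{eq:NormToGod}: mass concentrating on a level-$M$ correspondence between the $i$-th and $j$-th factors forces $\mathrm{N}\pa{t_D^{i}{t_D^{j}}^{-1}}\leq M$ along a subsequence, contradicting $\mathrm{N}\pa{t_D^{i}{t_D^{j}}^{-1}}\to\infty$. Thus $\bH=\prod_i\bG_i$, so $\mu_\infty$ is the full product Haar measure $m_{X_1}\otimes\dots\otimes m_{X_r}$, establishing the desired equidistribution of $P_D^{\mathrm{shifted}}$.

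The hard part will be removing the two auxiliary congruence conditions, since Theorem~\ref{thm:mainThm} requires $d\geq 2$ and each split prime of $K$ contributes only one independent diagonalizable direction to the class-$\cA'$ action on $\prod_i\bG_i(\bQ_S)$. Dropping even one of the two conditions reduces us to the genuine rank-one measure rigidity problem for diagonalizable actions, for which no analogue of Theorem~\ref{thm:mainThm} is currently available; bridging this gap would seem to demand either a rank-one measure-classification breakthrough or a subconvexity input substantially stronger than~\textcite{MichelHarcos06}.
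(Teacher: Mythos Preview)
The statement you are proving is a \emph{conjecture}: the paper does not prove it, and explicitly records that it has no complete unconditional solution. So the relevant question is not whether your argument matches the paper's proof, but whether it actually closes the gap.

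It does not, and the gap is in the third paragraph. You assert that mass of $\mu_\infty$ on a level-$M$ Hecke correspondence between factors $i$ and $j$ forces $\mathrm{N}\bigl(t_D^{i}(t_D^{j})^{-1}\bigr)\leq M$ along a subsequence. This is the converse of what the paper actually establishes: the paper shows that \emph{if} the norm stays bounded by $M$, \emph{then} the limit is supported on Hecke graphs of level $\leq M$. The reverse implication --- that unbounded norm prevents any positive proportion of diagonal or Hecke-type ergodic components in the limit --- is precisely the content of the conjecture and is not a formal consequence of Theorem~\ref{thm:mainThm}. Theorem~\ref{thm:mainThm} only tells you that each ergodic component of $\mu_\infty$ is either the product measure or a diagonal/Hecke joining; it gives no mechanism to bound the total mass of the diagonal components. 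The paper makes this explicit in Case~III of \S\ref{subec:PartSolWithMainThm}: this is ``the only application of Theorem~\ref{thm:mainThm} so far where Theorem~\ref{thm:mainThm} does not rule out diagonal joinings, so other tools must be exploited.'' Khayutin's contribution is exactly to supply those other tools --- a cross-correlation refinement of Corollary~\ref{cor:1plusEpsilon}, combined with substantial input from analytic number theory and geometric invariant theory --- and even then he needs the additional hypothesis that the Dedekind $\zeta$-functions of $\bQ(\sqrt{-D})$ have no exceptional Landau--Siegel zero. Your argument skips this entire step.

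There are also smaller inaccuracies worth flagging. The class-$\cA'$ map $\phi$ does not preserve $\mu_D$ ``uniformly in $D$'': the tori $\bT_D$ vary with $D$, and one obtains invariance of the \emph{limit} $\mu_\infty$ under a Chabauty limit of the conjugated split tori, as described around~\eqref{eq:TorSeqGoingToGod}. And Corollary~\ref{cor:SupportProjectsOnto} is a statement about supports of leafwise measures of coarse Lyapunov subgroups, not about projections of ergodic components; the fact that ergodic components project to $m_{X_i}$ comes from individual equidistribution plus ergodicity of the torus action on each factor, not from that corollary.
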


In a celebrated work, \textcite{khayutin2019joint}  essentially proves this conjecture, under the usual assumption of congruence conditions on $D$ at two primes, and some other (arguably) mild assumptions. More precisely, Khayutin considers the case of the torsors $\cH_{-D}$ with $r=2$ and $k_1=k_2=1$ and proves that under the usual congruence conditions at two arbitrary primes (specifically, $-D$ must be  a square modulo these two arbitrary primes) and under the assumption that the Dedekind $\zeta$-function of the fields $\bQ(\sqrt{-D})$
have no exceptional Landau--Siegel zeroes, equidistribution holds. His methods should work equally well for the general case we stated here (assuming individual equidistribution holds).
This is the only application of Theorem \ref{thm:mainThm} so far, where Theorem \ref{thm:mainThm} does not rule out diagonal joinings, so other tools must be exploited. We will explain how Theorem \ref{thm:mainThm} was used by Khayutin in  \S  \ref{subec:PartSolWithMainThm}.

\begin{exem}\label{ex:RedCM}
  We return to the problem we posed in the end of \S \ref{subsec:CMSS}: is the map $(\red_p,\red_q)$ surjective for $\av{D}$  large enough? Can  this be generalized to any finite number of primes?  This arithmetic question is implicitly a joint distribution problem, and fit exactly in the setting we discuss above. It turns out that it can  answered positively under congruence conditions at two distinct primes with Theorem \ref{thm:mainThm}. See \S \ref{subec:PartSolWithMainThm} below for more details.
\end{exem}

\begin{exem}\label{exa:2in4}
Going back to Example \ref{ex:1in3}, one can consider analogues of it in higher dimensions. Consider $\bQ^n$ and fix $0<k<n$. Let $\cL_{D}$ denote the set of $k$-dimensional subspaces in $\bQ^n$ with discriminant $D$. By the discriminant of a subspace $L$ we mean the square of the  covolume of the lattice $\Lambda_L:=\bZ^n\cap L$ inside $L$. One can consider the joint equidistribution of the following three objects: the orientation of $L$, viewed via the corresponding point in the Grassmannian of $k$-dimensional subspaces of $\bQ^n$, the $k$-dimensional lattice $[\Lambda_L]$, and the $n-k$ dimensional orthogonal lattice $[\Lambda_{L^\perp}]=[L^\perp\cap\bZ^n]$  (both considered up-to rotation). This leads to a similar construction of adelic orbits and therefore to interesting joinings, but in the case where $k$ or $n-k$ are greater than~$2$, such joinings will be invariant under a group containing unipotent elements. This enables the use of tools from unipotent dynamics to classify the possible joinings. 

The remaining case, where $k=2,n=4$, turns out to be very much related to the classical problems we considered above: first, for $L\in \cL_{D}$ the lattice in $[\Lambda_L]$ and the orthogonal lattice $[\Lambda_{L^\perp}]$ are both two-dimensional lattices and as such (essentially, up-to primitivity issues) are points in $\cH_{-D}$ (e.g.~by considering the binary quadratic forms $(x_1^2+x_2^2+x_3^2+x_4^2)|_{\Lambda_L}$ and $(x_1^2+x_2^2+x_3^2+x_4^2)|_{\Lambda_{L^\perp}}$). Second, the local isomorphism between $\SO_4$ and $\SO_3\times\SO_3$ implies that elements $L\in \cL_D$ are essentially parameterized by $\bS^2(D)\times \bS^2(D)$, say by $\pa{v_1^L,v_2^L}$. Hoping for a miracle, one can also consider the orthogonal lattices $[\Lambda_{v_1^L}]:=[\pa{v_1^L}^\perp\cap \bZ^3]$ and $[\Lambda_{v_2^L}]:=[\pa{v_2^L}^\perp\cap \bZ^3]$ and ask about the joint distribution of 
\begin{equation}\label{eq:Jdfor2in4}
    J_D=\set{\pa{v_1^L,v_2^L,[\Lambda_{L}],[\Lambda_{L^\perp}], [\Lambda_{v_1^L}],[\Lambda_{v_2^L}] }:L\in \cL_D}.
\end{equation}
Does $J_D$ equidistribute to $\pa{m_{\bS^2}}^{\otimes 2}\otimes \pa{m_{\SL_2(\bZ)\backslash\bH}}^{\otimes 4}$? It turns out, that this geometric construction corresponds to a natural arithmetic problem, of the form we discussed above:  with the notation from Example \ref{ex:PowerInClassGroup}, consider  the $\Pic(\cO_{-D})$-torsors 
$$
X_1=X_2=\bS^2,\, \cG_D^{(1)}=\cG_D^{(2)}=\tfrac{1}{\sqrt{D}}\bS^2(D),\, X_i= \SL_2(\bZ)\backslash\bH,\, \cG_D^{(i)}=\cH_D,\,i=3,4,5,6.
$$
Then, roughly speaking, and  after choosing correctly base points $g_D^{(i)}\in \cG_D^{(i)}$, the set $J_D$ in \eqref{eq:Jdfor2in4} corresponds to 
\begin{equation}\label{eq:arithReform2in4}
\set{\pa{t.g_D^{(1)},s.g_D^{(2)},\pa{ts}.g_D^{(3)},\pa{ts^{-1}}.g_D^{(4)},\pa{t^2}.g_D^{(5)},\pa{s^2}.g_D^{(6)}}:(t,s)\in \pa{\Pic(\cO_{-D})}^2}.    
\end{equation}
This implies that understanding the distribution of this set, as we will explain in \S \ref{subec:PartSolWithMainThm}, is closely related to classifying joinings in  $S$-adic analogues of examples \ref{exa:SL2Squared}\eqref{exa:SL245Rotation} and  \ref{exa:SL2Squared}\eqref{exa:SL2DifferentSpeeds}. We refer the reader to \textcite[\S 8]{AEW21} for a further discussion of arithmetic distribution problems in the spirit of \eqref{eq:arithReform2in4}.
\end{exem}

Generally, and in particular in homogeneous dynamics, one expects equidistribution to hold, unless there is a “visible” obstacle. In particular, we expect the Mixing conjecture (Conjecture \ref{conj:Mixing}) to hold, the equidistribution of $J_D$ from Example \ref{ex:1in3} (or equivalently of the set in \eqref{eq:ArithStrOrtoLattc}) to hold, the equidistribution of $J_D$ from Example \ref{exa:2in4} to hold, and after reformulating it as an equidistribution problem, we expect the reduction maps from Example \ref{ex:RedCM} to “equidistribute”. As we said above, none of these question admits a complete, unconditional solution.

\subsubsection{Partial results without using Theorem \ref{thm:mainThm}}\label{subsec:WithoutConjCond}
Direct generalization of the methods used in the various analytic proofs of Duke's Theorem can yield averaged results (that is, the equidistribution of the union over $\set{D:D\leq M}$ with $M\to\infty$), see the appendix by Ruixiang Zhang in the ArXiv version of \textcite{aka2015integerArXiv}. This is related to the work of \textcite{Maass56}, who considered special cases of Example~\ref{exa:2in4} also on the average. Similar averaged results of special cases of Example~\ref{exa:2in4} were proven by \textcite{Schmidt} and recently sharpened by \textcite{horesh2020equidistribution}.

\textcite{blomer2020simultaneous} found recently an analytic approach to study joined equidistribution problems involving only two individual equidistribution problems as above,  for forms of $\SL_2$ when $r=2$. They can prove the conjectured equidistribution under the assumption of the generalized Riemann hypothesis, but without the need for auxiliary congruence conditions. This gives a  conditional solution to many of the problems considered above, and  in particular to the problems mentioned in Example~\ref{ex:1in3}, and in~\ref{ex:RedCM} for the reduction with respect to two distinct primes.  

\subsubsection{Results toward equidistibution using Theorem \ref{thm:mainThm}}\label{subec:PartSolWithMainThm}

We begin by explaining, in the spirit of \S \ref{subsec:GenrQA}, how the problems mentioned in \S \ref{subsec:ManyJointExamples} correspond to “joined” adelic orbits in adelic homogeneous spaces.
We then explain the need for auxiliary congruence conditions and how they give rise to a class-$\cA'$ homomorphism as in Theorem~\ref{thm:mainThm}, enabling the use of this theorem. Finally, we describe how Theorem~\ref{thm:mainThm} was used so far to obtain partial solutions of the problems we mentioned in \S \ref{subsec:ManyJointExamples}.

\subsubsection*{Coupling toral packets together}
Going back to the setting of \S \ref{subsec:GenrQA}, we consider $r$  quaternion algebras $B_1,\dots,B_r$ over $\bQ$ and let $\bG_i=PB_i^\times$ be the corresponding algebraic groups. 
Assume that there exists a rank one $\bQ$-anisotropic algebraic torus $\bT_n$ such that for any $1\leq i\leq r$, there is an embedding $\iota_{i,n}\colon\bT_n\to \bG_i$ (e.g., via finding corresponding vectors in $B_i^0$, see \S \ref{subsec:GenrQA}) giving rise to  a toral packet  
\begin{equation}
    \bG_i(\bQ)\iota_{i,n}(\bT_n(\bA))g_{i,n}\subset \bG_i(\bQ)\backslash\bG_i(\bA),\quad g_{i,n}\in \bG_i(\bA)
\end{equation}
with discriminant $D_n$  (as in \eqref{eq:toralPacket}). Assume that $D_n\stackrel{n\to\infty}{\longrightarrow}\infty$ or $D_n\stackrel{n\to\infty}{\longrightarrow}-\infty$ and that these toral orbits converge to a measure $\mu_i$ on $\bG_i(\bQ)\backslash\bG_i(\bA)$ when $n\to\infty$. For example, we know by Duke's Theorem (as stated in \S \ref{subsec:GenrQA}) that each $\mu_i$ must be a $\bG_i(\bA)^+$-invariant measure. 
Now, let $\bG=\prod_{i=1}^r \bG_i$,  define $\iota_n:\bT_n\to \bG$  as $\iota_n=(i_{1,n},\dots,\iota_{r,n})$, and consider the following joined orbit:
\begin{equation}
    \cJ_n:=\bG(\bQ)\iota_{n}(\bT_n(\bA))\underbrace{(g_{1,n},\dots,g_{r,n})}_{g_n:=}\subset \bG(\bQ)\backslash\bG(\bA).
\end{equation}
As in \eqref{eq:toralPacket}, we equip $\cJ_n$ with $m_{\cJ_n}$, the pushforward of the uniform probability measure on $\bT_n(\bQ)\backslash\bT_n(\bA)$ under $t\mapsto \iota_n(t)g_n$.
It follows that any weak-* limit $\eta$  of $\set{m_{\cJ_n}}$ is a probability measure projecting via the natural projections $\pi_i\colon\bG\to\bG_i$ to the measures~$\mu_i$, $i=1,\dots r$. 

To be able to use homogeneous dynamics methods, we must ask ourselves, under which elements of $\bG(\bA)$ is $\eta$ invariant. Generally, the measure $\eta$ is invariant under limits in the Chabauty-topology of the subgroups $g_n^{-1}\iota_{n}(\bT_n(\bA))g_n$. We already explained in the paragraph after \eqref{eq:goingEverywhere}, that (most) of the methods in homogeneous dynamics, essentially work only when we restrict to a \emph{fixed} place (or finitely many fixed places). We also gave intuition there to why having “large” elements in the toral packet (or equivalently, having a $\bQ_p$-\emph{split} torus $\bT_n(\bQ_p)$) is needed in order to expect equidistribution and in order to use dynamical methods. Going back to the situation at hand, we see, that in order to obtain elements at the place $p$,  leaving the measure $\eta$ invariant, we must look at limits of subgroups of the form
\begin{equation}\label{eq:TorSeqGoingToGod}
g_{n,p}^{-1}\iota_{n,p}(\bT_n(\bQ_p))g_{n,p}   
\end{equation}
where $g_{n,p}$ denotes the $p$-adic component of $g_{n}$, and similarly for $\iota_{n,p}$.  Our only chance to pinpoint a non-trivial element in the limit of these $\bQ_p$-tori is to assume that all of them split over~$\bQ_p$. Asking for this splitting, is equivalent to imposing a congruence condition on the corresponding discriminant~$D_n$ modulo~$p$ (normally, depending slightly on the exact definition of discriminant,  $-D_n$ should be congruent to a square modulo~$p$). Then, in the limit we have two options: either the groups in \eqref{eq:TorSeqGoingToGod} “degenerate” to a one-parameter unipotent subgroup, in which case, we can use methods from unipotent dynamics mentioned in \S \ref{subsec:settings} (compare also to the work of \cite{EskinMosezShah96}),  or we can find a $\bQ_p$-split torus in the limit (for the problems considered above, the latter is more plausible: if the $\bQ_p$-split tori~$\bT_n$ are the stabilizers of integral vectors, say in $\bZ^3$, their Chabauty limit will contain the stabilizer of a vector in $\bZ^3_p$). Therefore, fixing two distinct places~$p$ and~$q$, and assuming that we don't get any unipotent invariance in the limit, we get invariance under a split torus at~$p$ and at~$q$. That is, we can find elements $a_p$ and $a_q$ in the corresponding tori,  such that the homomorphism $\phi\colon\bZ^2\to\bG(\bA)$ mapping two generators of $\bZ^2$ to the elements of $\bG(\bA)$ corresponding to $a_p$ and $a_q$, will be of class-$\cA'$ and the limit measure $\eta$ will be invariant under its image. In summary, under the assumption of congruence conditions at two arbitrary fixed primes, $\eta$~is a joining for which Theorem~\ref{thm:mainThm} applies. 

Under these two congruence conditions, Theorem \ref{thm:mainThm} reduces the classification of each of the \emph{ergodic component} of $\eta$ to an algebraic problem concerning the algebraic nature of $\bG_i$ and $\iota_{i,n}$. We remark, that the latter is true, at least up to the so-called character spectrum, that is up-to the difference between $\bG(\bA)^+$-invariance and $\bG(\bA)$-invariance. Indeed, note that the measures $\mu_i$ above are known to be in general only $\bG_i(\bA)^+$-invariant. Any input we will get from Theorem \ref{thm:mainThm}, will give us information on $\eta$, up-to the so-called character-spectrum $\bG(\bA)/\bG(\bA)^+$. We will avoid giving more details here about this issue (the interested reader may consult \textcite[\S 9.1]{aka2020simultaneous} or \cite[\S 3.3]{khayutin2019joint}).  

We discuss now each of the possible algebraic situations that may occur separately, explain what Theorem~\ref{thm:mainThm} can tell us in each situation, and say which of the problems we considered in~\ref{subsec:ArithGeoJoinings} fit in each situation. Before starting, note that we formulated Theorem~\ref{thm:mainThm} in an $S$-adic setting (see \cite[Therom 1.8]{EL_Joinings2019} for an adelic statement), so let's restrict ourselves for simplicity to an $S$-adic situation where the primes~$p$ and~$q$ above are contained in~$S$.

\subsubsection*{Case I: when the groups are non-isogenous over $\bQ$}

When the groups $\bG_i$ are non-isogenous over $\bQ$, Theorem \ref{thm:mainThm} tells us that each ergodic component of $\eta$ must be invariant under a finite-index subgroup of $\bG(\bQ_S)$ where $\bG=\prod_i\bG_i$. Indeed, any stricly-contained algebraic $\bQ$-subgroup of $\bG$ projecting onto $\bG_i$, must give rise to a $\bQ$-isogeny between two factors. In other words, essentially (depending on small details as the character spectrum, for example), each ergodic component of $\eta$ must be the trivial joining, and therefore also $\eta$ itself.  Theorem \ref{thm:mainThm} has been applied so far in such situations in the following cases:
\begin{itemize}
    \item The desired equidistribution in Example \ref{ex:1in3} follows under congruence conditions at two primes, since the non-$\bQ$-isogenous groups $\SO_3$ and $\SL_2$ are considered there. See \textcite{AES2016} for more details.
    \item The equidistribution of $P_D$ or $P_D^{\mathrm{shifted}}$ from Example \ref{ex:PowerInClassGroup} follows, under the following three assumptions: congruence conditions at two primes, that  individual equidistribution is assumed/known, and that the corresponding $\bG_i$ are pairwise non-$\bQ$-isogenous. One can find a more detailed treatment in the recent survey of \textcite[\S 4]{lindenstrauss2021Survey}.
    \item Under congruence conditions at two primes~$p$ and~$q$, the desired surjectivity of the reduction map we described in Example~\ref{ex:RedCM} follows from Theorem~\ref{thm:mainThm}, with respect to finitely many primes $p_1,\dots,p_r$, different from~$p$ and~$q$, as in Example~\ref{ex:RedCM}  .  Indeed, the $\bG_i$'s here correspond to the non-$\bQ$-ismorphic quaternion algebras $B_{\infty,p_1},\dots,B_{\infty,p_r}$. See \textcite{aka2020simultaneous} for more details. This example shows how the fact that ergodic theory handles arbitrary products relatively well, enabling to achieve  strong arithmetic applications, related to arbitrary products. As far as the author knows, the  surjectivity of the above reduction maps for $r>1$ was not known before for a \emph{single} discriminant.
    \item For the six-fold product considered in \eqref{eq:Jdfor2in4}, the algebraic groups corresponding to the first two factors are identical and non-$\bQ$-isogeneous to ones corresponding to the last four identical factors. So, currently, without considering the corresponding embedding $\iota_{i,n}$,  as in the general notation above, we can only conclude from Theorem \ref{thm:mainThm} the disjointness of the first two factors from the last four factors. In this case, the structure of $\iota_{i,n}$ saves the day; we will discuss it further below.
\end{itemize}

\subsubsection*{Case II: when the groups are isogenous over $\bQ$, with a non-compatible torus action.}

To simplify notation, let $\nu$ denote an ergodic component of the limit measure $\eta$ that we are trying to classify. In all the problems we consider here, let's assume that we have congruence conditions at $p$ and $q$ giving rise to $\phi\colon\bZ^2\to \bG(\bQ_S)$, a class-$\cA'$ homomorphism, with $\nu$ being invariant under $\phi(\bZ^2)$.  Assuming that the groups are isogenous,  $\nu$ might be supported on a graph of an isogeny, which implies that the invariance group of $\nu$ will be supported on a graph of an isogeny too. This invariance group must also contain  $\phi(\bZ^2)$. But, the exact structure of $\phi$ depends on $\iota_{i,n}$. In some cases, this rules out the possibility of being supported on a graph of isogeny, implying that $\nu$ must be (essentially) the trivial joining and therefore also $\eta$. This is the case in the following examples:
\begin{itemize}
    \item Going back to the six-fold product considered in \eqref{eq:Jdfor2in4}, the structure of $\phi$ in the third and fourth factor is the exact $S$-adic analogue of Example \ref{exa:SL2Squared}.\eqref{exa:SL245Rotation}, the “$45^\circ$-rotation” example (strictly speaking, as \eqref{eq:arithReform2in4} hints at, we actually consider here a torsor for $\pa{\Pic(\cO_d)}^2$ which, together with the two congruence conditions, results in having an embedding of $\bZ^4$, rather than $\bZ^2$). This “$45^\circ$-rotation” structure of $\phi$ enables to rule out rather simply\footnote{A similar argument, as in the “two ingredients joinings theorem”- Corollary \ref{cor:differntRootDisj}, would suffice here.} any joinings supported on graph of $\bQ$-isogenies, as we explained in Example \ref{ex:Exp45RotationDisj}. So $\nu$, and  therefore also $\eta$ must be (essentially) the trivial joining. See \textcite{AEW21}  for more details.
    \item The equidistribution of $P_D$ or $P_D^{\mathrm{shifted}}$ from Example \ref{ex:PowerInClassGroup} when $\bG_i$ are isogenous to each other is trickier. For concreteness, consider $r=2$ with $\bG_1=\bG_2$ and with exponents $k_1=1$ and $k_2=2$. In this case individual equidistribution is known, and examining the corresponding to $\iota_{1,n}$ and $\iota_{2,n}$, the resulting class-$\cA'$ homomorphism $\phi$ is the exact $S$-adic analogue of Example \ref{exa:SL2Squared}.\eqref{exa:SL2DifferentSpeeds}, the “different speeds” example. Therefore, also in this case, the only possibility for $\nu$ and therefore for $\eta$ is (essentially) the trivial joining. See \textcite[\S 4]{lindenstrauss2021Survey} for more details.
\end{itemize}

\subsubsection*{Case III: when the groups are isogenous over $\bQ$, with a compatible torus action.} 

The above-mentioned results were “easy” applications of Theorem \ref{thm:mainThm}. The Mixing Conjecture (Conjecture \ref{conj:Mixing}) does not fall in the above cases: for concreteness consider again $P_D^{\mathrm{shifted}}$ from Example \ref{ex:PowerInClassGroup} with $r=2$, $\bG_1=\bG_2=\SL_2$ and with exponents $k_1=1$ and $k_2=2$, and $X:=X_1=X_2=\SL_2(\bZ)\backslash\bH$, which is a case treated in the work of \textcite{khayutin2019joint}. Let $m_X$ denote the uniform measure on $X$. As we explained above, the structure of $\phi$ in this case, does not rule out joinings which are supported on a diagonal embedding of $\SL_2$ into $\SL_2\times\SL_2$. So, as far as we know, assuming congruence conditions at two primes and applying Theorem \ref{thm:mainThm}, we only get that $\nu$,  an ergodic component of a limit measure $\eta$, can either be  the trivial joining or a diagonal joining. This is nonetheless very strong information! It implies the following corollary, which is utilize and  generalized by Khayutin in his work:
\begin{coro}
  Let $U_n\subset X$ be positive measure sets with $m_X(U_n)\stackrel{n\to\infty}{\longrightarrow}  0$  (e.g., a sequence of open neighbourhood of a point/of the cusp). If we can show that 
  \begin{equation}\label{eq:wishfulBound}
  \eta(U_n\times U_n)\ll m_X(U_n)^{1+\epsilon_0}    
  \end{equation}
 for some $\epsilon_0>0$, then almost every ergodic component of $\eta$ is the trivial joining, and so is $\eta$. 
\end{coro}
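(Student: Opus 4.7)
The plan is to use the ergodic decomposition together with Theorem~\ref{thm:mainThm} to reduce the corollary to a local lower bound for the mass that a putative non-trivial ergodic component assigns to $U_n\times U_n$. I would begin by decomposing $\eta=\int_\Xi \nu_\xi\,d\lambda(\xi)$ into its $\phi(\bZ^2)$-ergodic components. Each $\nu_\xi$ is again a joining (because both projections are the push-forward of $\eta$, already equal to $m_X$, and projections commute with ergodic decomposition up to a null set), so Theorem~\ref{thm:mainThm} applies and forces each $\nu_\xi$ to be algebraic. In the setting under discussion the only algebraic joinings are the trivial joining $m_X\otimes m_X$ and the ``diagonal-type'' joinings, i.e.\ Haar measures on closed orbits of subgroups of $G_1\times G_2$ that are graphs (up to a Hecke element) of an algebraic isomorphism $\SL_2\to\SL_2$. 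Write $\Xi_0\subset\Xi$ for the set of indices giving non-trivial components, so that
$$\eta(U_n\times U_n)=\lambda(\Xi\setminus\Xi_0)\,m_X(U_n)^2+\int_{\Xi_0}\nu_\xi(U_n\times U_n)\,d\lambda(\xi).$$

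The heart of the argument is to show that the integral over $\Xi_0$ is bounded below by a constant multiple of $m_X(U_n)$. This is where one uses that each $\nu_\xi$ with $\xi\in\Xi_0$ is supported on the graph of a Hecke correspondence $T_\xi$, so $\nu_\xi(U_n\times U_n)$ equals (up to normalisation by the degree of $T_\xi$) the $m_X$-mass of the set of $x\in U_n$ whose image $T_\xi(x)$ meets $U_n$. For sequences $U_n$ that are asymptotically preserved by all Hecke correspondences of bounded level---the guiding example is a shrinking neighbourhood of the cusp, or more generally any ``canonical'' sequence adapted to the action---one obtains $\nu_\xi(U_n\times U_n)\gtrsim_\xi m_X(U_n)$, uniformly enough in $\xi$ after possibly restricting to a positive-$\lambda$-measure subset of $\Xi_0$. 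Combining this with the hypothesis yields
$$c\,m_X(U_n)\ \lesssim\ \eta(U_n\times U_n)\ \ll\ m_X(U_n)^{1+\epsilon_0}=o(m_X(U_n)),$$
a contradiction, so $\lambda(\Xi_0)=0$ and $\eta$ agrees with the trivial joining $\lambda$-almost everywhere, hence everywhere after integration.

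The main obstacle is the uniformity step: Hecke correspondences come in families of unbounded level, and as the level grows the graph becomes more diffuse in $X\times X$, so $\nu_\xi(U_n\times U_n)$ can be much smaller than $m_X(U_n)$ unless the sequence $U_n$ is chosen to see all levels simultaneously. Providing such a sequence---and proving the uniform lower bound against it---is precisely what requires genuine input beyond Theorem~\ref{thm:mainThm} (in \textcite{khayutin2019joint} this is where the analysis of the cusp via Margulis functions and the no-Siegel-zero hypothesis enter). For the statement of the corollary as phrased, the bound is assumed as input, and once the Key Step above is granted for the given $U_n$, the remainder of the proof is just the ergodic-decomposition bookkeeping sketched here.
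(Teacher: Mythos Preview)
Your approach matches the paper's: take the ergodic decomposition, apply Theorem~\ref{thm:mainThm} to see that each component is either trivial or diagonal, and then argue that a positive proportion of diagonal components would force $\eta(U_n\times U_n)$ to be at least of order $m_X(U_n)$, contradicting the hypothesis. The paper's proof is exactly these few lines and nothing more.

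Where you diverge is in how much weight you place on the lower bound $\nu_\xi(U_n\times U_n)\gtrsim m_X(U_n)$ for diagonal $\nu_\xi$. The paper simply \emph{asserts} that a diagonal joining satisfies $\nu(U_n\times U_n)\asymp m_X(U_n)$, with no discussion of uniformity in the Hecke level of the graph supporting $\nu$. Your caution is legitimate: for a generic sequence $U_n$ and a diagonal joining supported on the graph of a high-level correspondence, there is no a~priori reason $\nu(U_n\times U_n)$ should be comparable to $m_X(U_n)$. But this corollary is offered in the paper as a motivating heuristic---note the parenthetical ``e.g., a sequence of open neighbourhoods of a point/of the cusp''---and the $\asymp$ is to be read in that informal spirit. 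The paper itself acknowledges your point in the very next paragraph: Khayutin's real argument replaces this corollary by a sharper cross-correlation estimate precisely because the uniformity issue you raise is genuine. So your proof is correct and in fact more honest about the gap; you have simply worked harder than the survey intended you to at this spot.
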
\label{cor:1plusEpsilon}
\begin{proof}
We explained above that Theorem \ref{thm:mainThm} implies that almost every ergodic component $\nu$ of $\eta$ is either trivial or diagonal. If $\nu$ is the trivial joining, then $\nu(U_n\times U_n)\asymp m_X(U_n)^2$, and if $\nu$ is a diagonal joining, then $\nu(U_n\times U_n)\asymp m_X(U_n)$. Assume, for contradiction, that  a positive proportion of the ergodic components are diagonal joinings. Then it follows that  $\eta(U_n\times U_n)\gg m_X(U_n)^{1+o(1)}$ contradicting \eqref{eq:wishfulBound}.
\end{proof}
Said differently, we believe and wish to show that $\eta(U_n\times U_n)$  decays with exponent $2$. Using the joining Theorem, it is enough to establish a decay with any exponent $>1$.

We explained above that in the Mixing conjecture (Conjecture \ref{conj:Mixing}) diagonal joinings correspond to measures supported on Hecke-correspondences.  
Khayutin strengthens Corollary \ref{cor:1plusEpsilon} to consider what he calls the cross-correlation between two measures, allowing to check a condition similar to \eqref{eq:wishfulBound} for measures $\nu_D$ corresponding to $P_D^{\mathrm{shifted}}$, correlated against possible measures which are supported on Hecke-correspondences. He then uses many tools (from analytic number theory and geometric invariant theory) to analyse these cross-correlations and obtain a bound similar to \eqref{eq:wishfulBound}.    

\subsection{An application of Theorem \ref{thm:mainThm} for simple groups with high rank}\label{subsec:HighRankAppli}
There are no published applications of Theorem \ref{thm:mainThm} which are not related to forms of $\SL_2$. There might be several reasons for that: as we've seen above, many classical arithmetic objects are related to toral packets of forms of $\SL_2$. The generalization of these problems to higher dimensions normally involve invariance under a group containing unipotent elements, making Theorem \ref{thm:mainThm} either inapplicable or superfluous. 

We end this survey by sketching a new application of Theorem \ref{thm:mainThm} with $\bG_1$ and $\bG_2$ being two distinct $\bQ$-forms of $\PGL_3$. 

Let $\bG_1=\PGL_3$ and set $\Gamma_1=\PGL_3(\bZ)$. Let $\bG_2$ be the $\bQ$-algebraic group associated to the group of invertible elements of a degree three (i.e., nine-dimensional) division algebra $D$, modulo its center. Assume that $D$ splits over $\bR$, that is $D\otimes \bR=M_{3\times 3}(\bR)$. Let $\cO$ be a full order contained in $D(\bQ)$; one can show that its image in $\bG_2(\bR)$ is an arithmetic, cocompact lattice, which we denote by $\Gamma_2$. Set $G_i=\bG_i(\bR)$, $X_i=\Gamma_i\backslash G_i$, $X=X_1\times X_2$, $\bG=\bG_1\times\bG_2$, and $\Gamma=\Gamma_1\times\Gamma_2$. Let $m_{X_i}$ denote the Haar probability measure on $X_i$.

\begin{theo}\label{Thm:HighRankAppli} Let $K_n$ be a sequence of totally real cubic number fields, with $\mathrm{disc}(K_n)\to\infty$, and such that for $i=1,2$ there exist embeddings $\iota_{i,n}$ of $\bT_n:=\Res_{K_n/\bQ}\bG_m/\bG_m$ into $\bG_i$, with $\Gamma_i\iota_{i,n}(\bT_n(\bR))g_i$ for some $g_i\in G_i$ being a periodic orbit, i.e., it supports a  $g_i^{-1}\iota_{i,n}(\bT_n(\bR))g_i$-invariant probability measure $\nu_{i,n}$. We define $\iota_n=(\iota_{1,n},\iota_{2,n})\colon\bT_n\to \bG$ and consider the “joined” real orbit
\begin{equation}\label{eq:joinedSL3Packet}
    \Gamma\iota_n(\bT_n(\bR))(g_1,g_2)
\end{equation}
which is also a periodic orbit supporting a probability measure $\nu_n$. Then, any weak-* limit of $\nu_n$ has full support. More precisely, any weak-* limit of $\nu_n$ is the product of $m_{X_1}$ with a weak-* limit $\mu_2$ of $\nu_{2,n}$, and $m_{X_2}$ appears with positive proportion in the ergodic decomposition of $\mu_2$. 
\end{theo}
\begin{proof}[Sketch of proof]
 We consider first what we know about individual equidistribution. In the first factor, it is a notable result of  \textcite[Theorem 1.4]{ELMVCubic} that $\nu_{1,n}$ converges to $m_{X_1}$. For the second factor, with easier methods, it is shown in \textcite{ELMVDuke}, that $m_{X_2}$ must appear with positive proportion in the ergodic decomposition of any weak-* limit of $\nu_{2,n}$.
 Consider now a weak-* limit of $\nu_n$, and denote it by $\eta$. Then $\eta$ projects to $m_{X_1}$ in the first factor and to a weak-* limit $\eta_2$ of $\nu_{2,n}$ in the second factor. 
 As we explained above, the limit measure $\eta$  is either invariant under a unipotent element (in which case we declare victory), or under a split $\bR$-torus action, that is, $\eta$ is a joining (of $m_{X_1}$ with $\eta_2$)  with a $\bZ^2$-torus action (as the split torus has dimension two). Assume we are in the second case, we let $\mu$ denote one of the ergodic components of $\eta$ with respect to this $\bZ^2$-torus action. Also here we know that $\mu$ projects to $m_{X_1}$ in the first factor and to a weak-* limit $\mu_2$ of $\nu_{2,n}$ in the second factor. With respect to this torus action in the second factor, we start by considering  an ergodic decomposition of $\mu_2$: 
 $$
 \mu_2=\int \mu_2(\xi)d\omega(\xi).
 $$
  Since $m_{X_1}$ is ergodic with respect to the action in the first factor, and the torus action is embedded diagonally in both factors, $\mu$ admits an ergodic decomposition of the form
  $$
  \mu=\int \mu(\xi)d\omega(\xi)
  $$ 
  with $\mu(\xi)$ being a  joining of $m_{X_1}$ and $\mu_2(\xi)$. We can then proceed as follows, according to the type of  $\mu_2(\xi)$:
 \begin{itemize}
     \item If $\mu_2(\xi)= m_{X_2}$, a case occurring with positive proportion, then Theorem \ref{thm:mainThm} can be applied to $\mu(\xi)$ to conclude that $\mu(\xi)=m_{X_1}\times m_{X_2}$, since $\bG_1$ and $\bG_2$ are not isogenous over $\bQ$.
     \item If $\mu_2(\xi)\neq m_{X_2}$ it must have zero entropy (otherwise, by the results of \textcite{EKL06} it will be the Haar measure), and since $(X_1,m_{X_1})$ equipped with the torus action in the first factor is a so-called $K$-system, it must be disjoint to any zero entropy system (see for example a recent survey by \cite[Theorem 1]{delaRue2020}).  
 \end{itemize} 
 Therefore, disjointness follows in both cases and so 
 $$
 \mu=\int \mu(\xi)d\omega(\xi)=\int m_{X_1}\otimes\mu_2(\xi)d\omega(\xi)= m_{X_1}\otimes\pa{\int\mu_2(\xi)d\omega(\xi)}=m_{X_1}\otimes \mu_2,
 $$
 as we wanted to show.
\end{proof}

\printbibliography

\end{document}
